
\documentclass{mod-rmta}

\usepackage{xcolor}
\definecolor{webgreen}{rgb}{0,.5,0}
\definecolor{webbrown}{rgb}{.6,0,0}
\definecolor{RoyalBlue}{cmyk}{1, 0.50, 0, 0}
\usepackage[colorlinks=true, breaklinks=true, urlcolor=webbrown, linkcolor=RoyalBlue, citecolor=webgreen,backref=page]{hyperref}
\usepackage{amssymb,amsmath,amsfonts,amscd,mathrsfs,pifont,upgreek}

\newcommand{\qq}[1]{\qquad \mbox{#1} \qquad}

\newcommand{\BB}[1]{\ensuremath{\mathbb{#1}}}

\newcommand{\N}{\ensuremath{\BB{N}}}

\newcommand{\R}{\ensuremath{\BB{R}}}
\newcommand{\Z}{\ensuremath{\BB{Z}}}
\newcommand{\C}{\ensuremath{\BB{C}}}
\newcommand{\T}{\ensuremath{\BB{T}}}
\newcommand{\D}{\ensuremath{\BB{D}}}

\newcommand{\upd}{\ensuremath{\mathrm{d}}}

\newcommand{\la}{\ensuremath{\langle}}
\newcommand{\ra}{\ensuremath{\rangle}}

\newcommand{\transpose}{\ensuremath{\mathsf{T}}}

\newcommand{\G}[1]{\Gamma\left( #1 \right)}

\DeclareMathOperator{\sgn}{sgn}

\DeclareMathOperator{\Pf}{Pf}

\DeclareMathOperator{\im}{im}
\DeclareMathOperator{\re}{re}

\newcommand{\rec}{\ensuremath{\mathrm{rec}}}

\begin{document}

\markboth{C.D. Sinclair and M.L. Yattselev}{The reciprocal Mahler ensembles of random polynomials}

\title{The reciprocal Mahler ensembles of random polynomials}

\author{Christopher D.~Sinclair}
\address{Department of Mathematics, University of Oregon, Eugene OR 97403 \\ 
\email{\href{mailto:csinclai@uoregon.edu}{csinclai@uoregon.edu}} }

\author{Maxim L. Yattselev}

\address{Department of Mathematical Sciences, Indiana University-Purdue University, 402 North Blackford Street, Indianapolis IN 46202\\
\href{mailto:maxyatts@iupui.edu}{\tt maxyatts@iupui.edu} }

\maketitle

\begin{abstract}
We consider the roots of uniformly chosen complex and real reciprocal
polynomials of degree $N$  whose Mahler measure is bounded by a constant.
After a change of variables this reduces to a generalization of
Ginibre's complex and real ensembles of random matrices where the
weight function (on the eigenvalues of the matrices) is replaced by
the exponentiated equilibrium potential of the interval $[-2,2]$ on
the real axis in the complex plane.  In the complex (real) case the random
roots form a determinantal (Pfaffian) point process, and in both cases
the empirical measure on roots converges weakly to the arcsine
distribution supported on $[-2,2]$.  Outside this region the kernels
converge without scaling, implying among other things that there is
a positive expected number of outliers away from $[-2,2]$.  These
kernels, as well as the scaling limits for the kernels in the bulk
$(-2,2)$ and at the endpoints $\{-2,2\}$ are presented.  These kernels
appear to be new, and we compare their behavior with related kernels which arise
from the (non-reciprocal) Mahler measure ensemble of random
polynomials as well as the classical Sine and Bessel kernels.
\end{abstract}

\keywords{Mahler measure, random polynomials, asymmetric random matrix, Pfaffian point process, universality class}

\ccode{Mathematics Subject Classification 2000: 
15B52, 
60B20, 
60G55,  
82B23, 
15A15, 
11C20  
}

\section{Introduction}	

We study two ensembles of random polynomials/matrices related to
Ginibre's real and complex ensembles of random matrices, but with
weight functions which are not derived from `classical' polynomial
potentials, but rather from the equilibrium logarithmic potential of
the interval $[-2,2]$ on the real axis in the complex plane.  This
complements our study of similar ensembles formed from the equilibrium
logarithmic potential of the unit disk \cite{MR2903124, SYreal}.

We introduce the complex ensemble first since it is simpler to
define.  Consider a joint density function of $N$ complex random
variables identified with $\mathbf z \in \C^N$ given by $P_N : \C^N
\rightarrow [0, \infty)$, where
\begin{equation}
\label{eq:0}
P_N(\mathbf z) = \frac{1}{Z_N}\bigg\{\prod_{n=1}^N \phi(z_n)\bigg\}
|\Delta(\mathbf z)|^2 \qq{and} \Delta(\mathbf z) = \prod_{m < n}
(z_n - z_m).
\end{equation}
Here $\phi : \C \rightarrow [0,\infty)$ is the {\em weight function},
and $Z_N$ is a constant necessary to make $P_N$ a probability density.
When $\phi(z) = e^{-|z|^2}$, $P_N$ gives the joint density of
eigenvalues of a matrix chosen randomly from Ginibre's ensemble of $N \times
N$ complex matrices with i.i.d.~complex standard normal entries
\cite{MR0173726}.  When $\phi(z) = \max\{1, |z|\}^{-s}$ for $s =
N + 1$, then $P_N$ defines the joint density of the roots of random
complex polynomials of degree $N$ chosen uniformly from the set of
such polynomials with Mahler measure at most 1 (Mahler measure is a
measure of complexity of polynomials; see
Section~\ref{sec:mahler-measure} below).

The ensembles \eqref{eq:0} can be
put in the context of two-dimensional electrostatics by envisioning
the random variables $z_1, \ldots, z_N$ as a system of repelling
particles confined to the plane, and placed in the presence of an
attractive potential $V: \C \rightarrow [0,\infty)$ which keeps the
particles from fleeing to infinity.  When such a system is placed in
contact with a heat reservoir at a particular temperature, the
location of the particles is random, and the joint density of
particles is given by $P_N$ for $\phi(z) = e^{-V(z)}$.  The connection
between eigenvalues of random matrices and particle systems in 2D
electrostatics is originally attributed to Dyson \cite{Dys62}, and is central in the treatise
\cite{forrester-book}.  From this perspective, it makes sense to
investigate $P_N$ for different naturally-arising confining potentials.

Real ensembles are different in that the roots/eigenvalues of the real polynomials/matrices are either real or come in complex conjugate pairs. Hence, the joint density for such ensembles is not defined on $\C^N$, but rather on
\[
\bigcup_{L + 2M = N} \R^L \times \C^M,
\]
where the union is over all pairs of non-negative integers such that
$L + 2M = N$.  For each such pair, we specify a {\em partial} joint
density $P_{L,M} : \R^L \times \C^M \rightarrow [0,\infty)$ given by
\begin{equation}
\label{eq:3}
P_{L,M}(\mathbf x, \mathbf z) = \frac{1}{Z_N} \bigg\{ \prod_{\ell=1}^L
\phi(x_{\ell}) \bigg\} \bigg\{ \prod_{m=1}^M \phi(z_m)
\phi(\overline{z}_m)\bigg\} |\Delta(\mathbf x \vee \mathbf z \vee
\overline{\mathbf z})|,
\end{equation}
where $\mathbf x \vee \mathbf z \vee \overline{\mathbf z}$ is the
vector formed by joining to $\mathbf x$ all the $z_m$ and their
complex conjugates, and $Z_N$ is the normalization constant given by
\[
Z_N = \sum_{(L,M)} \frac{1}{L! M!} \int_{\R_L} \int_{\C^M}
P_{L,M}(\mathbf x, \mathbf z) \upd \mu_{\R}^L(\mathbf x) \upd
\mu_{\C}^M(\mathbf z).
\]
(Here $\mu_{\R}^L$ and $\mu_{\C}^M$ are Lebesgue measure on $\R^L$
and $\C^M$ respectively).  Note that we may assume that $\phi(z) =
\phi(\overline z)$, since otherwise we could replace $\phi(z)$ with
$\sqrt{\phi(z) \phi(\overline{z})}$, without changing the partial
joint densities.

In this work we focus on the case $\phi(z)=e^{-V(z)}$, where the confining potential $V(z)$ is the scaled logarithmic  equilibrium potential of
the interval $[-2,2]$, see Section~\ref{ssec:pot-theory}. That is,
\begin{equation}
\label{eq1}
V(z) = s \log \frac{|z + \sqrt{z^2 - 4}|}2, \qquad s > N,
\end{equation}
where we take the principal branch of the square root. We view $s > N$
as a parameter of the system and we will call the point process on
$\C$ whose joint density is given by \eqref{eq:0} and \eqref{eq1} the
\emph{complex reciprocal Mahler ensemble}.  Similarly, the joint
densities given by \eqref{eq:3} and \eqref{eq1} define a process on
roots of real polynomials, and we will call this process the
\emph{real reciprocal Mahler process}. The reason we call these point
processes Mahler ensembles is they can be interpreted as choosing
polynomials uniformly at random from starbodies of Mahler measure when
viewed as distance functions (in the sense of the geometry of numbers)
on coefficient vectors of reciprocal polynomials.

\subsection{Mahler Measure}
\label{sec:mahler-measure}
The Mahler measure of a polynomial $f(z) \in \C[z]$ is given by
\[
M\left( a \prod_{n=1}^N (z - \alpha_n) \right) = |a| \prod_{n=1}^N
\max\big\{1, | \alpha_n | \big\}.
\]
This is an example of a {\em height} or measure of complexity of
polynomials, and arises in number theory when restricted to
polynomials with integer (or otherwise algebraic) coefficients.  There
are many examples of heights (for instance norms of coefficient
vectors), but Mahler measure has the attractive feature that it is
multiplicative: $M(f g) = M(f) M(g)$.

There are many open (and hard) questions revolving around the range of
Mahler measure restricted to polynomials with integer coefficients.
Perhaps the most famous is {\em Lehmer's question} which asks whether
1 is a limit point in the set of Mahler measures of integer
polynomials \cite{MR1503118}.  Since cyclotomic polynomials all have
Mahler measure equal to 1, it is clear that 1 is in this set; it is unknown
whether there is a `next smallest' Mahler measure, though to date the
current champion in this regard provided by Lehmer himself is
\[
z^{10} + z^9 - z^7 - z^6 - z^5 - z^4 - z^3 + z + 1,
\]
whose Mahler measure is approximately $1.18$.

A reciprocal polynomial is a polynomial whose coefficient vector is
palindromic; that is, $f(z)$ is a degree $N$ {\em reciprocal} polynomial if
$z^N f(1/z) = f(z)$.  Clearly if $\alpha$ is a root of $f$, then so
too is $1/\alpha$, from which the name `reciprocal' arises.
Reciprocal polynomials arise in the number theoretic investigation of
Lehmer's conjecture since, as was shown by Smyth in the early 1970s,
the Mahler measure of a non-reciprocal integer polynomial is
necessarily greater than approximately 1.3 \cite{MR0289451}.  Thus,
many questions regarding the range of Mahler measure reduce to its
range when restricted to reciprocal integer polynomials.

Another question regarding the range of Mahler measure concerns
estimating the number of integer polynomials (or the number of
reciprocal integer polynomials) of fixed degree $N$ and Mahler measure
bounded by $T > 0$ as $T \rightarrow \infty$.  Such questions were
considered in \cite{MR1868596} and \cite{sinclair-2005}, and the main
term in this estimate depends on the volume of the set of real degree
$N$ polynomials (or real reciprocal polynomials) whose Mahler measure is
at most 1.  This set is akin to a unit ball, though it is not convex.
It is from here that our interest in the zeros of random polynomials
chosen uniformly from these sorts of `unit balls' arose. The
(non-reciprocal) case was studied in \cite{MR2903124} and
\cite{SYreal}, and here we take up the reciprocal case.

In order to be precise we need to specify exactly what we mean by
choosing a reciprocal polynomial {\em uniformly} from those with
Mahler measure at most 1. For $\lambda \in [0,\infty)$ we define the
$\lambda$-homogeneous Mahler measure by
\begin{equation}
\label{mahler}
M_{\lambda}\left( a \prod_{n=1}^N (z - \alpha_n) \right) =
|a|^{\lambda} \prod_{n=1}^N \max\big\{1, | \alpha_n | \big\}.
\end{equation}
To treat polynomials with complex and real coefficients simultaneously, we shall write $\mathbb K$ to mean $\C$ or $\R$ depending on the considered case. Identifying the coefficient vectors of degree $N$ polynomials with
$\mathbb K^{N+1}$, we also view $M_{\lambda}$ as a function on $\mathbb K^{N+1}$,
and define
\[
B_{\lambda}(\mathbb K) = \big\{ \mathbf a \in \mathbb K^{N+1} : M_{\lambda}(\mathbf a)
  \leq 1 \big\}.
\]
These are the degree $N$ unit-{\em starbodies} for Mahler measure.  We can then define the reciprocal unit starbody
as the intersection of the subspace of reciprocal polynomials with
the $B_{\lambda}(\mathbb K)$.  However, as the set of reciprocal polynomials has
Lebesgue measure zero in $\mathbb K^{N+1}$, this is not an
optimal definition for the purposes of selecting a polynomial
uniformly from this set.  In order to overcome this difficulty we need
some natural parametrization of the set of reciprocal polynomials.

If $N$ is odd, and $f$ is reciprocal, then $-f(-1) = f(-1)$.  That is,
$-1$ is always a root of an odd reciprocal polynomial, and
$f(z)/(z+1)$ is an even degree reciprocal polynomial.  Thus, when
considering the roots of random reciprocal polynomials, it suffices to
study even degree reciprocal polynomials.  By declaring that
$M_{\lambda}(z^{-1}) = 1$ and demanding that $M_{\lambda}$ be
multiplicative, we can extend Mahler measure to the algebra of Laurent
polynomials $\C[z, z^{-1}]$, and we define a {\em reciprocal Laurent}
polynomial to be one satisfying $f(z^{-1}) = f(z)$.  Notice that
reciprocal Laurent polynomials form a sub-algebra of Laurent
polynomials.  We can map the set of degree $2N$ reciprocal polynomials
on a set of reciprocal Laurent polynomials via the map
$f(z) \mapsto z^{-N} f(z)$, and the $\lambda$-homogeneous Mahler
measure is invariant under this map.  We will call the image of this
map the set of degree $2N$ reciprocal Laurent polynomials (the leading
monomial is $z^N$, but there are $2N$ zeros).

Now observe that if $f(z) \in \C[z]$ is a degree $N$ polynomial, then $f(z +1/z)$ is a degree $2N$ reciprocal Laurent polynomial and conversely, any reciprocal Laurent polynomial is an algebraic polynomial in $z+1/z$. Hence, we define the {\em reciprocal} Mahler measure of $f(z)$ to be the Mahler measure of $f(z + 1/z)$. Specifically, let $M_{\lambda}^{\rec}: \C[z] \rightarrow [0,\infty)$ be defined by $M_{\lambda}^{\rec}(f(z)) := M_{\lambda}(f(z + 1/z))$. As before, if we identify the set of degree $N$ polynomials with $\mathbb K^{N+1}$ and
define the reciprocal starbodies to be
\[
B^{\rec}_{\lambda}(\mathbb K) = \big\{ \mathbf a \in \mathbb K^{N+1} : M^{\rec}_{\lambda}(\mathbf a)
  \leq 1 \big\}.
\]
The real/complex reciprocal Mahler ensemble is the point process on $\C$ induced by choosing a polynomial uniformly from
$B_{\lambda}^{\rec}(\mathbb K)$ for $\mathbb K=\R$ or $\C$.  It was observed by the first author that the joint density function of such a process is given by \eqref{eq:0} and \eqref{eq1} in the complex case  \cite{MR2145532} and by \eqref{eq:3} and \eqref{eq1} in the real case \cite{sinclair-2005} with $s=(N+1)/\lambda$. Without going into the details we just mention that the factors $|\Delta(\mathbf z)|^2$ and $|\Delta(\mathbf x \vee \mathbf z \vee \overline{\mathbf z})|$ in \eqref{eq:0} and \eqref{eq:3}, respectively, come from the Jacobian of the change of variables from the coefficients of polynomials to their roots and $\phi(z)=e^{-V(z)}$ with $V(z)$ as in \eqref{eq1} appears because
\begin{eqnarray*}
f\left(z + \frac1z\right) &=& a \prod_{n=1}^N \left(z + \frac{1}{z} -  \alpha_j\right) \\
&=& a \prod_{n=1}^N \frac1z \left(z - \frac{\alpha_n+\sqrt{\alpha_n^2-4}}2\right) \left(z - \frac{\alpha_n-\sqrt{\alpha_n^2-4}}2\right),
\end{eqnarray*}
where we take the principal branch of the square root, which necessarily yields that
\begin{equation}
\label{mahler-rec}
M_{\lambda}^{\rec}(f) = |a|^\lambda \prod_{n=1}^N \frac{|\alpha_n+\sqrt{\alpha_n^2-4}|}2.
\end{equation}

\subsection{Mahler Measures and Logarithmic Potentials}
\label{ssec:pot-theory}

Mahler measure and the reciprocal Mahler measure can put into the more
general framework of {\em multiplicative distance functions} formed
with respect to a given compact set $K\subset \C$. Indeed, given a
compact set $K$, it is known that the infimum of the logarithmic
energies
\[
I[\nu] := -\iint\log|z-w|\upd\nu(z)\upd\nu(w)
\]
taken over all probability Borel measures supported on $K$ is either infinite ($K$ cannot support a measure with finite logarithmic energy; such sets are called \emph{polar}) or is finite and is achieved by a unique minimizer, say $\nu_K$, which is called the \emph{equilibrium distribution} on $K$. The logarithmic capacity of $K$ is set to be zero in the former case and $e^{-I[\nu_K]}$ in the latter. It is further known that the function
\[
V_K(z) := I[\nu_K] + \int\log|z-w|\upd\nu_K(w)
\]
is positive and harmonic in $\C\setminus K$ and is zero on $K$ except perhaps on a polar subset. Assume for convenience that $K$ has capacity $1$, i.e., $I[\nu_K]=0$. Then we can define the multiplicative distance function with respect to $K$ on algebraic polynomials by
\[
M(f;K) = \exp\left\{\int\log|f(z)|\upd\nu_K(z)\right\} =
|a|\prod_{n=1}^N e^{-V_K(\alpha_n)},
\]
where \( f(z)=a\prod_{n=1}^N (z-\alpha_n) \). When  $\overline\C\setminus K$ is simply connected, $V_K(z)$ is in fact continuous in $\C$ and is given by  $\log |\Phi_K(z)|$ which is continued by $0$ to $K$, where $\Phi_K$ is a conformal map of the complement of $K$ to the complement of the closed unit disk such that $\Phi_K(\infty)=\infty$. Then
\[
M(f;K) = |a|\prod_{\alpha_n\not\in K}|\Phi_K(\alpha_n)|.
\]
One can easily check now that the Mahler measure $M_1$, see \eqref{mahler}, and the reciprocal Mahler measure $M_1^\rec$, see \eqref{mahler-rec}, are equal to $M\big(\cdot;\overline\D\big)$ and $M\big(\cdot;[-2,2]\big)$, respectively.

\subsection{Determinantal and Pfaffian Point Processes}

Everything in this section is standard, but we include it for
completeness.  The interested reader might consult \cite{MR2129906,
  MR2552864} and \cite{borodin-2008} to get a more detailed
explanation of the complex and real cases, respectively.

Given a Borel set $B \subset \C$ and a random vector $\mathbf z$ chosen according to \eqref{eq:0}, we define the random
variable $N_B$ to be the number of points in $\mathbf z$ that belong to $B$.
The $n$th {\em intensity} or {\em correlation} function of the
ensemble \eqref{eq:0} is a function $R_n : \C^n \rightarrow [0,\infty)$ such that for
disjoint Borel sets $B_1, \ldots, B_n$,
\begin{equation}
\label{expected-complex}
E[N_{B_1} \cdots N_{B_n}] = \int_{B_1 \times \cdots \times B_n}
R_n(\mathbf z) \; \upd\mu_{\C}^n(\mathbf z).
\end{equation}
Correlation functions are the basic objects from which probabilities
of interest are computed.  Ensembles with joint densities of the form
(\ref{eq:0}) are {\em determinantal}, that is, there exists a {\em kernel} $K_N : \C
\times \C \rightarrow [0, \infty)$ such that for all $n$,
\begin{equation}
\label{det-kernel}
R_n(\mathbf z) = \det\left[ K_N(z_j, z_k) \right]_{j,k=1}^n.
\end{equation}

\begin{theorem}
\label{determinantal}
The kernel \( K_N(z,w) \) for the ensemble \eqref{eq:0} is given by
\[
K_N(z,w) = \phi(z)\phi(w)\sum_{n=0}^{N-1} \pi_n(z)\overline{\pi_n(w)}
\]
where $\pi_n$ are orthonormal polynomials with respect to (w.r.t.) the weight $\phi^2$, i.e., $\int\pi_n(z)\overline{\pi_m(z)}\phi(z)^2\upd\mu_\C(z) = \delta_{nm}$.
\end{theorem}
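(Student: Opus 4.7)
My plan follows the standard derivation for orthogonal polynomial ensembles: rewrite $P_N(\mathbf z)$ as the determinant of a reproducing kernel $K_N$, identify $K_N$ as the integral kernel of a rank-$N$ orthogonal projection in $L^2(\mu_\C)$, and then obtain the $n$-point correlation functions by integrating out one variable at a time via the reproducing property.

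First I convert the Vandermonde factor into a determinant of the orthonormal polynomials. Since each $\pi_n$ has degree $n$ with (positive) leading coefficient $\gamma_n$, a lower-triangular change of basis gives
\[
\Delta(\mathbf z) \;=\; \left(\prod_{k=0}^{N-1}\gamma_k\right)^{-1}\det[\pi_{k-1}(z_j)]_{j,k=1}^N,
\]
and the identity $\det(A)\,\overline{\det(A)}=\det(AA^*)$ rewrites $|\Delta(\mathbf z)|^2$ as a scalar multiple of $\det\!\big[\sum_{k=0}^{N-1}\pi_k(z_i)\overline{\pi_k(z_j)}\big]_{i,j=1}^N$. Distributing the weight factors across the rows and columns (via $\det[\alpha_i M_{ij}\alpha_j]=(\prod_i\alpha_i)^2\det M$) absorbs the product $\prod_n\phi(z_n)$ into the matrix, recasting $P_N(\mathbf z)$ as a constant multiple of $\det[K_N(z_i,z_j)]_{i,j=1}^N$ with $K_N$ of the form claimed in the theorem. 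The orthonormality relation $\int\pi_n\overline{\pi_m}\phi^2\,\upd\mu_\C=\delta_{nm}$ means exactly that $\{\phi\pi_n\}_{n=0}^{N-1}$ is an orthonormal system in $L^2(\mu_\C)$, so $K_N(z,w)=\sum_{n=0}^{N-1}\phi(z)\pi_n(z)\,\overline{\phi(w)\pi_n(w)}$ is the kernel of the orthogonal projection onto their span. This yields at once
\[
\int K_N(z,w)K_N(w,u)\,\upd\mu_\C(w) \;=\; K_N(z,u)\qand \int K_N(z,z)\,\upd\mu_\C(z) \;=\; N,
\]
and applying Andr\'eief's identity to $\int_{\C^N}\det[K_N(z_i,z_j)]\,\upd\mu_\C^N(\mathbf z)=N!$ determines $Z_N$ and confirms that $P_N$ is indeed a probability density.

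To finish, I compute $R_n$ by the classical ``contraction of the last variable'' argument: starting from $R_n(\mathbf z) = (N!/((N-n)!Z_N))\!\int P_N\,\upd\mu_\C^{N-n}$, expand the $N\times N$ determinant along its last row, integrate out $z_N$ using the reproducing and trace identities, and observe that this produces an $(N{-}1)\times(N{-}1)$ determinant of the same form multiplied by an integer factor. Iterating $N-n$ times collapses the determinant to $[K_N(z_j,z_k)]_{j,k=1}^n$ and matches \eqref{expected-complex}, establishing \eqref{det-kernel} with the stated kernel. No step is a real obstacle; the place requiring care is the weight-absorption in Step~1, where the factors of $\phi$ must be distributed across rows and columns in a manner consistent with the orthogonality convention $\int\pi_n\overline{\pi_m}\phi^2\,\upd\mu_\C=\delta_{nm}$ adopted in the theorem.
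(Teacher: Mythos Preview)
Your argument is correct and is precisely the standard derivation (Vandermonde $\to$ orthonormal polynomial determinant, Andr\'eief/Gram identity to normalize, reproducing property plus row-by-row integration to contract the kernel determinant). There is nothing to compare it against, however: the paper does not prove this theorem. It is stated in a section that opens with ``Everything in this section is standard, but we include it for completeness,'' and the reader is referred to Mehta's book and Hough--Krishnapur--Peres--Vir\'ag for the details. Your write-up is exactly the kind of proof one finds in those references, so in effect you have supplied what the paper chose to cite rather than reproduce.
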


The situation is a bit more complicated for real ensembles with
partial joint densities given by (\ref{eq:3}) since the expected
number of real roots is positive.  In this case, we define the $(\ell,
m)$-intensity or correlation function to be $R_{\ell, m} : \R^{\ell}
\times \C^m \rightarrow [0, \infty)$ such that for disjoint Borel subsets
$A_1, \ldots, A_{\ell}$ of the real line and disjoint Borel subsets
$B_1, \ldots, B_m$ of the open upper half plane,
\begin{equation}
\label{expected-real}
E[N_{A_1} \cdots N_{A_{\ell}} N_{B_1} \cdots N_{B_m}] = \int_{A_1
  \times \cdots \times A_{\ell}} \int_{B_1 \times \cdots \times B_m} R_{\ell,
  m}(\mathbf x, \mathbf z) \upd \mu_{\R}^{\ell}(\mathbf x) \upd
\mu_{\C}^m(\mathbf z).
\end{equation}
Ensembles with partial joint densities given as in \eqref{eq:3} are
{\em Pfaffian}: there exists a $2 \times 2$ matrix kernel $\mathbf K_N: \C
\times \C \rightarrow \C$ such that
\begin{equation}
\label{MatrixKernels}
R_{\ell,m}(\mathbf x, \mathbf z) = \Pf
\begin{bmatrix}
\left[ \mathbf K_N(x_i, x_j) \right]_{i,j=1}^{\ell} & \left[ \mathbf K_N(x_i, z_n) \right]_{i,n=1}^{\ell,m} \medskip \\
-\left[ \mathbf K_N^{\transpose}(z_k, x_j) \right]_{k,j=1}^{m,\ell} &
\left[ \mathbf K_N(z_k, z_n) \right]_{k,n=1}^{m}
\end{bmatrix}.
\end{equation}
The formula for the kernel depends on the {\em species} (real or complex) of
the argument.  This kernel takes the form
\begin{equation}
\label{real-K_N}
\mathbf K_N(z,w) = \begin{bmatrix}
\kappa_N(z,w) & \kappa_N \epsilon(z,w) \medskip \\
\epsilon \kappa_N(z, w) & \epsilon \kappa_N \epsilon(z,w) + \frac12 \sgn(z-w)
\end{bmatrix},
\end{equation}
where $\sgn(\cdot)$ of a non-real number or zero is set to be zero, $\kappa_N$ is an {\em orto-kernel} $\C \times \C \rightarrow \C$, and
\begin{equation}
\label{eq:22}
\epsilon f(z) := \left\{
\begin{array}{ll}
{\displaystyle \frac{1}{2} \int f(t) \sgn(t - z) \, \upd \mu_\R(t)} &
\mbox{if $z \in \R$,} \medskip \\
\mathrm i \sgn \big(\im(z)\big) f(\overline z) & \mbox{if $z \in \C \setminus \R$,}
\end{array}
\right.
\end{equation}
where when written on the left, as in $\epsilon \kappa_N(z,w)$,
$\epsilon$ acts on $\kappa_N$ as a function of $z$, and when written
on the right it acts on $\kappa_N$ as a function of $w$.

\begin{theorem}
\label{pfaffian}
Let $N=2J$ be even. Then the orto-kernel $\kappa_N(z,w)$ for ensemble \eqref{eq:3} is given by
\begin{equation}
\label{orto-kernel}
\kappa_N(z,w) = 2 \phi(z) \phi(w) \sum_{j=0}^{J-1} \big(\pi_{2j}(z)\pi_{2j+1}(w) - \pi_{2j}(w)\pi_{2j+1}(z) \big),
\end{equation}
where polynomials $\pi_n$, $\deg(\pi_n)=n$, are skew-orthonormal, that is, $\la \pi_{2n} | \pi_{2m} \ra = \la \pi_{2n+1} | \pi_{2m+1} \ra = 0$ and $\la \pi_{2n} | \pi_{2m+1} \ra = - \la \pi_{2m+1} | \pi_{2n} \ra = \delta_{nm}$, w.r.t. the skew-symmetric inner product
\begin{equation}
\label{skew-inner}
\la f  \, | \,  g \ra := \int \big[ (f\phi)(z)\epsilon(g\phi)(z) - \epsilon(f\phi)(z) (g\phi)(z) \big] \, \upd(\mu_{\R} + \mu_{\C})(z).
\end{equation}
\end{theorem}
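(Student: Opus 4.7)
The plan is to follow the standard template for real Pfaffian ensembles (compare \cite{borodin-2008, forrester-book, sinclair-2005}): replace the monomials in $\Delta$ by arbitrary monic polynomials, apply a generalized de Bruijn identity to rewrite the partial joint density as a Pfaffian with a common skew-symmetric Gram matrix, and then specialize the basis to skew-orthonormal polynomials to diagonalize that matrix. The assumption $N=2J$ enters precisely because only for even $N$ can the $N$ columns of the relevant determinant be grouped into pairs.

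Concretely, since $\deg p_n = n$, the matrix $[p_{i-1}(\zeta_j)]$ differs from $[\zeta_j^{i-1}]$ by a unipotent row operation, so $\Delta(\boldsymbol\zeta) = \det[p_{i-1}(\zeta_j)]_{i,j=1}^N$ for any such monic family. Applying this to $\boldsymbol\zeta = \mathbf x \vee \mathbf z \vee \overline{\mathbf z}$ and absorbing the weight by setting $\psi_i := p_{i-1}\phi$, the Vandermonde factor in \eqref{eq:3} becomes $\pm\det[\psi_i(\zeta_j)]$ up to a sign depending only on the chosen ordering of the arguments. Grouping the $N=2J$ columns into $J$ pairs and integrating pairwise then produces, on each real pair $(x_\ell,x_{\ell'})$, the contribution $\int[\psi_i\,\epsilon\psi_j - \epsilon\psi_i\,\psi_j]\,\upd\mu_\R$, and on each conjugate pair $(z_m,\overline z_m)$, the contribution $\mathrm i\,\sgn(\im z_m)[\psi_i(z_m)\psi_j(\overline z_m) - \psi_j(z_m)\psi_i(\overline z_m)]$; these match the two branches of \eqref{eq:22}. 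Summing over $(L,M)$ with the combinatorial factors $(L!M!)^{-1}$ consolidates everything into a single integral against $\upd(\mu_\R+\mu_\C)$, giving the bordered Pfaffian expression for $R_{\ell,m}$ of the form \eqref{MatrixKernels}, where $A = [\la p_{i-1}\,|\,p_{j-1}\ra]_{i,j=1}^N$ is the Gram matrix of the inner product \eqref{skew-inner}.

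Specializing $\{p_n\}$ to the skew-orthonormal family $\{\pi_n\}$ brings $A$ to block-diagonal form with $2\times 2$ blocks $\bigl[\begin{smallmatrix}0&1\\-1&0\end{smallmatrix}\bigr]$, so that $\Pf[A]=1$ and $A^{-1}=-A$; expanding the quadratic form $\sum_{i,j}\psi_i(z)(A^{-1})_{ij}\psi_j(w)$ then produces precisely the sum in \eqref{orto-kernel}, while the remaining three entries of $\mathbf K_N$ in \eqref{real-K_N} are recovered by applying $\epsilon$ to the appropriate argument. The main obstacle is the sign and combinatorial bookkeeping of the de Bruijn step: one must verify that the reordering signs produced by $\mathbf x \vee \mathbf z \vee \overline{\mathbf z}$, the $\mathrm i\,\sgn(\im z_m)$ factors on complex pairs, and the $(L!M!)^{-1}$ normalization conspire to yield the single skew-symmetric form \eqref{skew-inner}. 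Once that identity is in place (or invoked from the references above), the rest is a routine diagonalization, so the written proof would focus on a careful verification of the de Bruijn identity and then simply read off $\kappa_N$ from the block structure of $A^{-1}$.
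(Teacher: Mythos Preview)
The paper does not actually prove Theorem~\ref{pfaffian}; it is presented in Section~1.3 as standard background (``Everything in this section is standard, but we include it for completeness''), with the reader directed to \cite{MR2129906, MR2552864, borodin-2008} for details. Your outline---replace the monomials in the Vandermonde by an arbitrary monic family, apply the de~Bruijn-type integration to obtain a Pfaffian in a common skew-symmetric Gram matrix $A=[\la p_{i-1}\,|\,p_{j-1}\ra]$, then specialize to a skew-orthonormal basis so that $A$ becomes block-diagonal with $2\times2$ blocks---is precisely the standard argument found in those references, so your proposal is correct and faithful to what the paper is citing. The only care needed is the normalization: the factor $2$ in front of \eqref{orto-kernel} (rather than $1$) comes from the convention that both $\mu_\R$ and $\mu_\C$ appear in \eqref{skew-inner}, and you should track this alongside the sign bookkeeping you already flagged.
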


Note that the skew-orthogonal polynomials are not uniquely defined since one may replace $\pi_{2m+1}$  with $\pi_{2m+1}+c\pi_{2m}$ without disturbing skew-orthogonality. Moreover, if polynomials $\pi_{2n}$ and $\pi_{2n+1}$ are skew-orthonormal, then so are $c\pi_{2n}$ and $\pi_{2n+1}/c$. However, neither of these changes alters the expression \eqref{orto-kernel} for the orto-kernel $\kappa_N$.

When $N$ is odd, there is a formula for the orto-kernel similar to
that given in Theorem~\ref{pfaffian}.  We anticipate that the scaling
limits of the odd $N$ case will be the same as those for the even $N$
case (reported below), but due to the extra complexity (with little
additional gain) we concentrate only on the even $N$ case here.

\section{Main Results}

Henceforth, we always assume that $s$ and $N\leq\lfloor s-1 \rfloor$ are such that the limits
\begin{equation}
\label{constants}
c:=\lim_{N\to\infty}(s-N) \in [1,\infty] \quad \text{and} \quad \lambda:=\lim_{N\to\infty}Ns^{-1}\in[0,1]
\end{equation}
exist. Furthermore, we set $\phi(z) := |\Phi(z)|^{-s}$,  $\Phi(z) = (z+\sqrt{z^2-4})/2$. Notice that $\phi(z)$ is well defined in the whole complex plane.

\subsection{Orthogonal and Skew-Orthogonal Polynomials}

Denote by $K_{N,s}(z,w)$ and $\kappa_{N,s}(z,w)$ the kernels introduced in \eqref{det-kernel} and~\eqref{real-K_N}, respectively, for $\phi$ as above. It follows from Theorem~\ref{determinantal} and \cite[Proposition~2]{MR2903124} that
\begin{equation}
\label{KNs}
K_{N,s}(z,w) = \phi(z)\phi(w)\sum_{n=0}^{N-1}\frac{s^2-(n+1)^2}{2\pi s}U_n(z)\overline U_n(w),
\end{equation}
where $U_n(z)$ is the $n$-th monic Chebysh\"ev polynomial of the second kind for $[-2,2]$, i.e.,
\begin{equation}
\label{Cheb2}
U_n(z):=\Phi^n(z)\Phi^\prime(z)\big(1-\Phi^{-2(n+1)}(z)\big) = \frac{\Phi^{n+1}(z)-\Phi^{-n-1}(z)}{\sqrt{z^2-4}}.
\end{equation}

Similarly, we know from Theorem~\ref{pfaffian} that the orto-kernel $\kappa_{N,s}(z,w)$ is expressible via skew-orthogonal polynomials.
\begin{theorem}
\label{skew-ortho}
Polynomials skew-orthonormal w.r.t. skew-inner product \eqref{skew-inner} with $\phi$ as above are given by
\begin{equation}
\label{skew-ortho2}
\left\{
\begin{array}{lll}
\pi_{2n}(z) & = & \displaystyle \frac{4n+3}{16} C_{2n}^{(3/2)}\left(\frac z2\right), \bigskip \\
\pi_{2n+1}(z) & = & \displaystyle \left(1-\frac{(2n+1)^2}{s^2}\right)C_{2n+1}^{(1/2)}\left(\frac z2\right) - \frac1{s^2}C_{2n+1}^{(3/2)}\left(\frac z2\right),
\end{array}
\right.
\end{equation}
where $C_m^{(\alpha)}(x)$ is the classical ultraspherical polynomial of degree $m$, i.e., it is orthogonal to all polynomials of smaller degree w.r.t. the weight $(1-x^2)^{\alpha-1/2}$ on $[-1,1]$ having $\frac{2^m\Gamma(m+\alpha)}{\Gamma(\alpha)\Gamma(m+1)}$ as the leading coefficient.
\end{theorem}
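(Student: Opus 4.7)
The plan is to verify the skew-orthonormality relations by combining a parity symmetry argument with an explicit evaluation of the one non-vanishing pairing. Since $\Phi(-z)=-\Phi(z)$ with the branch of $\sqrt{z^2-4}$ chosen so $\sqrt{z^2-4}\sim z$ at infinity, one has $\phi(-z)=\phi(z)$. Applying $(z,t)\mapsto(-z,-t)$ to the double real integral arising from \eqref{skew-inner}--\eqref{eq:22}, and $z\mapsto-\overline z$ to the complex part, one finds that $\la f | g\ra=0$ whenever $f$ and $g$ have the same parity: the weight is invariant, $f(z)g(t)$ picks up $(-1)^{p_f+p_g}=1$, while $\sgn(t-z)$ (respectively $\im(f(z)\overline{g(z)})$) reverses sign. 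Since $C_m^{(\alpha)}(\cdot)$ has parity $(-1)^m$, the proposed $\pi_{2n}$ is even and $\pi_{2n+1}$ is odd, so $\la\pi_{2j}|\pi_{2k}\ra=\la\pi_{2j+1}|\pi_{2k+1}\ra=0$ follow for free.

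It remains to verify $\la\pi_{2n}|\pi_{2m+1}\ra=\delta_{nm}$. Unfolding \eqref{skew-inner} and \eqref{eq:22} for polynomials $f,g$ with real coefficients yields
\[
\la f | g\ra=\iint_{\R^2} f(z)g(t)\phi(z)\phi(t)\sgn(t-z)\,dz\,dt-4\int_{\im z>0}\phi(z)^2\im\big(f(z)\overline{g(z)}\big)\,dA(z).
\]
I then pass to the uniformizing coordinate $w=\Phi(z)$ mapping $\overline\C\setminus[-2,2]$ onto $\{|w|\ge 1\}$, so that $\phi(z)=|w|^{-s}$ and $z=w+1/w$. Under this substitution, $[-2,2]$ corresponds to $w=e^{i\theta}$ (where $\phi\equiv 1$), the rays $\R\setminus[-2,2]$ to real $w$ with $|w|>1$, and the complex piece to the upper half of $\{|w|>1\}$. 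The ultraspherical polynomials in \eqref{skew-ortho2} expand as Laurent polynomials in $w$ via Gegenbauer's generating identity
\[
\sum_{n\ge 0}C_n^{(\alpha)}\!\left(\frac{w+w^{-1}}{2}\right)t^n=(1-wt)^{-\alpha}(1-t/w)^{-\alpha},
\]
supplemented by $\frac{d}{dx}C_{n+1}^{(\alpha)}(x)=2\alpha\, C_n^{(\alpha+1)}(x)$, which relates $C^{(3/2)}$ to derivatives of Legendre polynomials $C^{(1/2)}=P$.

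After these substitutions, each of the three resulting pieces reduces to elementary integrals: on $|w|=1$, orthogonality of $e^{ik\theta}$ isolates finitely many terms, while on the ray $|w|>1$ the basic ingredient is the power integral $\int_1^\infty r^{k-1-2s}\,dr=1/(2s-k)$. I expect the main obstacle to be the bookkeeping of the real double integral over $\R\setminus[-2,2]$ containing $\sgn(t-z)$: writing $z=w+1/w$, $t=u+1/u$ with $u,w$ on the same ray, one must preserve their ordering in the $w$-variable and track the non-cancelling contributions in each Laurent coefficient. It is precisely the interaction between these terms and the complex integral that is expected to produce the normalization $(4n+3)/16$ of $\pi_{2n}$ and the $s$-dependent correction $1-(2n+1)^2/s^2$ together with the subtraction $-s^{-2}C^{(3/2)}_{2n+1}(z/2)$ in $\pi_{2n+1}$; once $\la\pi_{2n}|\pi_{2m+1}\ra$ collapses to $\delta_{nm}$ the theorem follows.
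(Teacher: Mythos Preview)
Your parity argument for $\la\pi_{2j}|\pi_{2k}\ra=\la\pi_{2j+1}|\pi_{2k+1}\ra=0$ is correct and matches what the paper does (see the proof of Lemma~\ref{lem:moments}, where the same symmetry is invoked for $\la U_{m-1}|U_{n-1}\ra$ with $m,n$ of equal parity). So that half of the theorem is fine.

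The genuine gap is in the other half. You reduce $\la\pi_{2n}|\pi_{2m+1}\ra=\delta_{nm}$ to a computation in the $w$-variable and then say you ``expect'' the bookkeeping to produce the right cancellations. But this is precisely where all the content lies, and your outline does not identify any mechanism that forces the off-diagonal pairings to vanish. Expanding $C^{(1/2)}$ and $C^{(3/2)}$ as Laurent polynomials in $w$ and integrating term by term will give you, for each pair $(n,m)$, a finite rational combination of terms of the form $1/(2s\pm k)$ and $1/(k^2-\ell^2)$; there is no a priori reason these should telescope, and nothing in your plan explains why they do for $m\neq n$ and collapse to $1$ for $m=n$.

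The paper's route is different and supplies exactly this missing mechanism. It first expands the proposed $\pi_k$ in the Chebysh\"ev basis $U_j$ (Lemma~\ref{lem:poly-sum}), obtaining coefficients that are explicit ratios of Gamma functions (the $\Gamma_{2n,i}$ and $\Gamma_{2n+1,i}$ of \eqref{Gammas}). It then computes the elementary moments $\la U_{m-1}|U_{n-1}\ra$ in closed form (Lemma~\ref{lem:moments}). Combining the two, the pairing $\la\pi_{2n}|U_{2j+1}\ra$ becomes a finite sum that is recognized as the partial-fraction expansion of an explicit rational function of a parameter $a$ evaluated at $a=j+1$ (Lemma~\ref{lem:two-sums}, identity \eqref{sum1}); that rational function has a factor $1/\Gamma(a-n)$, which vanishes for $a=1,\ldots,n$ and is nonzero for $a=n+1$. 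This partial-fraction/Gamma identity is the engine of the proof, and your proposal has no analogue of it. If you pursue the generating-function approach you will almost certainly be led back to an equivalent hypergeometric identity; as written, the proposal stops short of it.
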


\subsection{Exterior Asymptotics}

We start with the asymptotic behavior of the kernels in $\overline\C\setminus[-2,2]$.

\begin{figure}[h!]
\centering
\includegraphics[scale=.4]{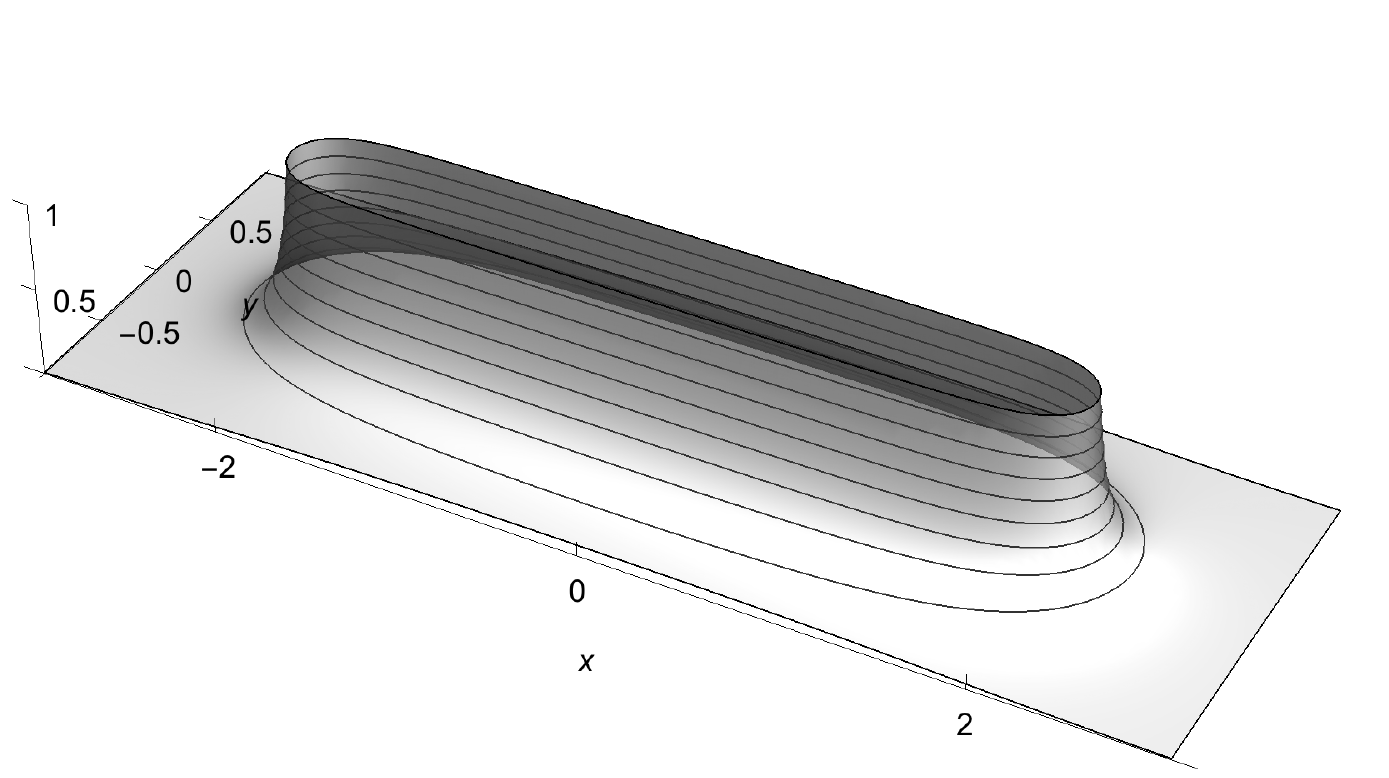}
\includegraphics[scale=.4]{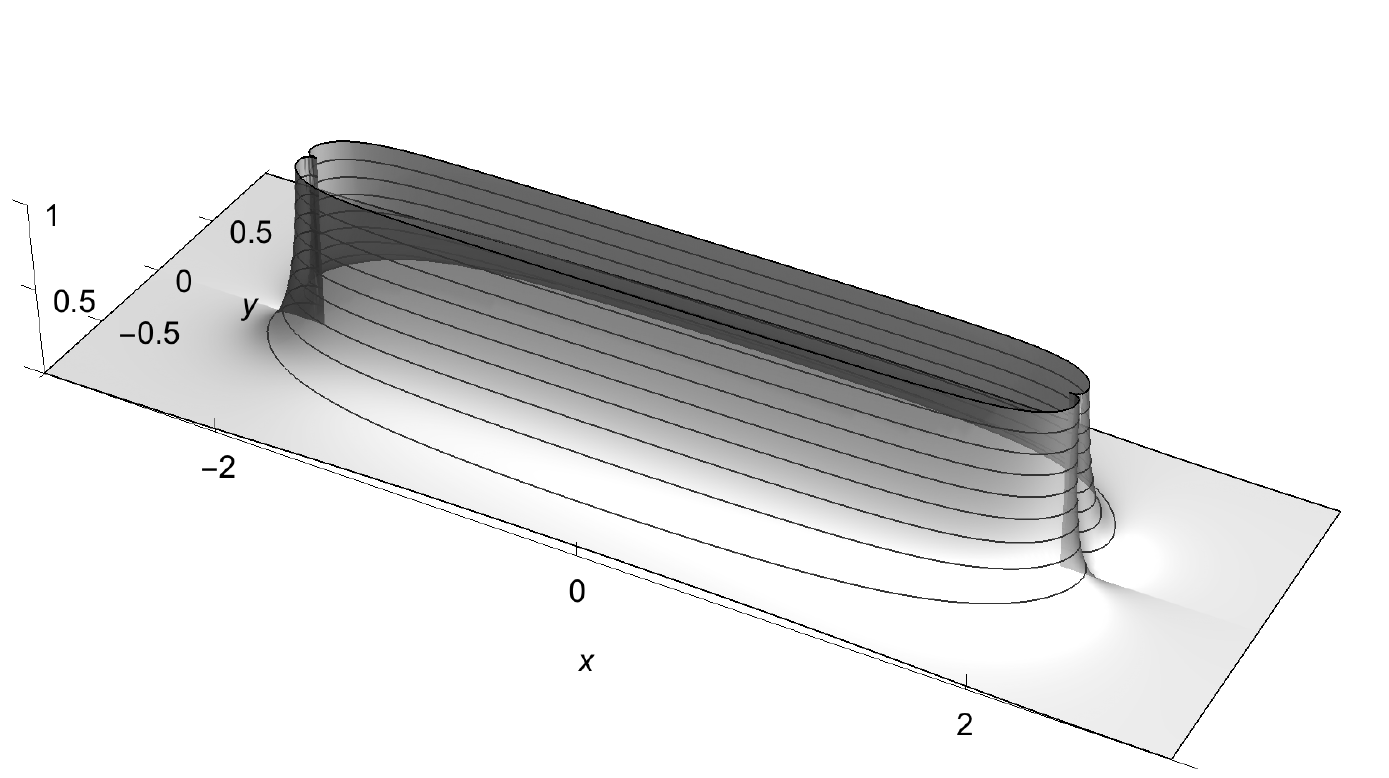}
\begin{caption}{The unscaled limiting spatial density of complex roots near $[-2,2]$ for the complex ensemble on the left and the real ensemble on the right. Notice the cleft along the real axis for the real ensemble. This is due to the fact that roots repel. When the roots come in complex conjugate pairs, this repulsion introduces a paucity of complex zeros near the real axis.}
\label{fig:compdens}
\end{caption}
\end{figure}

\begin{theorem}
\label{complex-exterior}
Assuming \eqref{constants}, it holds that
\begin{equation}
\label{complex-out}
\lim_{N\to\infty} \frac{|\Phi(z)\Phi(w)|^s}{\big(\Phi(z)\overline{\Phi(w)}\big)^N}\frac{K_{N,s}(z,w)}{s-N} = \frac{1+\lambda}{2\pi} \left[1 + \frac{c^{-1}}{\Phi(z)\overline{\Phi(w)}-1}\right] \frac{\Phi^\prime(z)\overline{\Phi^\prime(w)}}{\Phi(z)\overline{\Phi(w)}-1}
\end{equation}
locally uniformly for $z,w\not\in[-2,2]$.
\end{theorem}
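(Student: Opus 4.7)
The key simplification is that $\phi(z)\phi(w)\,|\Phi(z)\Phi(w)|^s=1$, so the outer prefactor exactly cancels the exterior weight. Writing $a:=\Phi(z)\overline{\Phi(w)}$ and using $\Phi^\prime(z)=\Phi(z)/\sqrt{z^2-4}$ to rewrite \eqref{Cheb2} as $U_n(z)=\Phi^\prime(z)\bigl(\Phi^n(z)-\Phi^{-n-2}(z)\bigr)$, the left-hand side of \eqref{complex-out} becomes
\begin{equation*}
\frac{\Phi^\prime(z)\overline{\Phi^\prime(w)}}{a^N(s-N)}\sum_{n=0}^{N-1}\frac{s^2-(n+1)^2}{2\pi s}\bigl(\Phi^n(z)-\Phi^{-n-2}(z)\bigr)\overline{\bigl(\Phi^n(w)-\Phi^{-n-2}(w)\bigr)}.
\end{equation*}
The plan is to expand the two parentheses into four pieces, show that only the $a^n$ piece contributes in the limit, and evaluate the resulting sum by a reindexing.

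For the main piece I would substitute $k:=N-1-n$, so that $s-(n+1)=(s-N)+k$ and $s+(n+1)=(s+N)-k$. This transforms the main contribution into
\begin{equation*}
\frac{\Phi^\prime(z)\overline{\Phi^\prime(w)}}{a(s-N)}\sum_{k=0}^{N-1}\frac{\bigl((s-N)+k\bigr)\bigl((s+N)-k\bigr)}{2\pi s}\,a^{-k},
\end{equation*}
which is directly comparable with its $N=\infty$ counterpart. Under \eqref{constants}, the $k$-th summand tends pointwise to $\frac{1+\lambda}{2\pi}\bigl(1+\tfrac{k}{c}\bigr)a^{-k}$, with the convention that $k/c=0$ when $c=\infty$. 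On any compact $K\subset\C\setminus[-2,2]$, setting $\rho:=\inf_K|\Phi|>1$, the summand is dominated uniformly in $N$ by a constant multiple of $(1+k)\rho^{-2k}$, which is summable. Dominated convergence then reduces the calculation to the elementary identities $\sum_{k\geq 0}a^{-k}=a/(a-1)$ and $\sum_{k\geq 0}ka^{-k}=a/(a-1)^2$, and combining them yields exactly the right-hand side of \eqref{complex-out}.

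The remaining three pieces of the product expansion each carry at least one factor $\Phi^{-n-2}(z)$ or $\overline{\Phi^{-n-2}(w)}$. A direct estimate shows that after division by $a^N$ every such term is bounded by a polynomial factor in $N$ times $\rho^{-2N}$, so their total contribution is geometrically small and vanishes in the limit. The main obstacle is the bookkeeping required to treat the two regimes $c<\infty$ and $c=\infty$ of \eqref{constants} within a single dominated-convergence argument: one must verify that the envelope $(1+k)\rho^{-2k}$ can be taken independent of $N$ uniformly on $K\times K$, which hinges on the inequality $(s-N+k)/(s-N)\leq 1+2k/c$ valid for $N$ large when $c<\infty$, together with a trivial replacement by a constant when $c=\infty$. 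Once this uniform envelope is in hand, locally uniform convergence follows because every estimate depends on $(z,w)$ only through $\rho$.
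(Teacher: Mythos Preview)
Your argument is correct and follows essentially the same route as the paper: cancel the weight against $|\Phi(z)\Phi(w)|^s$, expand $U_n$ via \eqref{Cheb2}, reindex so that the dominant piece becomes a sum in descending powers of $a=\Phi(z)\overline{\Phi(w)}$, and show the remaining three pieces are $O(N^2\rho^{-2N})$. The only methodological difference is in how the main sum is evaluated: the paper rewrites the quadratic coefficient in the new index and sums in closed form using first and second derivatives of a geometric series before letting $N\to\infty$, whereas you pass to the limit termwise via dominated convergence and then sum $\sum a^{-k}$ and $\sum k a^{-k}$. Both lead to the same expression, and your route is arguably tidier.

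One minor remark: your discussion of the ``main obstacle'' is more elaborate than necessary. Since the standing hypothesis $N\le\lfloor s-1\rfloor$ gives $s-N\ge 1$, you have $(s-N+k)/(s-N)\le 1+k$ outright, and $(s+N-k)/s\le 2$, so the envelope $C(1+k)\rho^{-2k}$ is immediate and uniform in $N$ without splitting into the cases $c<\infty$ and $c=\infty$.
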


It follows from \eqref{complex-out} that the limit of $K_{N,s}(z,w)$  is equal to zero when $c=\infty$, while
\[
\lim_{N\to\infty} K_{N,s}(z,z) = \frac1\pi\frac1{|\Phi(z)|^{2c}} \left[c+\frac1{|\Phi(z)|^2-1}\right] \frac{|\Phi^\prime(z)|^2}{|\Phi(z)|^2-1}
\]
when $c<\infty$. Hence, the expected number of zeros of random polynomials in each open subset of $\overline\C\setminus[-2,2]$ is positive and finite in this case, see \eqref{det-kernel}.

\begin{theorem}
\label{real-exterior}
Let $N$ be even. Assuming \eqref{constants}, it holds that
\begin{multline}
\label{real-out}
\lim_{N\to\infty} \frac{|\Phi(z)\Phi(w)|^s}{\big(\Phi(z)\Phi(w)\big)^N}\frac{\kappa_{N,s}(z,w)}{s-N} = \frac{\lambda(1+\lambda)}{2\pi} \left[1 + \frac{c^{-1}}{\Phi(z)\Phi(w)-1}\right] \frac{\Phi^\prime(z)\Phi^\prime(w)}{\Phi(z)\Phi(w)-1} \times \\
\times \frac{\Phi(w)-\Phi(z)}{\sqrt{\Phi^2(z)-1}\sqrt{\Phi^2(w)-1}}
\end{multline}
locally uniformly for $z,w\not\in[-2,2]$.
\end{theorem}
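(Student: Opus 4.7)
The plan is to combine Theorem~\ref{pfaffian} with the explicit skew-orthonormal polynomials of Theorem~\ref{skew-ortho}, so that with $J=N/2$,
\[
\kappa_{N,s}(z,w) = 2\phi(z)\phi(w)\sum_{j=0}^{J-1}\bigl[\pi_{2j}(z)\pi_{2j+1}(w) - \pi_{2j}(w)\pi_{2j+1}(z)\bigr],
\]
and then to evaluate the resulting sum via off-interval asymptotics of the Gegenbauer polynomials followed by closed-form summation. The first ingredient is an asymptotic expansion for $C_{n}^{(\alpha)}(z/2)$ when $z\notin[-2,2]$: from the factored generating function $(1-2xt+t^{2})^{-\alpha}=(1-\Phi(z)t)^{-\alpha}(1-\Phi(z)^{-1}t)^{-\alpha}$ with $x=z/2$, and since $|\Phi(z)|>1$, I obtain uniformly on compacta of $\C\setminus[-2,2]$,
\[
C_{n}^{(\alpha)}(z/2) = \frac{n^{\alpha-1}}{\Gamma(\alpha)}\Phi(z)^{n}\bigl(1-\Phi(z)^{-2}\bigr)^{-\alpha}\bigl(1+O(n^{-1})\bigr),
\]
with a full expansion in inverse powers of $n$.

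Substituting this for $\alpha\in\{1/2,3/2\}$ into \eqref{skew-ortho2} and writing $\beta_{j}=(2j+1)/s$, I obtain at leading order
\[
\pi_{2j}(z)\pi_{2j+1}(w) \sim \frac{j(1-\beta_{j}^{2})}{2\pi}\Phi(z)^{2j}\Phi(w)^{2j+1}\bigl(1-\Phi(z)^{-2}\bigr)^{-3/2}\bigl(1-\Phi(w)^{-2}\bigr)^{-1/2},
\]
together with a subleading piece of relative order $(2j+1)^{2}/[s^{2}(1-\beta_{j}^{2})]$ coming from the $s^{-2}C_{2j+1}^{(3/2)}$ term of $\pi_{2j+1}$. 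Antisymmetrizing in $z\leftrightarrow w$ and using the algebraic identity
\[
\Phi(w)\frac{\Phi(z)^{2}}{\Phi(z)^{2}-1}-\Phi(z)\frac{\Phi(w)^{2}}{\Phi(w)^{2}-1}=\frac{\Phi(z)\Phi(w)(\Phi(w)-\Phi(z))(\Phi(z)\Phi(w)+1)}{(\Phi(z)^{2}-1)(\Phi(w)^{2}-1)},
\]
the leading antisymmetric combination factors as $\frac{j(1-\beta_{j}^{2})}{2\pi}(\Phi(z)\Phi(w))^{2j}\mathcal{F}(z,w)$ with
\[
\mathcal{F}(z,w)=\frac{\Phi(z)^{2}\Phi(w)^{2}(\Phi(w)-\Phi(z))(\Phi(z)\Phi(w)+1)}{\bigl[(\Phi^{2}(z)-1)(\Phi^{2}(w)-1)\bigr]^{3/2}}.
\]

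Setting $u=\Phi(z)\Phi(w)$, the summation reduces to $\sum_{j=0}^{J-1}j(1-\beta_{j}^{2})u^{2j}$, whose closed-form evaluation yields $N(1-\lambda^{2})u^{N}/[2(u^{2}-1)]$ at leading order, plus subleading $u^{N}/(u^{2}-1)^{2}$ terms that eventually furnish the $c^{-1}$ correction. After multiplying by $2\phi(z)\phi(w)\cdot|u|^{s}/u^{N}/(s-N)=2/[u^{N}(s-N)]$ and using $N/(s-N)\to\lambda/(1-\lambda)$ together with $(1-\lambda^{2})=(1-\lambda)(1+\lambda)$, I get the overall prefactor $\lambda(1+\lambda)/(2\pi)$. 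The factorization $u^{2}-1=(u-1)(u+1)$ cancels the $(\Phi(z)\Phi(w)+1)$ in $\mathcal{F}$, leaving $1/(\Phi(z)\Phi(w)-1)$; finally, the identities $\Phi'(z)=\Phi(z)/\sqrt{z^{2}-4}$ and $\Phi^{2}(z)-1=\Phi(z)\sqrt{z^{2}-4}$ yield $\Phi(z)^{2}/(\Phi^{2}(z)-1)^{3/2}=\Phi'(z)/\sqrt{\Phi^{2}(z)-1}$, converting $\mathcal{F}$ and the surviving $1/(\Phi(z)\Phi(w)-1)$ into precisely the form on the right-hand side of \eqref{real-out}.

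The hard part will be the bookkeeping: one must combine both pieces of $\pi_{2j+1}$, perform the closed-form summation while keeping both the leading and the subleading terms (the latter being required for the $c^{-1}$ correction), and control the relative $O(j^{-1})$ errors uniformly in $j$. Because the summands grow geometrically in $(\Phi(z)\Phi(w))^{2}$, uniform relative-error control is enough to exchange limit and summation, but the delicate regime is $\lambda=1$ (so $c<\infty$), where the factor $(1-\beta_{j}^{2})$ vanishes at the top of the summation range and the $s^{-2}C_{2j+1}^{(3/2)}$ piece of $\pi_{2j+1}$ must be retained at leading order. Organizing the algebra so that a single formula captures both $\lambda<1$ and $\lambda=1$ uniformly will be the principal technical challenge.
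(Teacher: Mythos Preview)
Your approach is essentially the same as the paper's: both start from Theorem~\ref{pfaffian}, substitute the explicit skew-orthonormal polynomials of Theorem~\ref{skew-ortho}, and use the off-interval asymptotics of the Gegenbauer polynomials (your formula $(1-\Phi^{-2})^{-\alpha}=(\Phi')^{\alpha}$ is exactly \eqref{ultra}, since $\Phi'=\Phi^{2}/(\Phi^{2}-1)$).

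The one methodological difference is in the summation step. You propose to evaluate $\sum_{j}j(1-\beta_{j}^{2})u^{2j}$ in closed form first and then take the limit, which is how the paper handles the complex case (Theorem~\ref{complex-exterior}) but not this one. The paper instead re-indexes $j\mapsto J-j$, so that the normalized sum becomes $\sum_{k\geq1}a_{N,k}\,u^{-2k}$ with $|a_{N,k}|\lesssim k$ uniformly in $N$; this is a normal family on $|u|>1$, and the limit is obtained simply by letting $N\to\infty$ in each coefficient. That device handles $\lambda<1$ and $\lambda=1$ simultaneously and with no extra bookkeeping, whereas in your closed-form route the leading piece $N(1-\lambda^{2})u^{N}/[2(u^{2}-1)]$ collapses to $0$ when $\lambda=1$ and you must then extract the entire answer---including the prefactor $\lambda(1+\lambda)/(2\pi)=1/\pi$, not just the $c^{-1}$ correction---from what you are calling the ``subleading'' terms and from the $s^{-2}C_{2j+1}^{(3/2)}$ piece. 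That is doable but more laborious; the re-indexing/normal-family argument is the cleaner execution of the same idea.
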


Theorem~\ref{real-exterior} indicates that $\mathbf K_{N,s}(z,w)$ has a non-zero exterior limit only when $c<\infty$.

\begin{theorem}
\label{matrix-exterior}
Under the conditions of Theorem~\ref{real-exterior}, assume in addition that $c<\infty$. Then it holds that
\begin{equation}
\label{K-exterior}
\lim_{N\to\infty}
\left\{
\begin{array}{rcl}
\epsilon\kappa_{N,s}\epsilon(x,y) &=& F(x,y), \medskip \\
\left(|\Phi(z)|/\Phi(z)\right)^N\kappa_{N,s}\epsilon(z,y) &=& -(\partial_xF)(z,y), \medskip \\
\left(|\Phi(w)|/\Phi(w)\right)^N\epsilon\kappa_{N,s}(x,w) &=& -(\partial_yF)(x,w),
\end{array}
\right.
\end{equation}
locally uniformly for $x,y\in\R\setminus[-2,2]$ and $z,w\not\in[-2,2]$, where
\begin{multline*}
F(x,y) := \frac1\pi\int_{\sgn(x)\infty}^{\Phi(x)}\int_{\sgn(y)\infty}^{\Phi(y)}\left[c + \frac1{uv-1}\right] \frac1{|uv|^c}\frac{u-v}{uv-1} \frac{\upd u\upd v}{\sqrt{u^2-1}\sqrt{v^2-1}}  + \bigskip \\
+  \frac1{\sqrt\pi}\frac{\Gamma(\frac{c+1}2)}{\Gamma(\frac c2)}\left(\sgn(x)\int_{\sgn(y)\infty}^{\Phi(y)}-\sgn(y)\int_{\sgn(x)\infty}^{\Phi(x)}\right)\frac1{|u|^c}\frac{\upd u}{\sqrt{u^2-1}}
\end{multline*}
for $x,y\in\R\setminus[-2,2]$, where $\sqrt{u^2-1}$ is understood to be holomorphic in $\C\setminus[-1,1]$.
\end{theorem}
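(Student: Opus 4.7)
The plan is to reduce the three limits to the first and then establish the first by decomposing the integration domain. By the definition of $\epsilon$ on real arguments one has $\partial_x\epsilon g(x)=-g(x)$, so
\[
\partial_x\bigl[\epsilon\kappa_{N,s}\epsilon\bigr](x,y)=-\kappa_{N,s}\epsilon(x,y),\qquad \partial_y\bigl[\epsilon\kappa_{N,s}\epsilon\bigr](x,y)=-\epsilon\kappa_{N,s}(x,y).
\]
Since $\kappa_{N,s}\epsilon(z,y)$ and $\epsilon\kappa_{N,s}(x,w)$ are analytic in the complex variables $z,w\in\C\setminus[-2,2]$ (from the skew-orthogonal polynomial representation in Theorem~\ref{pfaffian}), the last two limits are derivatives of the first after analytic continuation in that variable. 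The oscillatory prefactor $(|\Phi(z)|/\Phi(z))^N$ simply removes the phase $(\Phi(z)/|\Phi(z)|)^N$ that comes from the $\Phi(z)^N$ factor in Theorem~\ref{real-exterior} once one leaves the real axis. Hence it suffices to establish $\epsilon\kappa_{N,s}\epsilon(x,y)\to F(x,y)$ locally uniformly on compacts of $\R\setminus[-2,2]$.

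Writing $\epsilon\kappa_{N,s}\epsilon(x,y)=\tfrac{1}{4}\int_\R\int_\R\kappa_{N,s}(s,t)\sgn(s-x)\sgn(t-y)\,\upd s\,\upd t$ and splitting the domain at $\pm 2$ in each coordinate, the inner square $[-2,2]^2$ contributes zero: the sign factors reduce to the constants $-\sgn(x)$ and $-\sgn(y)$ since $|x|,|y|>2$, and $\iint_{[-2,2]^2}\kappa_{N,s}\,\upd s\,\upd t=0$ by the antisymmetry of $\kappa_{N,s}$ on the symmetric region. On the outer square $(\R\setminus[-2,2])^2$, the pointwise asymptotics of $\kappa_{N,s}(s,t)$ given by Theorem~\ref{real-exterior} apply; the change of variables $u=\Phi(s),\ v=\Phi(t)$ absorbs $\Phi'(s)\Phi'(t)$ into $\upd u\,\upd v$, dominated convergence is supplied by the bound $|\Phi(t)|^{N-s}\le|\Phi(t)|^{-c+\eta}$ for any $\eta>0$ and $N$ large, and the identity $\sgn(s-x)=\sgn(\Phi(s)-\Phi(x))$ for $s,x\in\R\setminus[-2,2]$ converts the sign factors into integrals in the $u,v$ variables. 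The matching of the resulting expression to $F(x,y)$ is cleanly verified by checking that the mixed partial derivative $\partial_x\partial_y F(x,y)$ agrees with the exterior limit of $\kappa_{N,s}(x,y)$ from Theorem~\ref{real-exterior}, and that the two functions share the same behavior as $|x|$ or $|y|\to\infty$.

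The boundary behavior—which produces the single-integral term of $F$—comes from the cross strips $[-2,2]\times(\R\setminus[-2,2])$ and its transpose. Using Theorem~\ref{skew-ortho} together with the fact that $\phi\equiv 1$ on $[-2,2]$ and the parity of $\pi_{2j+1}$, the inner integration collapses to
\[
\int_{-2}^{2}\kappa_{N,s}(s,t)\,\upd s=2\phi(t)\sum_{j=0}^{J-1}\pi_{2j+1}(t)\int_{-2}^{2}\pi_{2j}(s)\,\upd s,
\]
with $\int_{-2}^{2}\pi_{2j}(s)\,\upd s$ evaluated in closed form via the identity $C_n^{(3/2)}=P_{n+1}'$. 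The main obstacle is the extraction of the large-$N$ asymptotics of the Gegenbauer-type sum $\phi(t)\sum_j(4j+3)\pi_{2j+1}(t)$ uniformly on compact subsets of $\R\setminus[-2,2]$ in the joint limit $N\to\infty$, $s-N\to c$; this is accomplished by a Darboux/Laplace argument for Gegenbauer partial sums, and the Gamma ratio $\Gamma((c+1)/2)/(\sqrt\pi\,\Gamma(c/2))$ of the boundary term of $F$ emerges through a half-integer beta identity. After the change of variable $u=\Phi(t)$ and integration against $\sgn(t-y)$ on $\R\setminus[-2,2]$, this contribution yields the single integral $\int_{\sgn(y)\infty}^{\Phi(y)}|u|^{-c}(u^2-1)^{-1/2}\,\upd u$ appearing in $F$, with the prefactors $\sgn(x),\sgn(y)$ tracked through the cross strips by the location of $x,y$ relative to $[-2,2]$.
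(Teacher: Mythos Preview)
Your reduction of the second and third limits to the first via differentiation and analytic continuation is fine and matches the paper's treatment. The decomposition of $\epsilon\kappa_{N,s}\epsilon$, however, contains a genuine gap. You split the double integral at $\pm 2$ and assert that the outer square $(\R\setminus[-2,2])^2$ produces the double integral in $F$ while the cross strips $[-2,2]\times(\R\setminus[-2,2])$ produce the single-integral term with the factor $\Gamma(\tfrac{c+1}{2})/(\sqrt\pi\,\Gamma(\tfrac c2))$. This is not how the pieces fall out. Your cross strip gives the weight $\int_{-2}^2\pi_{2j}(s)\,ds=(4j+3)/4$, so you are led to study $\phi(t)\sum_{j}(4j+3)\pi_{2j+1}(t)$; a direct computation with the asymptotics of Theorem~\ref{skew-ortho} and \eqref{ultra} shows this sum is $O(s^{-1/2})$ and tends to zero, not to the Gamma ratio. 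What actually carries the single-integral contribution is the \emph{full-line} integral $\Gamma_j(s):=\int_\R\phi\,\pi_{2j}$, and in your decomposition this only reassembles once you combine the cross strips with the ``extra'' half-line tails hidden in the outer square (note that $\tfrac12\int_{|s|>2}\sgn(s-x)f(s)\,ds$ is \emph{not} $\int_{\sgn(x)\infty}^x f$; the difference is $-\tfrac12\int_{|s|>2}f$). Only after this recombination do you arrive at the sum $\sum_j\Gamma_j(s)\pi_{2j+1}$, whose limit is computed in the paper as Lemma~\ref{2.3-2} and yields the Gamma ratio. Your ``Darboux/Laplace'' remark is thus aimed at the wrong sum.

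The paper's route avoids this bookkeeping entirely: it uses the elementary identity (Lemma~\ref{2.3-1}) $\epsilon(\phi\pi_{2n})(x)=-\int_{\sgn(x)\infty}^x\phi\pi_{2n}\,du-\tfrac{\sgn(x)}{2}\Gamma_n(s)$, which immediately writes $\epsilon\kappa_{N,s}\epsilon(x,y)$ as the exterior double integral $\int_{\sgn(x)\infty}^x\int_{\sgn(y)\infty}^y\kappa_{N,s}$ plus the boundary terms $(\xi_x\int_{\sgn(y)\infty}^y-\xi_y\int_{\sgn(x)\infty}^x)\phi\sum_j\Gamma_j(s)\pi_{2j+1}$. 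The first piece is handled by Theorem~\ref{real-exterior} (the integration domain stays uniformly away from $[-2,2]$, which also sidesteps your dominated-convergence worry near $\pm2$), and the second by Lemma~\ref{2.3-2}. If you want to salvage your decomposition, you must track the outer-square tails explicitly and show they recombine with the cross strips to produce $\Gamma_j(s)$; after that, you are essentially redoing the paper's proof.
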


Even though $F(x,y)$ is defined for real arguments only, its partial derivatives naturally extend to complex ones. Notice also that \( \frac1c(\partial^2_{xy} F)(z,w) \) is equal to the right-hand side of \eqref{real-out}.

\subsection{Scaling Limits in the Real Bulk}

To find the scaling limits of $K_{N,s}(z,w)$ and $\kappa_{N,s}(z,w)$ on $(-2,2)$, it is convenient to compute these limits for $\phi(z)$ separately. Notice that $\phi(y)=1$ whenever $y\in(-2,2)$.

\begin{proposition}
\label{prop:phi1}
Given $x\in(-2,2)$, set $\omega^{-1}(x):=\sqrt{4-x^2}$. Assuming \eqref{constants}, it holds that
\[
\lim_{N\to\infty} \phi\left(x + \frac a{N\omega(x)}\right) = e^{-|\im(a)|/\lambda},
\]
locally uniformly for $x\in(-2,2)$ and $a\in\C$ when $\lambda>0$ and locally uniformly for $x\in(-2,2)$ and $a\in\C\setminus\R$ when $\lambda=0$ (the limit is zero in this case).
\end{proposition}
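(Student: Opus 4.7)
The plan is to derive a uniform quadratic-remainder expansion of $\log|\Phi(x+\delta)|$ in $\delta$ for $x$ in compact subsets of $(-2,2)$, then substitute $\delta=a/(N\omega(x))=a\sqrt{4-x^2}/N$ and multiply by $-s$. Since $\phi=|\Phi|^{-s}$ and $\log|\Phi|$ vanishes identically on $[-2,2]$, this reduces the proposition to elementary asymptotics.

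For the expansion I would use the primitive $(\log\Phi)'(z)=1/\sqrt{z^2-4}$ on $\C\setminus[-2,2]$, together with the boundary value $\sqrt{z^2-4}\big|_{z=x+i0^+}=i\sqrt{4-x^2}$ for $x\in(-2,2)$ produced by the principal-branch convention. Integrating from $x+i0^+$ to $x+iy$ along the vertical segment gives
\[
\log|\Phi(x+iy)| \;=\; \re\int_0^y \frac{i\,dt}{\sqrt{(x+it)^2-4}} \;=\; \frac{y}{\sqrt{4-x^2}}+O(y^2),\qquad y>0,
\]
and the same formula with $|y|$ for $y<0$. Combining this with the vanishing of $\log|\Phi|$ along $(-2,2)$, a general $\delta=\alpha+i\beta$ is handled by first moving horizontally from $x$ to $x+\alpha\in(-2,2)$ (which contributes nothing) and then vertically, yielding
\[
\log|\Phi(x+\delta)| \;=\; \frac{|\im(\delta)|}{\sqrt{4-x^2}}+O(|\delta|^2),
\]
uniformly for $x$ in compacta of $(-2,2)$ and small $\delta\in\C$.

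Plugging in $\delta=a\sqrt{4-x^2}/N$ gives $|\im(\delta)|=|\im(a)|\sqrt{4-x^2}/N$ and $|\delta|^2=O(1/N^2)$, so
\[
\log\phi\!\left(x+\tfrac{a}{N\omega(x)}\right) \;=\; -\frac{s}{N}\,|\im(a)| \;+\; O\!\left(\frac{s}{N^2}\right).
\]
When $\lambda>0$, $s/N\to 1/\lambda<\infty$ forces the error to vanish and the main term to converge to $-|\im(a)|/\lambda$, giving $\phi\to e^{-|\im(a)|/\lambda}$. When $\lambda=0$ and $\im(a)\neq 0$, one has $s/N\to\infty$, so the main term (with the overall minus sign) diverges to $-\infty$ while the error $s/N^2=(s/N)\cdot O(1/N)$ is of strictly smaller order; hence $\log\phi\to -\infty$ and $\phi\to 0$. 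In both regimes, local uniformity in $(x,a)$ is inherited from the uniform bounds in the expansion.

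The only technical point is the control of the quadratic remainder in the expansion of $\log|\Phi|$ across the branch cut. An alternative cleaner derivation uses the parametrization $x=2\cos\theta$, $\theta\in(0,\pi)$: the identity $\Phi(2\cos z)=e^{iz}$ (valid on $\im z\le 0$) gives $|\Phi(2\cos z)|=e^{-\im z}$, and solving $x+\delta=2\cos(\theta+\eta)$ yields $\eta=-\delta/\sqrt{4-x^2}+O(\delta^2)$, which reproduces the expansion above after taking imaginary parts and using the symmetric formula in the half-plane $\im\delta<0$. This absorbs the branch-cut subtleties into elementary trigonometry and is the approach I expect to use.
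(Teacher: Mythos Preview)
Your argument is correct and is essentially the same as the paper's: both obtain a first-order expansion of $\Phi$ near the cut $(-2,2)$ using the boundary value $(x^2-4)_+^{1/2}=i\sqrt{4-x^2}$, then read off $|\Phi(x+\delta)|^{-s}$. The paper does this by algebraically rewriting $(x_{N,a}^2-4)^{1/2}$ and hence $\Phi(x_{N,a})=\Phi_\pm(x)\big(1\mp \tfrac{ia+O(N^{-1})}{N}\big)$, while you integrate $(\log\Phi)'=1/\sqrt{z^2-4}$ along a two-leg path; these are minor stylistic variants of the same computation. One practical difference: the paper's direct expansion of $\Phi(x_{N,a})$ and $\Phi^{-1}(x_{N,a})$ (its equations labeled (2-1) and (2-2)) is reused verbatim in the bulk-limit proofs that follow, so if you only extract $\log|\Phi|$ you will need to redo that expansion later.
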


In light of Proposition~\ref{prop:phi1}, let us write
\begin{equation}
\label{tildeK}
K_{N,s}(z,w) =: \phi(z)\phi(w) \widetilde K_{N,s}(z,w).
\end{equation}

Then the following theorem holds.

\begin{theorem}
\label{complex-bulk}
Assuming \eqref{constants}, it holds that
\begin{equation}
\label{bulk}
\lim_{N\to\infty}\frac1{sN\omega^2(x)}\widetilde K_{N,s}\left(x + \frac a{N\omega(x)},x + \frac b{N\omega(x)} \right) = \frac1\pi\int_0^1\big(1-(\lambda t)^2\big)\cos\big(\big(\overline b-a\big)t\big)\mathrm dt
\end{equation}
locally uniformly for $x\in(-2,2)$ and $a,b\in\C$.
\end{theorem}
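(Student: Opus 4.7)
The approach is to plug the Chebysh\"ev expansion \eqref{KNs} into the definition \eqref{tildeK}, expand the Chebysh\"ev factors in the bulk scaling, and recognize the resulting finite sum as a Riemann sum modulo a rapidly oscillating remainder handled by Abel summation.

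Parametrize $x=2\cos\theta$ with $\theta\in(0,\pi)$ so that $\omega^{-1}(x)=\sqrt{4-x^2}=2\sin\theta$, and set $t_n:=(n+1)/N$. Writing $\log\Phi(x+h)=\log\Phi(x)+h\,\Phi'(x)/\Phi(x)+O(h^2)$ and using $\Phi'(x)/\Phi(x)=-i/(2\sin\theta)$, one gets $\Phi(x+a/(N\omega(x)))=e^{i\theta-ia/N}(1+O(N^{-2}))$. Raising to the $(n+1)$-th power multiplies the logarithmic error by $n+1\le N$, so it stays of size $O(1/N)$, giving
\[
\Phi^{n+1}(z)-\Phi^{-(n+1)}(z) = 2i\sin\bigl((n+1)\theta-at_n\bigr) + O(1/N)
\]
uniformly in $n$. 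With $\sqrt{z^2-4}=i\sqrt{4-x^2}+O(1/N)$ and its complex conjugate for $w$, \eqref{Cheb2} yields
\[
U_n(z)\,\overline{U_n(w)} = \frac{4\sin\bigl((n+1)\theta-at_n\bigr)\sin\bigl((n+1)\theta-\overline b\,t_n\bigr)}{4-x^2} + O(1/N),
\]
uniformly for $a,b$ in compacta (the sine factors remain bounded).

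Applying $2\sin A\sin B=\cos(A-B)-\cos(A+B)$ and using $\omega^2(x)(4-x^2)=1$ one obtains
\begin{multline*}
\frac{\widetilde K_{N,s}(z,w)}{sN\omega^2(x)}=\frac1{\pi N}\sum_{n=0}^{N-1}\Bigl(1-\tfrac{(n+1)^2}{s^2}\Bigr)\cos\bigl(t_n(\overline b-a)\bigr) \\
-\frac1{\pi N}\sum_{n=0}^{N-1}\Bigl(1-\tfrac{(n+1)^2}{s^2}\Bigr)\cos\bigl(2(n+1)\theta-t_n(a+\overline b)\bigr)+o(1).
\end{multline*}
Since $(n+1)/s=(N/s)t_n\to\lambda t$, the first sum is a Riemann sum for $\frac1\pi\int_0^1(1-(\lambda t)^2)\cos((\overline b-a)t)\,dt$, with uniform convergence on compacta in $(x,a,b)$ by smoothness of the integrand. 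The second sum is oscillatory: Abel summation against the Dirichlet-type bound $\bigl|\sum_{n=0}^{M}e^{2i(n+1)\theta}\bigr|\le|\sin\theta|^{-1}$, which is uniformly bounded for $x$ in compact subsets of $(-2,2)$, combined with the $O(1/N)$ total variation in $n$ of the amplitude $(1-(n+1)^2/s^2)e^{-it_n(a+\overline b)}$, shows this piece is $O(1/N)$.

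The main obstacle is uniform control of the accumulated expansion error in $\Phi^{n+1}$ across the full range $n\le N-1$; this forces the expansion to be carried out at the level of $\log\Phi$ rather than $\Phi$, so that the $O(N^{-2})$ per-step error remains $O(N^{-1})$ after multiplication by $n+1\le N$. A secondary point is that when $a,b\in\R$ the scaled arguments fall on $[-2,2]$, where \eqref{Cheb2} is only a boundary formula; however, $U_n$ is a polynomial, and its value there agrees with the continuous extension $\sin((n+1)\theta)/\sin\theta$, so the expansion remains valid by continuity in $a,b\in\C$.
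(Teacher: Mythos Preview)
Your argument is correct and follows the same route as the paper: expand $U_n(z)\overline{U_n(w)}$ in the bulk scaling, split into a Riemann-sum piece and an oscillatory piece, and kill the latter by summation by parts---the paper's Lemmas~\ref{2.5-1} and~\ref{2.5-2} are precisely your Riemann-sum and Abel-summation steps packaged abstractly, and its case split on the signs of $\im(a),\im(b)$ via $\Phi_\pm$ is what your $\theta$-parametrization and final continuity remark accomplish branch-free. One slip worth fixing: the \emph{total} variation of $n\mapsto(1-(n+1)^2/s^2)e^{-it_n(a+\overline b)}$ is $O(1)$, not $O(1/N)$ (each step is $O(1/N)$ but there are $N$ steps); the $O(1/N)$ bound on the oscillatory piece then comes from the prefactor $1/(\pi N)$, so your conclusion stands.
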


In the real case, analogously to \eqref{tildeK}, let us set
\begin{equation}
\label{tildekappa}
\kappa_{N,s}(z,w) =: \phi(z)\phi(w) \tilde \kappa_{N,s}(z,w) \quad \text{and} \quad \kappa_{N,s}\epsilon(z,y)=:\phi(z)\widetilde{\kappa_{N,s}\epsilon}(z,y),
\end{equation}
where \( y\in(-2,2) \) (\( \phi(y)=1 \) in this case). Then the following theorem holds.

\begin{theorem}
\label{real-bulk}
Let $N$ be even. Assuming \eqref{constants}, it holds that
\begin{equation}
\label{bulk1}
\lim_{N\to\infty}\frac1{N^2\omega^2(x)}\tilde \kappa_{N,s}\left(x + \frac a{N\omega(x)},x + \frac b{N\omega(x)} \right) = \frac1\pi\int_0^1 t\big(1-(\lambda t)^2\big)\sin\big((b-a)t\big)\upd t
\end{equation}
locally uniformly for $x\in(-2,2)$ and $a,b\in\C$. Furthermore, it holds that
\begin{equation}
\label{bulk2}
\lim_{N\to\infty}\frac1{N\omega(x)}\widetilde{\kappa_{N,s}\epsilon}\left(x+\frac a{N\omega(x)},x+\frac b{N\omega(x)}\right) =  \frac1\pi\int_0^1\big(1-(\lambda t)^2\big)\cos\big((b-a)t\big)\upd t
\end{equation}
locally uniformly for $x\in(-2,2)$, $a\in\C$, and $b\in\R$. Finally, we have that
\begin{equation}
\label{bulk3}
\lim_{N\to\infty}\epsilon\kappa_{N,s}\epsilon\left(x+\frac a{N\omega(x)},x+\frac b{N\omega(x)}\right) =  \frac1\pi\int_0^1\frac{1-(\lambda t)^2}t\sin\big((b-a)t\big)\upd t
\end{equation}
locally uniformly for $x\in(-2,2)$ and $a,b\in\R$.
\end{theorem}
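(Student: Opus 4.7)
The plan is to substitute the explicit skew-orthonormal polynomials from Theorem~\ref{skew-ortho} into (\ref{orto-kernel}), to evaluate $\epsilon(\pi_n\phi)$ in closed form on $(-2,2)$, and then to apply the classical Darboux bulk asymptotic of ultraspherical polynomials to each term of the resulting $j$-sum. Parametrize $x=2\cos\theta$ with $\theta\in(0,\pi)$, so that $\omega(x)=1/(2\sin\theta)$ and $z/2=\cos(\theta-a/N)+O(N^{-2})$ uniformly in $a$ on compacta under the scaling $z=x+a/(N\omega(x))$. Darboux's formula
\[
C_n^{(\alpha)}(\cos\psi) = \frac{2\,\Gamma(n+\alpha)}{\Gamma(\alpha)\,n!}\;\frac{\cos\!\big((n+\alpha)\psi-\alpha\pi/2\big)}{(2\sin\psi)^\alpha} + O\!\big(n^{\alpha-3/2}\big),
\]
locally uniform in $\psi\in(0,\pi)$, then provides the bulk asymptotics of the three ultraspherical polynomials $C_{2j}^{(3/2)}(z/2)$, $P_{2j+1}(z/2)$, and $C_{2j+1}^{(3/2)}(z/2)$ appearing in $\pi_{2j}$ and $\pi_{2j+1}$, where we abbreviate $P_n:=C_n^{(1/2)}$. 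With $t:=(2j+1)/N\in(0,1)$, the weight $1-(2j+1)^2/s^2\to 1-(\lambda t)^2$, while the $-C_{2j+1}^{(3/2)}/s^2$ term in $\pi_{2j+1}$ is smaller than the $C_{2j+1}^{(1/2)}$ term by a factor $O(1/N)$ and vanishes in the limit.

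For (\ref{bulk1}), each product $\pi_{2j}(z)\pi_{2j+1}(w)$ becomes, up to Darboux prefactors, $\cos A_j\cos B_j$; the product-to-sum identity splits it into a slow piece of phase $t(b-a)\pm\pi/2$ and a fast piece of phase $\sim 4j\theta$. The fast piece dies in the Riemann sum by a discrete Riemann--Lebesgue estimate, while the slow piece, antisymmetrized under $z\leftrightarrow w$, yields $\sin(t(b-a))$. Collecting the $\Gamma$-ratios, the prefactor $(4j+3)/16\sim Nt/8$ from $\pi_{2j}$ (which is the source of the factor $t$ in the integrand), and the normalization $1/(N^2\omega^2(x))$, the Riemann sum $\sum_j\approx(N/2)\int_0^1 \upd t$ reproduces the right-hand side of (\ref{bulk1}).

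For (\ref{bulk2}) and (\ref{bulk3}), the exponential decay of $\phi$ on $\R\setminus[-2,2]$ reduces $\epsilon(\pi_n\phi)(y)$ for $y\in(-2,2)$ to $\tfrac12\int_{-2}^{2}\pi_n(t)\sgn(t-y)\,\upd t$ up to $O(e^{-cN})$, and the latter integrals are explicit via $\int C_m^{(3/2)}=P_{m+1}$, $\int P_m = (P_{m+1}-P_{m-1})/(2m+1)$, and $P_n(\pm 1) = (\pm 1)^n$, producing
\[
\epsilon(\pi_{2n}\phi)(y) = -\tfrac{4n+3}{8}\,P_{2n+1}(y/2) + O(e^{-cN}),
\]
with an analogous combination of $P_{2n\pm 2}(y/2)$ and constants for $\epsilon(\pi_{2n+1}\phi)(y)$. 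Plugging these into the expansions
\[
\widetilde{\kappa_{N,s}\epsilon}(z,y) = 2\sum_{j=0}^{J-1}\big[\pi_{2j}(z)\epsilon(\pi_{2j+1}\phi)(y) - \pi_{2j+1}(z)\epsilon(\pi_{2j}\phi)(y)\big]
\]
and the analogous factorization for $\epsilon\kappa_{N,s}\epsilon$, then rerunning the Darboux--Riemann-sum argument, yields (\ref{bulk2}) and (\ref{bulk3}). In (\ref{bulk2}) the shift from the $-3\pi/4$ Darboux phase of $C^{(3/2)}$ to the $-\pi/4$ phase of the antiderivative $P$ replaces $\sin$ by $\cos$, producing the $\cos(t(b-a))$ integrand; in (\ref{bulk3}) the difference $P_{2n+2}(y/2)-P_{2n}(y/2)\sim 2\sin\theta\sqrt{2/(\pi n\sin\theta)}\sin(\cdot)$ contributes an extra $O(1/j)$ factor that becomes the $1/t$ weight.

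The main technical obstacle is uniform control of the Darboux remainder across $j\in\{0,\ldots,J-1\}$: once multiplied by the $O(j)$ prefactor in $\pi_{2j}$ and summed, the remainders must contribute $o(N^2\omega^2(x))$ to (\ref{bulk1}), and correspondingly smaller amounts to (\ref{bulk2}) and (\ref{bulk3}). A secondary subtlety in (\ref{bulk3}) is the removable singularity at $t=0$ of the limiting integrand: the handful of smallest-$j$ terms need a separate bound via the Taylor expansion $\sin(t(b-a))=(b-a)t+O(t^3)$ to confirm they contribute $o(1)$.
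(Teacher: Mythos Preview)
Your plan is essentially the paper's own proof. The paper packages the Darboux asymptotic (your displayed formula) as \eqref{ultra-bulk}, isolates the ``slow'' and ``fast'' pieces via Lemmas~\ref{2.5-1} and~\ref{2.5-2} (your discrete Riemann--Lebesgue estimate), computes $\epsilon(\phi\pi_n)$ on $(-2,2)$ in closed form (Lemma~\ref{2.6-3}), and assembles everything in Lemmas~\ref{2.6-1}, \ref{2.6-2}, \ref{2.6-5}. Two points are worth flagging.

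\emph{The tail is not exponentially small.} Your claim that $\tfrac12\int_{\R\setminus[-2,2]}\pi_n(t)\phi(t)\sgn(t-y)\,\upd t=O(e^{-cN})$ is false. For even $n$ the tail vanishes exactly by parity, but for odd $n$ it equals $\int_2^\infty \pi_{2n+1}\phi=2/s^2$ (computed in the paper's Lemma~\ref{2.9-3}), which is only $O(N^{-2})$. Fortunately $\sum_j (2/s^2)\pi_{2j}(z)=O(N^{1/2})$ in the bulk, so after dividing by $N\omega(x)$ this piece is negligible and your error is harmless.

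\emph{The constants require a separate oscillation argument.} The ``constants'' you allude to in $\epsilon(\phi\pi_{2n+1})$ are not small: the paper's Lemma~\ref{2.6-3} gives them as $\Delta_n(s)\sim (-1)^{n+1}(1-\lambda^2)/(\sqrt\pi\, n^{3/2})$, so that $\Delta_n(s)\pi_{2n}(x_{N,a})$ is $O(1)$ termwise. Showing $N^{-1}\sum_{n<J}\Delta_n(s)\pi_{2n}(x_{N,a})\to 0$ (the paper's Lemma~\ref{2.6-4}) needs exactly your fast-phase Riemann--Lebesgue mechanism applied to the geometric ratio $\Phi_\pm^2(x)\in\T\setminus\{1\}$; you should say so explicitly rather than leaving it implicit.

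Finally, the paper obtains \eqref{bulk3} from \eqref{bulk2} in one line via $\epsilon\kappa_{N,s}\epsilon(x,y)=\int_x^y\kappa_{N,s}\epsilon(u,y)\,\upd u$, which is cleaner than repeating the Darboux computation and sidesteps your small-$t$ worry entirely.
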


\begin{figure}[h!]
\centering
\includegraphics[scale=.6]{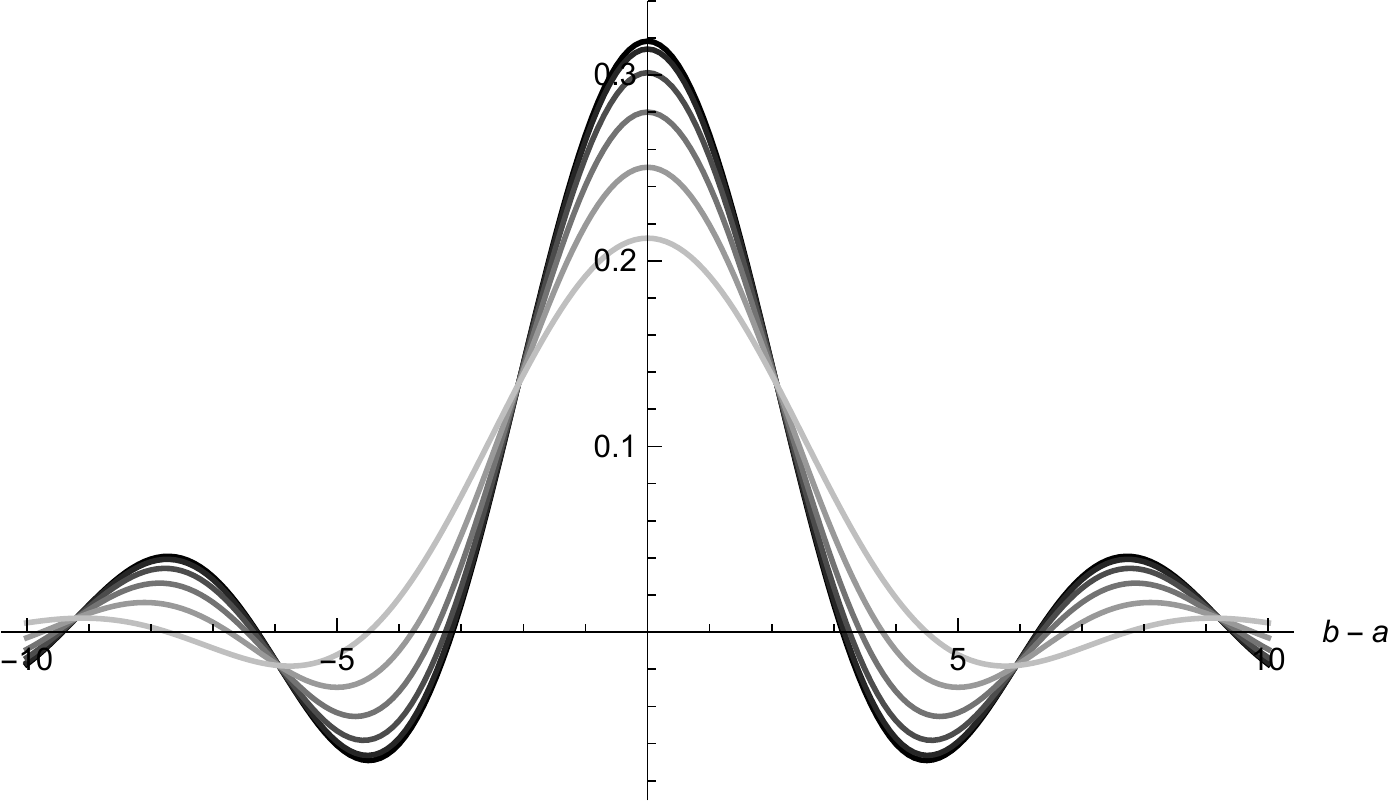}
\begin{caption}{The scaled kernels \eqref{bulk} and \eqref{bulk2} as a function $(b-a) \in \R$ for $\lambda \in [0,1]$. The darkest curve is for $\lambda = 0$ and is equal to the classical sine kernel. Note that for the real ensemble, when $\lambda > 0$ this kernel does not tell us about the local density of real roots, but rather tells us about density of complex roots near the real axis. In this situation, as $\lambda$ increases, the attraction of zeros to $[-2,2]$ decreases and the zeros are more likely to drift into the complex plane. This phenomenon is captured by the decrease in amplitude of the kernel.
}
\label{fig:compker}
\end{caption}
\end{figure}

Notice that knowing limit \eqref{bulk2} is sufficient for our purposes as \( \epsilon\kappa_N(z,w) = -\kappa_N\epsilon(w,z) \) by \eqref{orto-kernel} as the orto-kernel is antisymmetric. Observe also that
\[
\lim_{N\to\infty}\frac1{N\omega(x)}\kappa_{N,s}\epsilon\left(x+\frac a{N\omega(x)},x+\frac b{N\omega(x)}\right) = \frac1\pi \frac{\sin(b-a)}{b-a}
\]
uniformly for \(a,b\in\R\) when \( \lambda =0 \) by \eqref{bulk2} (\( \kappa_{N,s}\epsilon(x,x) \) is exactly the function needed to compute the expected number of real zeros, see \eqref{EN} further below).

\subsection{Scaling Limits at the Real Edge}

Since $\phi(-z)=\phi(z)$, $K_{N,s}(-z,-w)=K_{N,s}(z,w)$, and $\kappa_{N,s}(-z,-w)=-\kappa_{N,s}(z,w)$, we report the scaling limits at $2$ only.

\begin{proposition}
\label{prop:phi2}
Assuming \eqref{constants}, it holds that
\[
\lim_{N\to\infty} \phi\left(2 - \frac {a^2}{N^2}\right) = e^{-|\im(a)|/\lambda}
\]
uniformly on compact subsets of $\C$ when $\lambda>0$, and uniformly on compact subsets of $\C\setminus\R$ when $\lambda=0 $ (the limit is zero).
\end{proposition}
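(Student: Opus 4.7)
The plan is to parametrize $z = 2\cosh\theta$, so that $\Phi(z) = e^\theta$ with $\rp{\theta} \ge 0$ and hence $\phi(z) = |\Phi(z)|^{-s} = e^{-s\rp{\theta}}$; the statement then reduces to asymptotics of $\rp{\theta}$ for the local solution to $\cosh\theta = 1 - a^2/(2N^2)$ near $\theta = 0$. Applying the half-angle identity $\cosh\theta - 1 = 2\sinh^2(\theta/2)$ turns this into $\sinh(\theta/2) = \sigma/2$, where $\sigma$ denotes the branch of $\sqrt{-a^2/N^2}$ with $\rp{\sigma} \ge 0$.

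Writing $a = \alpha + i\beta$, one checks that $\sigma = -i\,\sgn(\beta)\,a/N$, so $\rp{\sigma} = |\beta|/N = |\ip{a}|/N$. Inverting with the Taylor expansion of arcsinh then gives
\[
\theta \;=\; \sigma - \sigma^3/24 + O(\sigma^5) \;=\; \sigma\bigl(1 + a^2/(24 N^2) + O(1/N^4)\bigr),
\]
and hence $\rp{\theta} = |\ip{a}|/N + O(1/N^3)$ uniformly for $a$ in any fixed compact subset of $\C$. Multiplying by $s$ and exponentiating closes the argument. When $\lambda > 0$, the assumption \eqref{constants} gives $s/N \to 1/\lambda$ and $s/N^3 \to 0$, so $s\rp{\theta} \to |\ip{a}|/\lambda$ uniformly on compact subsets of $\C$. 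When $\lambda = 0$, on any compact $K \subset \C\setminus\R$ we have $\inf_{a \in K}|\ip{a}| > 0$ while $s/N \to \infty$, so $s\rp{\theta} \to +\infty$ uniformly on $K$ and $\phi(z) \to 0$.

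The main care required is branch bookkeeping near the real axis: for real $a$ the point $z = 2 - a^2/N^2$ eventually lies in $[-2,2]$, where $|\Phi|$ equals $1$ and so $\phi(z) = 1$, consistent with the claimed limit $e^{0} = 1$. The essential computational point is that, because the square root must be taken with $\rp{\sigma} \ge 0$, the leading behavior of $\rp{\theta}$ features the absolute value $|\ip{a}|$ rather than a signed quantity, and it is this absolute value that permits the convergence to be uniform across $\R$ in the case $\lambda > 0$.
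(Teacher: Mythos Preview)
Your proof is correct and follows essentially the same approach as the paper: both arguments reduce to expanding the square root in the Joukowski map near $z=2$ and reading off that $\log|\Phi(y_{N,a})| = |\im(a)|/N + O(N^{-3})$. The paper does this by expanding $\sqrt{y_{N,a}^2-4}$ directly to obtain $\Phi(y_{N,a}) = 1 \mp \mathrm ia/N + O(N^{-2})$ for $\pm\im(a)\ge0$, while you parametrize $z=2\cosh\theta$ and invert $\sinh(\theta/2)=\sigma/2$ via the Taylor series of $\mathrm{arcsinh}$; the branch bookkeeping and the appearance of $|\im(a)|$ via the sign choice in $\sigma$ are handled identically in substance.
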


In the case of random polynomials with complex coefficients the following theorem holds.

\begin{theorem}
\label{complex-edge}
Let $\widetilde K_{N,s}(z,w)$ be as  \eqref{tildeK}. Assuming \eqref{constants}, it holds that
\begin{equation}
\label{edge}
\lim_{N\to\infty}\frac1{sN^3}\widetilde K_{N,s}\left(2 - \frac{a^2}{N^2}, 2 - \frac{b^2}{N^2} \right) = \frac1{2\pi a\overline b} \int_0^1\big(1-(\lambda t)^2\big)\sin(at)\sin\big(\overline b t\big)\mathrm dt
\end{equation}
uniformly for $a,b$ on compact subsets of $\C$.
\end{theorem}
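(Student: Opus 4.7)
The plan is to substitute the explicit formula \eqref{KNs} for $K_{N,s}$ into the left-hand side of \eqref{edge} and recognize the result as a Riemann sum. Writing
$$\widetilde K_{N,s}(z,w)=\sum_{n=0}^{N-1}\frac{s^2-(n+1)^2}{2\pi s}U_n(z)\overline{U_n(w)},$$
the task reduces to (i) the edge asymptotics of the monic Chebyshev polynomials $U_n$ under the scaling $z=2-a^2/N^2$, and (ii) passing from the sum in $n$ to an integral over $[0,1]$.

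For (i), I would use the identity $U_n(2\cos\theta)=\sin((n+1)\theta)/\sin\theta$, which follows from \eqref{Cheb2} and is entire in $\theta$ via the geometric-sum expression $\sum_{k=0}^n e^{i(n-2k)\theta}$. Setting $2\cos\theta=2-a^2/N^2$ gives $\theta=a/N+O(a^3/N^3)$ uniformly for $a$ in compact subsets of $\C$. Writing $t_n:=(n+1)/N$, this yields
$$U_n\!\left(2-\tfrac{a^2}{N^2}\right)=\frac{\sin(at_n)}{\sin(a/N)}\bigl(1+O(N^{-2})\bigr)=\frac{N\sin(at_n)}{a}+O(1),$$
uniformly for $0\le n\le N-1$ and $a$ in a fixed compact set, where $\sin(at_n)/a$ is read as the entire function $t_n\sinc(at_n)$ at $a=0$. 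Since $U_n$ has real coefficients, $\overline{U_n(2-b^2/N^2)}=U_n(2-\overline{b}^2/N^2)$ admits the analogous expansion with $a\mapsto\overline b$.

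For (ii), the hypothesis \eqref{constants} yields $(n+1)/s=(N/s)\,t_n\to\lambda t_n$ uniformly in $n$, whence $\frac{s^2-(n+1)^2}{2\pi s}=\frac{s}{2\pi}(1-(\lambda t_n)^2)+o(s)$. Substituting into $\widetilde K_{N,s}$ produces
$$\widetilde K_{N,s}\!\left(2-\tfrac{a^2}{N^2},\,2-\tfrac{b^2}{N^2}\right)=\frac{sN^2}{2\pi a\overline b}\sum_{n=0}^{N-1}\bigl(1-(\lambda t_n)^2\bigr)\sin(at_n)\sin(\overline b\,t_n)+o(sN^3),$$
where the aggregate error combines the $O(1)$ term from the $U_n$ expansion (summed over $N$ terms weighted by $O(s)$, giving $O(sN^2)$) with the $o(s)$ correction in the weight (giving $o(sN^3)$). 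Since the integrand is smooth in $t$, the rectangular Riemann sum estimate $\sum_{n=0}^{N-1}g(t_n)=N\int_0^1 g(t)\,\mathrm dt+O(1)$ converts the right-hand side to
$$\widetilde K_{N,s}\!\left(2-\tfrac{a^2}{N^2},\,2-\tfrac{b^2}{N^2}\right)=\frac{sN^3}{2\pi a\overline b}\int_0^1\bigl(1-(\lambda t)^2\bigr)\sin(at)\sin(\overline b\,t)\,\mathrm dt+o(sN^3),$$
and dividing by $sN^3$ proves \eqref{edge}.

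The main obstacle is preserving uniformity across $a,b$ in compact subsets of $\C$, in particular at $a=0$ or $b=0$ where numerator and denominator of $\sin(at_n)/\sin(a/N)$ vanish simultaneously. This is handled by recasting the ratio as $Nt_n\sinc(at_n)/\sinc(a/N)$, an entire function of $a$ equal to $Nt_n$ at $a=0$ and converging uniformly to $Nt_n\sinc(at_n)$ as $N\to\infty$ by $\sinc(a/N)=1+O(N^{-2})$. Once this entire-function viewpoint is adopted, the remainder estimates and Riemann-sum convergence are routine.
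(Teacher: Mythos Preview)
Your argument is correct and takes a more direct route than the paper's. The paper works with the representation $U_n(z)=(\Phi^{n+1}(z)-\Phi^{-n-1}(z))/\sqrt{z^2-4}$ from \eqref{Cheb2}, expands $U_n(y_{N,a})\overline{U_n(y_{N,b})}$ into four exponential-type terms, feeds in the expansions \eqref{eq:2} for $\Phi^{\pm1}(y_{N,a})$, and then invokes the exponential Riemann-sum Lemma~\ref{2.5-1} via the computation \eqref{3-6} already carried out in the bulk; the four pieces are reassembled at the end with the product-to-sum identity $\cos(a\mp\overline b)-\cos(a\pm\overline b)=\pm2\sin a\sin\overline b$. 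Because the expansion of $\Phi(y_{N,a})$ in \eqref{eq:2} depends on $\sgn(\im a)$, this forces a case split on the signs of $\im(a),\im(b)$. Your approach, by contrast, uses the single identity $U_n(2\cos\theta)=\sin((n+1)\theta)/\sin\theta$ and a standard Riemann sum, so it avoids the case analysis entirely and does not need the exponential lemma. What the paper's route buys is economy of presentation: the limit \eqref{3-6} and Lemma~\ref{2.5-1} were already established for Theorem~\ref{complex-bulk}, so the edge case is reduced to a two-line corollary.

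One small imprecision: the intermediate multiplicative bound $U_n(2-a^2/N^2)=\frac{\sin(at_n)}{\sin(a/N)}(1+O(N^{-2}))$ is not literally uniform when $\sin(at_n)$ is near a zero, since the additive correction from $\theta_N-a/N=O(N^{-3})$ in the numerator produces an $O(N^{-1})$ term after division by $\sin(a/N)\sim a/N$. The additive statement $U_n(2-a^2/N^2)=\frac{N\sin(at_n)}{a}+O(1)$ (in fact $O(N^{-1})$) does hold uniformly for $a$ in compacta, and that is all you use downstream. Your $\sinc$ rewriting handles the uniformity near $a=0$ correctly.
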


Recall that the Bessel functions of the first kind are defined by
\[
J_\nu(z) = \left(\frac z2\right)^\nu \sum_{n=0}^\infty \frac{(-1)^n}{\Gamma(n+1)\Gamma(n+\nu+1)}\left(\frac z2\right)^{2n},
\]
where we take the principal branch of the $\nu$'s power of $z/2$. Since $J_{1/2}(z)=2\sin(z)/\sqrt{2\pi z}$, the right-hand side of \eqref{edge} specializes to
\[
\frac1{2\sqrt{ab}}\frac{J_{1/2}(a)bJ_{1/2}^\prime(b)-J_{1/2}(b)aJ_{1/2}^\prime(a)}{2(a^2-b^2)}
\]
when $\lambda=0$ and $a,b\in\R$, which is a classical Bessel kernel up to the factor $1/2\sqrt{ab}$.

\begin{theorem}
\label{real-edge}
Let $\tilde\kappa_{N,s}(z,w)$ and $\widetilde{\kappa_{N,s}\epsilon}(z,y)$ be as in \eqref{tildekappa} and $N$ be even. Assuming \eqref{constants}, it holds that
\begin{equation}
\label{edge1}
\lim_{N\to\infty}\frac1{N^4}\tilde\kappa_{N,s}\left(2-\frac{a^2}{N^2},2-\frac{b^2}{N^2}\right) = \frac1{8ab}\int_0^1t\big(1-(\lambda t)^2\big)\mathbb J_{1,1}(at,bt)\upd t
\end{equation}
uniformly for $a,b\in\C$, where $\mathbb J_{1,1}(u,v):=J_1(u)vJ_0(v)-J_1(v)uJ_0(u)$. Furthermore, we have that
\begin{equation}
\label{edge2}
\lim_{N\to\infty}\frac1{N^2}\widetilde{\kappa_{N,s}\epsilon}\left(2-\frac{a^2}{N^2},2-\frac{b^2}{N^2}\right) = \frac1{4a}\int_0^1\big(1-(\lambda t)^2\big)\mathbb J_{1,2}(at,bt)\upd t
\end{equation}
uniformly for $a\in\C$ and $b\in\R$, where $\mathbb J_{1,2}(u,v):=J_1(u)vJ_1(v)+J_0(v)uJ_0(u)$. Finally, it holds that
\begin{equation}
\label{edge3}
\lim_{N\to\infty}\epsilon\kappa_{N,s}\epsilon\left(2-\frac{a^2}{N^2},2-\frac{b^2}{N^2}\right) = \frac12\int_0^1\frac{1-(\lambda t)^2}t \mathbb J_{2,2}(at,bt)\upd t
\end{equation}
uniformly for $a,b\in\R$, where $\mathbb J_{2,2}(u,v):=uJ_1(u)J_0(v)-vJ_1(v)J_0(u)$.
\end{theorem}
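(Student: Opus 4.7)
The plan is to prove \eqref{edge1} directly from the skew-orthonormal formulas \eqref{skew-ortho2} via Mehler--Heine asymptotics and Riemann-sum convergence, and then to obtain \eqref{edge2} and \eqref{edge3} by successive integration using the relations $\partial_y\kappa_{N,s}\epsilon(z,y)=-\kappa_{N,s}(z,y)$ and $\partial_x\epsilon\kappa_{N,s}\epsilon(x,y)=-\kappa_{N,s}\epsilon(x,y)$ (valid for real $y$, $x$), matched on the limit side by the Bessel identities $\partial_v\mathbb J_{1,2}(u,v)=\mathbb J_{1,1}(u,v)$ and $\partial_u\mathbb J_{2,2}(u,v)=\mathbb J_{1,2}(u,v)$ (short verifications from $J_0'=-J_1$ and $xJ_1'(x)=xJ_0(x)-J_1(x)$).

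For \eqref{edge1} the main input is the classical Mehler--Heine formula, specialized to $C_n^{(1/2)}(\cos(\xi/n))\to J_0(\xi)$ and $n^{-2}C_n^{(3/2)}(\cos(\xi/n))\to J_1(\xi)/\xi$, both uniformly on compact subsets of $\xi\in\C$. Writing $z=2-a^2/N^2=2\cos(a/N+O(N^{-3}))$ and setting $t=2j/N$, so that $a/N=at/(2j)+O(N^{-3})$, one obtains $C_{2j}^{(3/2)}(z/2)\sim(2j)^2J_1(at)/(at)$, $C_{2j+1}^{(1/2)}(z/2)\sim J_0(at)$, $C_{2j+1}^{(3/2)}(z/2)\sim(2j+1)^2J_1(at)/(at)$, and analogous formulas at $w$ in the variable $b$. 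Substituting these into \eqref{skew-ortho2} and \eqref{orto-kernel}, the contribution arising from the $-s^{-2}C_{2j+1}^{(3/2)}(w/2)$ piece of $\pi_{2j+1}(w)$, multiplied by $\pi_{2j}(z)$, yields a product of two $J_1$'s symmetric in $(a,b)$ at leading order and hence cancels after antisymmetrization in $(z,w)$ with an error of relative order $1/N$; the surviving terms, after replacing $\sum_{j=0}^{J-1}$ by $(N/2)\int_0^1\upd t$ and using $(2j+1)^2/s^2\to(\lambda t)^2$, reassemble into $\tilde\kappa_{N,s}(z,w)\sim(N^4/(8ab))\int_0^1 t(1-(\lambda t)^2)\mathbb J_{1,1}(at,bt)\upd t$, which is \eqref{edge1} upon division by $N^4$; uniform convergence is inherited from the uniform Mehler--Heine error bounds.

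For \eqref{edge2}, combining the differential identity with Proposition~\ref{prop:phi2} (which gives $\phi(2-b^2/N^2)\to 1$ for real $b$) yields $\partial_b[N^{-2}\widetilde{\kappa_{N,s}\epsilon}(z,2-b^2/N^2)]\to(4a)^{-1}\int_0^1 t(1-(\lambda t)^2)\mathbb J_{1,1}(at,bt)\upd t$, which by $\partial_v\mathbb J_{1,2}=\mathbb J_{1,1}$ matches the $b$-derivative of the right-hand side of \eqref{edge2}. The $a$-dependent integration constant is pinned down by evaluating at $b=0$ (i.e.\ $y=2$). In the expansion $\widetilde{\kappa_{N,s}\epsilon}(z,2)/N^2=(2/N^2)\sum_j[\pi_{2j}(z)\epsilon(\pi_{2j+1}\phi)(2)-\pi_{2j+1}(z)\epsilon(\pi_{2j}\phi)(2)]$, parity of $\pi_n\phi$ reduces $\epsilon(\pi_{2j}\phi)(2)$ to $-\int_0^2\pi_{2j}(w)\upd w=-(4j+3)/8$ via the elementary identity $\int_0^1 C_{2j}^{(3/2)}(v)\upd v=P_{2j+1}(1)-P_{2j+1}(0)=1$, while $\epsilon(\pi_{2j+1}\phi)(2)=\int_2^\infty\pi_{2j+1}(w)\Phi(w)^{-s}\upd w$ is evaluated by substituting $w=2+u^2/N^2$, invoking the modified-Bessel Mehler--Heine formulas $C_n^{(1/2)}(1+\xi^2/(2n^2))\to I_0(\xi)$ and $n^{-2}C_n^{(3/2)}(1+\xi^2/(2n^2))\to I_1(\xi)/\xi$, and using the Laplace transforms $\int_0^\infty uI_0(tu)e^{-u/\lambda}\upd u=\lambda^2(1-(\lambda t)^2)^{-3/2}$ and $\int_0^\infty I_1(tu)e^{-u/\lambda}\upd u=t^{-1}[(1-(\lambda t)^2)^{-1/2}-1]$; a striking cancellation produces the $j$-independent asymptotic $\epsilon(\pi_{2j+1}\phi)(2)\sim 2\lambda^2/N^2$. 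Plugging these values back together with the $\pi_n(z)$ asymptotics from the previous paragraph recovers exactly $(1/4)\int_0^1 t(1-(\lambda t)^2)J_0(at)\upd t$, which equals the right-hand side of \eqref{edge2} at $b=0$, so the integration constant vanishes.

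For \eqref{edge3} the same scheme applies one level up. The identity $\partial_x\epsilon\kappa_{N,s}\epsilon(x,y)=-\kappa_{N,s}\epsilon(x,y)$ combined with \eqref{edge2} gives $\partial_a\epsilon\kappa_{N,s}\epsilon(2-a^2/N^2,y)\to(1/2)\int_0^1(1-(\lambda t)^2)\mathbb J_{1,2}(at,bt)\upd t$, which by $\partial_u\mathbb J_{2,2}=\mathbb J_{1,2}$ matches the $a$-derivative of the right-hand side of \eqref{edge3}. The antisymmetry of $\kappa_{N,s}$ (see \eqref{orto-kernel}) forces $\epsilon\kappa_{N,s}\epsilon(x,x)=0$, matching $\mathbb J_{2,2}(u,u)=0$, which fixes the integration constant at $a=b$. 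The technical heart of the whole argument is the exterior-integral evaluation $\epsilon(\pi_{2j+1}\phi)(2)\sim 2\lambda^2/N^2$ in the second step, which requires both the modified-Bessel analogue of Mehler--Heine and the non-obvious telescoping cancellation afforded by the specific linear combination defining $\pi_{2j+1}$; everything else is routine asymptotic and Bessel-function bookkeeping.
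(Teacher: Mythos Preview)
Your overall architecture matches the paper's: \eqref{edge1} via Mehler--Heine asymptotics for $C_n^{(\alpha)}$ near $1$ plus Riemann-sum convergence, then \eqref{edge2} and \eqref{edge3} by integrating down from \eqref{edge1} after identifying the boundary contributions at $y=2$. The paper packages the Mehler--Heine step slightly differently, via the integral representation $C_n^{(\alpha+1/2)}(z)=\mathrm{const}\cdot\int_{-1}^1(z+v\sqrt{z^2-1})^n(1-v^2)^{\alpha-1/2}\,\upd v$ (Lemma~\ref{2.9-1}), but this is equivalent to what you do.

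The one substantive divergence is your evaluation of $\epsilon(\phi\pi_{2n+1})(2)$. You compute it \emph{asymptotically} by substituting $w=2+u^2/N^2$, invoking modified-Bessel Mehler--Heine, and exploiting the Laplace-transform cancellation to get $\sim 2\lambda^2/N^2$. The paper instead observes that this quantity is \emph{exactly} $2/s^2$ for every $n$ (Lemma~\ref{2.9-3}), obtained algebraically from the Chebyshev expansion \eqref{skew-ortho1}, the explicit primitive \eqref{intU}, and the summation identity \eqref{sum3}. Your route is correct in spirit, but it carries two loose ends you should tighten: (i) Mehler--Heine is only locally uniform in $u$, so the tail $\int_R^\infty$ in your substituted integral needs a separate estimate (the decay of $\Phi(w)^{-s}$ suffices, but it must be said); (ii) the individual Laplace integrals $\int_0^\infty uI_0(tu)e^{-u/\lambda}\,\upd u$ and $\int_0^\infty I_1(tu)e^{-u/\lambda}\,\upd u$ blow up as $\lambda t\to 1^-$, so when $\lambda=1$ the uniformity in $j$ (equivalently in $t\in(0,1)$) is not automatic and the cancellation must be performed \emph{before} passing to the limit. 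The paper's exact identity sidesteps both issues at once. Everything else in your proposal is routine and agrees with the paper.
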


\begin{figure}[h!]
\centering
\includegraphics[scale=.4]{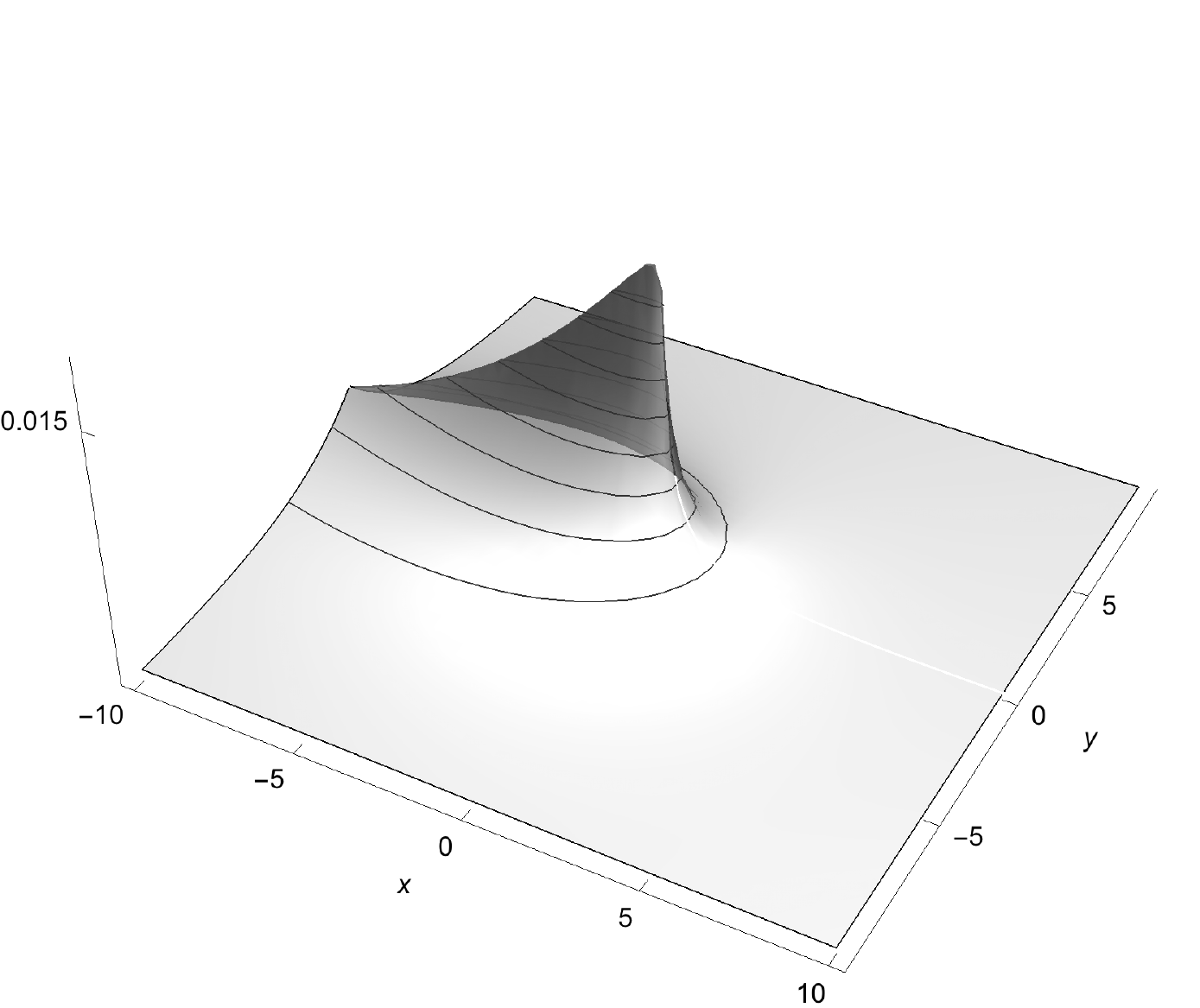}
\includegraphics[scale=.4]{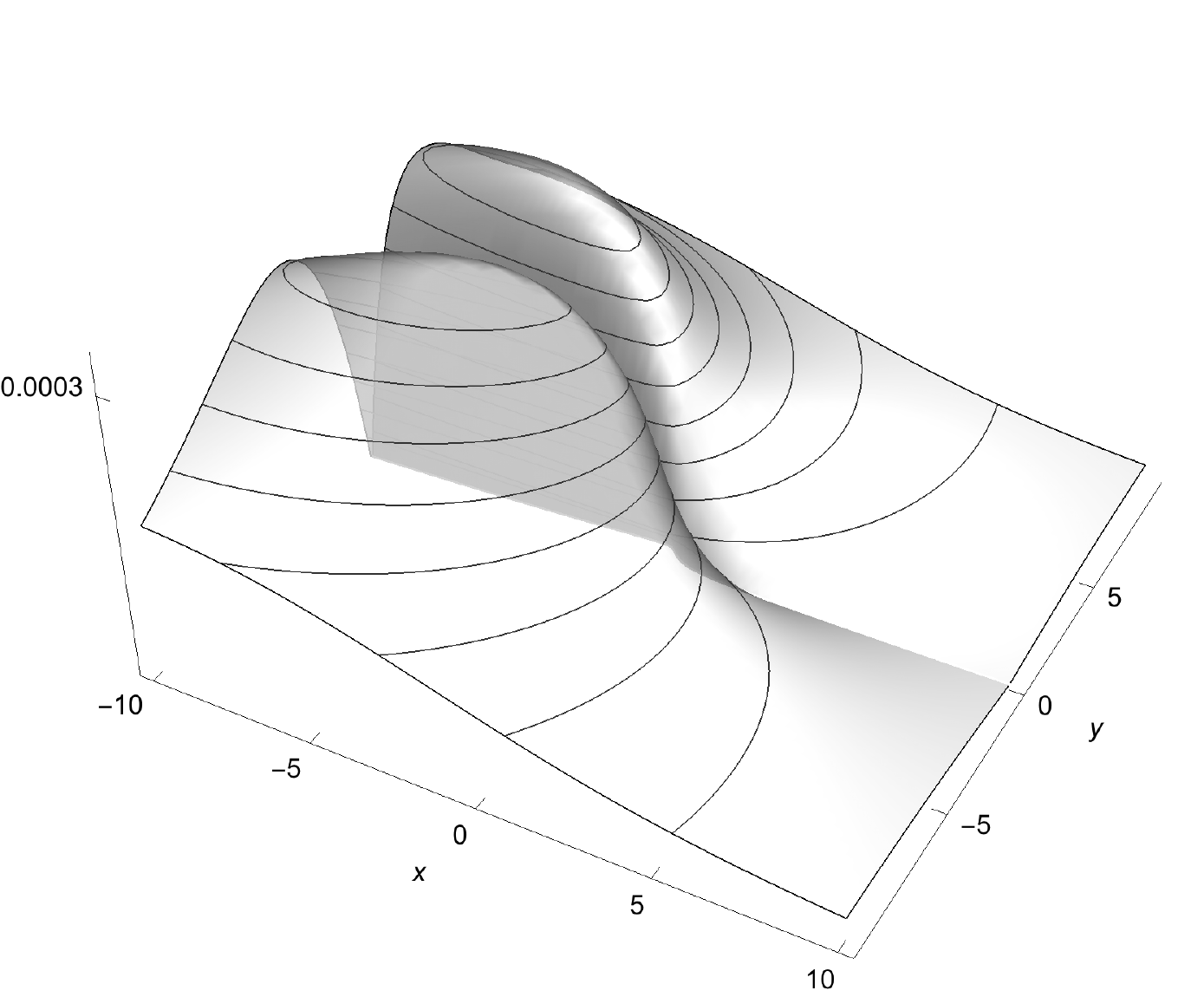}
\begin{caption}{The scaled spatial density of complex roots near $2$ for the complex ensemble on the left and the real ensemble on the right. Here we see a desire for roots to accumulate near $2$ (the origin) with a sharper decrease in the density as we move along the positive $x$-axis (away from the bulk) than along the negative $x$-axis (into the bulk). The difference between the ensembles is starkest in the $y$ direction, and we can see the competition between the attraction to $2$ caused by the potential and the repulsion from the $x$-axis caused by the repulsion between complex conjugate pairs of roots. These images are produced from \eqref{edge} and \eqref{edge2}.}
\label{fig:compdens}
\end{caption}
\end{figure}

Equations \eqref{edge2} and \eqref{edge3} do not cover the cases $b\in\mathrm i \R$ and $a,b\in\mathrm i\R$, respectively.  Such limits exist and we do derive formulas for them, see \eqref{edge2a} and \eqref{edge3a} in Lemma~\ref{2.9-5}. Unfortunately, these formulas are much more cumbersome, which is the reason they are not presented here.

\subsection{Expected Number of Real Zeros}

The zeros of polynomials with real coefficients are either real or come in conjugate symmetric pairs. Hence, one of the interesting questions about such polynomials is the expected number of real zeros. Given a closed set $A\subset\R$, denote by $N_A$ the number of real roots belonging to $A$ of a random degree $N$ polynomial chosen from the real reciprocal Mahler ensemble. Then
\begin{equation}
\label{EN}
E[N_A] = \int_A \Pf\begin{bmatrix}
0 & \kappa_{N,s} \epsilon(x,x) \medskip \\
\epsilon \kappa_{N,s}(x,x) & 0
\end{bmatrix}\upd \mu_\R(x) = \int_A\kappa_{N,s}\epsilon(x,x)\upd\mu_\R(x)
\end{equation}
by \eqref{expected-real}, \eqref{MatrixKernels}, \eqref{real-K_N}, and the anti-symmetry of $\kappa_{N,s}(z,w)$ and $\epsilon\kappa_{N,s}\epsilon(z,w)$. Moreover, the following theorem holds.

\begin{theorem}
\label{expected}
Let $N_\mathsf{in}$ be the number of real roots on $[-2,2]$ of a random degree $N=2J$ polynomial chosen from the real reciprocal Mahler ensemble. Then
\begin{equation}
\label{Ein}
E[N_\mathsf{in}] = N\left[1-\frac{(N+1)(2N+1)}{6s^2}\right].
\end{equation}
Furthermore, let $N_\mathsf{out}$ be the number of real roots on $\R\setminus(-2,2)$ of the said polynomial. Then
\begin{equation}
\label{Eout}
E[N_\mathsf{out}] \sim  - \log\left(1-Ns^{-1}\right),
\end{equation}
where \(f(N,s)\sim g(N,s)\) if there exists \( C>1 \) such that \( C^{-1} \leq f(N,s)/g(s,N)\leq C \).
\end{theorem}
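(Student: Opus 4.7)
By \eqref{EN}, Theorem \ref{pfaffian}, and Theorem \ref{skew-ortho},
\[
\kappa_{N,s}\epsilon(x,x) = 2\phi(x)\sum_{j=0}^{J-1}\bigl[\pi_{2j}(x)\epsilon(\pi_{2j+1}\phi)(x) - \pi_{2j+1}(x)\epsilon(\pi_{2j}\phi)(x)\bigr],
\]
so it suffices to integrate this expression over the relevant subset of $\R$.

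For \eqref{Ein}, I would restrict $x\in[-2,2]$, on which $\phi\equiv 1$, swap the order of integration, and compute the auxiliary integrals $\int_{-2}^2 \pi_k(x)\sgn(t-x)\upd x$ for $k=2j$ and $k=2j+1$. Using the identities $C_{n}^{(3/2)}(y)=P_{n+1}'(y)$ (where $P_n=C_n^{(1/2)}$ is the Legendre polynomial) and $\int P_n=(P_{n+1}-P_{n-1})/(2n+1)$, one finds that the odd-index auxiliary vanishes for $|t|\geq 2$ (by oddness of $\pi_{2j+1}$) and equals $2\int_{-2}^t\pi_{2j+1}$ on $[-2,2]$, whereas the even-index auxiliary is $\frac{4j+3}{4}P_{2j+1}(t/2)$ on $[-2,2]$ and $\pm\frac{4j+3}{4}$ for $t\gtrless\pm 2$. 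The interior contribution then reduces to $A_j:=\int_{-2}^2\pi_{2j+1}(t)P_{2j+1}(t/2)\upd t$, which by Legendre orthogonality on $[-1,1]$ and Bonnet's recursion $(2n+1)P_n=P_{n+1}'-P_{n-1}'$ equals $4(1-(2j+1)^2/s^2)/(4j+3)-4/s^2$; observing that $(4j+3)A_j/2 = 2-8(j+1)^2/s^2$ and summing over $j$ via $\sum_{j=0}^{J-1}(j+1)^2=J(J+1)(2J+1)/6$ yields $2J-\frac{4J(J+1)(2J+1)}{3s^2}$. The outer tail contributes $\sum_j\frac{4j+3}{4}B_j$ with $B_j:=2\int_2^\infty \pi_{2j+1}(t)\phi(t)\upd t$; the substitution $t=u+1/u$, $u>1$, makes each $B_j$ a finite sum of elementary integrals $\int_1^\infty u^{-m}\upd u=1/(m-1)$, and weighted summation telescopes to exactly $\frac{J(2J+1)}{s^2}$. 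Combining yields \eqref{Ein}.

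For \eqref{Eout}, I would leverage the conservation identity $E[N_\mathsf{in}]+E[N_\mathsf{out}]+E[N_{\text{complex}}]=N$, which follows from splitting the skew-orthonormality $\la\pi_{2j}|\pi_{2j+1}\ra=1$ into real and complex contributions, together with the identities $\int_\R\kappa_{N,s}\epsilon(x,x)\upd x=2\sum_j\la\pi_{2j}|\pi_{2j+1}\ra_\R$ and $E[N_{\text{complex}}]=2\sum_j\la\pi_{2j}|\pi_{2j+1}\ra_\C$. In view of \eqref{Ein} this reduces to estimating
\[
E[N_{\text{complex}}] = -8\sum_{j=0}^{J-1}\int_{\HH}|\phi(z)|^2\im\bigl(\pi_{2j}(z)\overline{\pi_{2j+1}(z)}\bigr)\upd\mu_\C(z),
\]
where $\HH$ denotes the upper half-plane. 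Changing variables $z=u+1/u$ (with $|u|>1$, $\im u>0$) maps $\HH\setminus[-2,2]$ biholomorphically onto the upper portion of $\{|u|>1\}$ with $|\phi(z)|^2=|u|^{-2s}$; the $\pi_n(u+1/u)$ become Laurent polynomials of degree $n$ in $u$. In polar coordinates $u=re^{i\theta}$ the radial integrals become $\int_1^\infty r^{-2(s-N-\ell)-1}\upd r = 1/(2(s-N-\ell))$ for $0\leq\ell\leq N-1$, so the resulting harmonic-type sum $\sum_{\ell=0}^{N-1}c_\ell/(s-N-\ell)$ has leading asymptotic $-\tfrac12\log(1-N/s)$; its polynomial part matches $\frac{N(N+1)(2N+1)}{6s^2}$ modulo a log-order residue, yielding \eqref{Eout}.

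\textbf{Main obstacle.} The most delicate step is \eqref{Eout}: when $N\sim s$ the polynomial $\frac{N(N+1)(2N+1)}{6s^2}$ is of order $N$ while $-\log(1-N/s)$ is only logarithmic, so $E[N_{\text{complex}}]$ must cancel the polynomial contribution of \eqref{Ein} down to a logarithmic residue. Establishing this cancellation with matching two-sided constants—i.e., ruling out an accidental collapse of the harmonic-type sum below logarithmic order via sign cancellation in the ultraspherical--Laurent expansion—requires careful term-by-term bookkeeping of the Gegenbauer coefficients.
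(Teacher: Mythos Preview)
Your argument for \eqref{Ein} is correct and close in spirit to the paper's, though organized differently. The paper integrates by parts via $(\epsilon f)'=-f$ to obtain a boundary term (evaluated using Lemma~\ref{2.9-3} as $J(2J+1)/s^2$) plus $I_n:=-4\int_{-2}^2\pi_{2n+1}\,\epsilon(\pi_{2n}\phi)$, which it computes through the Chebysh\"ev expansion of Lemma~\ref{lem:poly-sum} and the closed-form sums of Lemma~\ref{lem:two-sums}, arriving at $I_n=2(1-(2n+2)^2/s^2)$. Your Fubini--Legendre route reaches the same intermediate value $(4j+3)A_j/2=2-8(j+1)^2/s^2$ and the same tail $J(2J+1)/s^2$, so the two executions agree at every checkpoint.

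For \eqref{Eout}, however, your strategy differs fundamentally from the paper's and contains a gap. The paper attacks $E[N_{\mathsf{out}}]$ \emph{directly}: after integration by parts it reduces to a double sum involving the explicit quantities $\Gamma_{2n,i}$ of \eqref{Gammas} and certain ratios $\Gamma^{s,n}_{m}$ of Gamma functions, and then establishes $\Gamma^{s,n}_{-i}\sim (s(s-n-i))^{-3/2}$ and $\Gamma_{2n,i}\sim (n(n-i))^{-1/2}$ via Gautschi's inequality, so that the inner sum is $\sim n s^{-2}(s-2n)^{-1}$ and the outer sum yields $-\log(1-N/s)$. Because the object being estimated is already of logarithmic size, no large cancellation is required. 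Your conservation route, by contrast, writes $E[N_{\mathsf{out}}]=\tfrac{N(N+1)(2N+1)}{6s^2}-E[N_{\mathrm{complex}}]$ and must therefore compute $E[N_{\mathrm{complex}}]$ to within a logarithmic error out of a quantity of order $N$; you correctly flag this as the main obstacle, but your proposed resolution does not address it. In particular, the claimed radial integrals $\int_1^\infty r^{-2(s-N-\ell)-1}\,\upd r=1/(2(s-N-\ell))$ for $0\le\ell\le N-1$ diverge whenever $\ell\ge s-N$, which occurs as soon as $s<2N-1$ (e.g.\ the regime $c<\infty$); the actual exponents coming from the leading Laurent terms of $\pi_{2j}\overline{\pi_{2j+1}}$ under $z=u+1/u$ are $-2s+4j+2$, giving denominators $2s-4j-1\ge 2(s-N)+3>0$, not $2(s-N-\ell)$. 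Thus the ``harmonic-type sum'' is not set up correctly, and no argument is offered for why its ``polynomial part'' would match $N(N+1)(2N+1)/(6s^2)$ exactly---precisely the cancellation on which the whole approach hinges. The paper's direct computation avoids this entirely.
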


\section{Proofs}

\subsection{Proof of Theorem~\ref{skew-ortho}}

We start by representing polynomials from \eqref{skew-ortho2} as series in Chebysh\"ev polynomials.

\begin{lemma}
\label{lem:poly-sum}
Let $\pi_{2n}$ and $\pi_{2n+1}$ be given by \eqref{skew-ortho2}. Then it holds that
\begin{equation}
\label{skew-ortho1}
\left\{
\begin{array}{lll}
\pi_{2n}(z) & = & \displaystyle \frac{n+\frac34}{2\pi} \sum_{i=0}^n (2i+1)\frac{\Gamma(n-i+\frac12)}{\Gamma(n-i+1)}\frac{\Gamma(n+i+\frac32)}{\Gamma(n+i+2)}U_{2i}(z), \medskip \\
\pi_{2n+1}(z) & = & \displaystyle -\frac1{2\pi}\sum_{i=0}^n (2i+2)\left(1-\frac{(2i+2)^2}{s^2}\right)\frac{\Gamma(n-i-\frac12)}{\Gamma(n-i+1)}\frac{\Gamma(n+i+\frac32)}{\Gamma(n+i+3)}U_{2i+1}(z),
\end{array}
\right.
\end{equation}
where $U_n$ is the degree $n$ monic Chebysh\"ev polynomials of the second kind for $[-2,2]$, see \eqref{Cheb2}.
\end{lemma}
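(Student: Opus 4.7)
The plan is to pass from \eqref{skew-ortho2} to \eqref{skew-ortho1} by invoking the classical connection formula that expresses any Gegenbauer polynomial as a finite $C^{(1)}$-series. From the trigonometric form $U_n(2\cos\theta)=\sin((n+1)\theta)/\sin\theta$, which follows at once from \eqref{Cheb2} via $\Phi(2\cos\theta)=e^{\mathrm i\theta}$, one identifies $U_n(z)=C_n^{(1)}(z/2)$. The connection identity
\[
C_m^{(\alpha)}(x)=\sum_{k=0}^{\lfloor m/2\rfloor}\frac{(m+1-2k)(\alpha-1)_k(\alpha)_{m-k}}{k!\,(m-k+1)!}\,C_{m-2k}^{(1)}(x),
\]
which is a standard consequence of the Fourier expansion $C_m^{(\alpha)}(\cos\theta)=\sum_{k}\tfrac{(\alpha)_k(\alpha)_{m-k}}{k!(m-k)!}\cos((m-2k)\theta)$ combined with $2\sin\theta\cos(j\theta)=\sin((j+1)\theta)-\sin((j-1)\theta)$ and a brief telescoping, then gives each of $C_{2n}^{(3/2)}(z/2)$, $C_{2n+1}^{(1/2)}(z/2)$, $C_{2n+1}^{(3/2)}(z/2)$ as an explicit $U_j(z)$-series.

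For the even-indexed polynomial I would apply the displayed identity with $\alpha=3/2$, $m=2n$, and re-index $i=n-k$. Rewriting the Pochhammer symbols via $(1/2)_{n-i}=\Gamma(n-i+1/2)/\sqrt{\pi}$ and $(3/2)_{n+i}=2\Gamma(n+i+3/2)/\sqrt{\pi}$, and multiplying by $(4n+3)/16$, gives
\[
\pi_{2n}(z)=\frac{4n+3}{16}\cdot\frac{2}{\pi}\sum_{i=0}^{n}(2i+1)\frac{\Gamma(n-i+1/2)\,\Gamma(n+i+3/2)}{\Gamma(n-i+1)\,\Gamma(n+i+2)}\,U_{2i}(z),
\]
and the prefactor collapses to $(n+3/4)/(2\pi)$, which is the first line of \eqref{skew-ortho1}. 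This step is essentially bookkeeping.

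For the odd-indexed polynomial I would apply the connection formula to $C_{2n+1}^{(1/2)}(z/2)$ and $C_{2n+1}^{(3/2)}(z/2)$ separately and again re-index $i=n-k$. The two expansions produce Gamma products $\Gamma(n-i-1/2)\Gamma(n+i+3/2)$ and $\Gamma(n-i+1/2)\Gamma(n+i+5/2)$, respectively, which one aligns via the recursion $\Gamma(z+1)=z\Gamma(z)$. Forming the combination $(1-(2n+1)^2/s^2)\,C_{2n+1}^{(1/2)}(z/2)-s^{-2}C_{2n+1}^{(3/2)}(z/2)$ and factoring out the common term
\[
\frac{(2i+2)\,\Gamma(n-i-1/2)\,\Gamma(n+i+3/2)}{2\pi\,\Gamma(n-i+1)\,\Gamma(n+i+3)}
\]
reduces the identity to the algebraic claim
\[
(2n+1)^2-4\bigl(n-i-\tfrac12\bigr)\!\bigl(n+i+\tfrac32\bigr)=(2i+2)^2,
\]
which is verified by direct expansion. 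This identity collapses the bracketed coefficient into $-(1-(2i+2)^2/s^2)$ and reproduces the second line of \eqref{skew-ortho1}. The main obstacle I anticipate is precisely keeping the Pochhammer/Gamma bookkeeping under control while combining the two pieces; the clean collapse of two a priori distinct $U_{2i+1}$-series into a single one hinges on the quadratic identity displayed above, and it is this cancellation that is ultimately responsible for the simple closed form \eqref{skew-ortho2} of the odd skew-orthonormal polynomials.
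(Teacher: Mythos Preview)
Your proposal is correct and follows essentially the same route as the paper: both use the connection formula expanding $C_m^{(\alpha)}$ in the basis $U_k=C_k^{(1)}$ (the paper quotes it from Szeg\H{o} \cite[Eq.~(4.10.27)]{Sz}), apply it separately to each Gegenbauer piece in \eqref{skew-ortho2}, and for the odd case combine the two resulting series via the same algebraic cancellation. Your quadratic identity $(2n+1)^2-4(n-i-\tfrac12)(n+i+\tfrac32)=(2i+2)^2$ is exactly what underlies the paper's displayed Gamma-function identity, obtained there by writing $\Gamma(n-i+\tfrac12)=(n-i-\tfrac12)\Gamma(n-i-\tfrac12)$ and $\Gamma(n+i+\tfrac52)=(n+i+\tfrac32)\Gamma(n+i+\tfrac32)$.
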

\begin{proof}
Observe that $C_k^{(1)}(\frac z2)=U_k(z)$. Hence, it follows from \cite[Eq. (4.10.27)]{Sz} that
\[
C_{2n+\delta}^{(\alpha)}\left(\frac z2\right) = \frac1{\Gamma(\alpha)\Gamma(\alpha-1)}\sum_{i=0}^n (2i+1+\delta)\frac{\Gamma(n-i-1+\alpha)}{\Gamma(n-i+1)}\frac{\Gamma(n+\delta+i+\alpha)}{\Gamma(n+\delta+i+2)} U_{2i+\delta}(z).
\]
The first relation in \eqref{skew-ortho1} can be obtained now by setting $\alpha=\frac32$ and $\delta=0$ in the above formula. The second one also follows from the above representation combined with the identity
\begin{multline*}
-\frac1{2\pi}\left(1-\frac{(2n+1)^2}{s^2}\right)\frac{\Gamma(n-i-\frac12)}{\Gamma(n-i+1)}\frac{\Gamma(n+i+\frac32)}{\Gamma(n+i+3)} - \frac2{\pi s^2}\frac{\Gamma(n-i+\frac12)}{\Gamma(n-i+1)}\frac{\Gamma(n+i+\frac52)}{\Gamma(n+i+3)} \\
= -\frac1{2\pi}\left(1-\frac{(2i+2)^2}{s^2}\right)\frac{\Gamma(n-i-\frac12)}{\Gamma(n-i+1)}\frac{\Gamma(n+i+\frac32)}{\Gamma(n+i+3)}.
\end{multline*}
\end{proof}

Next, we compute the skew-moments of the Chebysh\"ev polynomials of the second kind.

\begin{lemma}
\label{lem:moments}
Let $\big \la \cdot \, | \, \cdot \big\ra$ be the skew-inner product \eqref{skew-inner}. Then it holds that
\[
\big \la U_{m-1} \, | \, U_{n-1} \big \ra = \left\{
\begin{array}{ll}
0, & m+n\text{ is even}, \medskip \\
\displaystyle\frac nm\frac{16s^2}{(n^2-m^2)(s^2-n^2)}, & m\text{ is odd, }n\text{ is even},  \medskip \\
-\big \la U_{n-1} \, | \, U_{m-1}\big \ra,  & m\text{ is even, }n\text{ is odd}.
\end{array}
\right.
\]
\end{lemma}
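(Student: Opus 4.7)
The strategy is to split the skew-inner product into a ``real'' and a ``complex'' part according to the two branches of $\epsilon$ in \eqref{eq:22}:
\[
\la U_{m-1} | U_{n-1} \ra = I_{\R\R} + I_\C,
\]
where (using that $\phi$ is real with $\phi(\bar z)=\phi(z)$ and that $U_{k-1}$ has real coefficients)
\[
I_{\R\R} = \iint_{\R^2} U_{m-1}(x) U_{n-1}(y)\,\phi(x)\phi(y)\,\sgn(y-x)\, dx\, dy
\]
and
\[
I_\C = -2 \int_{\C\setminus\R} \sgn(\im z)\, \phi(z)^2\, \mathrm{Im}\bigl(U_{m-1}(z)\overline{U_{n-1}(z)}\bigr)\, d\mu_\C(z).
\]
The vanishing case $m+n$ even of the lemma is then immediate from the substitutions $(x,y)\to(-x,-y)$ in $I_{\R\R}$ and $z\to-z$ in $I_\C$, since $\phi(-z)=\phi(z)$ and $U_{k-1}(-z)=(-1)^{k-1}U_{k-1}(z)$ conspire to flip the sign of each integrand. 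By the antisymmetry $\la f|g\ra = -\la g|f\ra$ it then suffices to treat $m$ odd and $n$ even.

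For $I_\C$ I would apply the conformal change of variables $w=\Phi(z)$, which maps $\C\setminus[-2,2]$ bijectively onto $\{|w|>1\}$ with $dz = (1-1/w^2)\, dw$. Combined with \eqref{Cheb2}, the identification $\sqrt{z^2-4}=w-1/w$, and the algebraic identity $|1-1/w^2|^2/|w-1/w|^2 = 1/|w|^2$, passing to polar coordinates $w=re^{i\theta}$ reduces $I_\C$ to
\[
\int_1^\infty r^{-2s-1}\int_0^{2\pi} (-2)\sgn(\sin\theta)\,\mathrm{Im}\bigl[(w^m-w^{-m})(\bar w^n - \bar w^{-n})\bigr]\, d\theta\, dr.
\]
The bracket expands into four pure frequencies in $\theta$, and the elementary identity $\int_0^{2\pi}\sgn(\sin\theta)\sin(k\theta)\,d\theta = 2(1-(-1)^k)/k$ annihilates even-frequency terms; since $m\pm n$ is odd only the $\sin((m\pm n)\theta)$ pieces survive. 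The remaining radial integrals are elementary, and after using $(m+n)^2-(m-n)^2 = 4mn$ one obtains
\[
I_\C = \frac{256\, s^2\, m n}{(n^2 - m^2)\bigl(4s^2-(m+n)^2\bigr)\bigl(4s^2-(m-n)^2\bigr)}.
\]

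For $I_{\R\R}$ I would split $\R$ as $[-2,2]\cup(2,\infty)\cup(-\infty,-2)$. On $[-2,2]$ the substitution $x=2\cos\alpha$ gives $\phi\equiv 1$ and $U_{k-1}(2\cos\alpha) = \sin(k\alpha)/\sin\alpha$, turning the $[-2,2]^2$ contribution into a double integral of $4\sin(m\alpha)\sin(n\beta)\sgn(\alpha-\beta)$ over $[0,\pi]^2$ that evaluates via standard trigonometric identities. On $(2,\infty)$ and $(-\infty,-2)$ the substitution $y = u + 1/u$ with $u$ real and $|u|>1$, together with the algebraic reduction
\[
\frac{u^k-u^{-k}}{u-u^{-1}}\cdot u^{-s}(1-u^{-2}) = u^{k-s-1} - u^{-k-s-1},
\]
converts each sub-integrand into a linear combination of pure powers of $u$, whose integrals are elementary. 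Summing all four contributions expresses $I_{\R\R}$ as an explicit rational function of $m$, $n$, $s$.

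The main obstacle is the algebraic combination: the denominator of $I_\C$ carries the symmetric factor $(4s^2-(m+n)^2)(4s^2-(m-n)^2)$, whereas the target denominator $m(s^2-n^2)$ is asymmetric in $m\leftrightarrow n$. Placing $I_{\R\R}+I_\C$ over a common denominator and combining, one checks (after several cancellations of the factors $2s\pm(m\pm n)$ against the contributions of $I_{\R\R}$) that the resulting rational expression collapses to $16 s^2 n/[m(n^2-m^2)(s^2-n^2)]$, which is the stated formula for $m$ odd and $n$ even. The remaining case $m$ even, $n$ odd then follows at once from the antisymmetry of $\la\cdot|\cdot\ra$.
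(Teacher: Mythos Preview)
Your approach is correct and follows essentially the same decomposition as the paper: split the skew-inner product into a complex part and a real double integral, use the parity $U_{k-1}(-z)=(-1)^{k-1}U_{k-1}(z)$ together with $\phi(-z)=\phi(z)$ to dispose of the $m+n$ even case, and reduce to $m$ odd, $n$ even by antisymmetry. Your complex contribution $I_\C$ agrees with the paper's $\la U_{m-1}|U_{n-1}\ra_{\C_+}$ after simplification.

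The only tactical difference is in the real piece. You propose splitting $\R^2$ into nine rectangles and treating each with a separate substitution. The paper instead first exploits parity to reduce to the positive quadrant, writes $\sgn(y-x)+1$ there to convert the double integral into $4\int_0^\infty\int_0^y U_{m-1}(x)\phi(x)\,U_{n-1}(y)\phi(y)\,dx\,dy$, and then evaluates the inner integral in closed form: it equals $T_m(y)/m$ for $y\in[0,2]$ and a short linear combination of powers of $\Phi(y)$ for $y>2$. This organizes the algebra so that the ``$2s$'' terms from the outer integral pair off directly with those in $I_\C$, and the asymmetric denominator $m(s^2-n^2)$ emerges without placing everything over the large symmetric denominator $(4s^2-(m+n)^2)(4s^2-(m-n)^2)$. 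Your route reaches the same answer, but the paper's ordering of the real integral makes the final cancellation you allude to essentially automatic.
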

\begin{proof}
If $f$ and $g$ satisfy $f(\bar z)=\overline{f(z)}$ and $g(\bar z)=\overline{g(z)}$, then it holds that
\begin{multline*}
\la f \, | \, g \ra = \re\left(4\mathrm i\int_{\C_+}f(z)\overline{g(z)}\phi^2(z)\upd\mu_\C(z)\right) \\ + \iint f(x)\phi(x)g(y)\phi(y)\sgn(y-x)d\mu_\R(x)\upd\mu_\R(y).
\end{multline*}
In what follows, we denote the first summand above by $\la f \, | \, g \ra_{\C_+}$ and the second one by $\la f \, | \, g \ra_\R$.

An elementary change of variables and \eqref{Cheb2} yield
\begin{multline}
\label{compl-inner}
\big \la U_{m-1} \, | \, U_{n-1} \big \ra_{\C_+} =  4\frac{1-(-1)^{m-n}}{m-n}\left( \frac1{2s+m+n} - \frac1{2s-m-n} \right) + \\
+ 4\frac{1-(-1)^{m-n}}{m+n} \left( \frac1{2s-m+n} - \frac1{2s+m-n} \right).
\end{multline}

Expression \eqref{compl-inner} is zero when $m$ and $n$ have the same parity. Notice also that if $f$ and $g$ are either both even or both odd, it holds that $\la f \, | \, g \ra_\R = - \la f \, | \, g \ra_\R=0$. Hence, $\big \la U_{m-1} \, | \, U_{n-1} \big \ra=0$ when $m$ and $n$ have the same parity. Moreover, since $\big \la U_{m-1} \, | \, U_{n-1} \big \ra = - \big \la U_{n-1} \, | \, U_{m-1} \big \ra$, we only need to consider the case where $m$ is odd and $n$ is even. In this situation, we get
\begin{eqnarray*}
\big \la U_{m-1} \, | \, U_{n-1} \big \ra_\R &=& 2\int_0^\infty\int_0^\infty U_{m-1}(x)\phi(x)U_{n-1}(y)\phi(y)\big( \sgn(y-x)+1 \big)\upd\mu_\R(x)\upd\mu_\R(y) \\
& = & 4\int_0^\infty\int_0^y U_{m-1}(x)\phi(x)U_{n-1}(y)\phi(y)\upd\mu_\R(x)\upd\mu_\R(y).
\end{eqnarray*}
Moreover, since $\phi(x)\equiv1$ on $(0,2)$ and $\phi(x)=\Phi^{-s}(x)$ for $x>2$, it holds that
\begin{equation}
\label{intU}
\int_0^y U_{m-1}(x)\phi(x)\upd\mu_\R(x) = \left\{
\begin{array}{ll}
T_m(y)/m, & y\in[0,2], \medskip \\
\displaystyle \frac{2s^2}{m(s^2-m^2)} + \frac{\Phi^{-s-m}(y)}{s+m} - \frac{\Phi^{-s+m}(y)}{s-m}, & y\in(2,\infty),
\end{array}
\right.
\end{equation}
where $T_m(z)=\Phi^m(z)+\Phi^{-m}(z)$, $m\geq1$, is the degree $m$ monic Chebysh\"ev polynomial of the first kind for $[-2,2]$. Therefore, $\big \la U_{m-1} \, | \, U_{n-1} \big \ra_\R$ is equal to the sum of
\begin{equation}
\label{real-part1}
\frac4m\int_0^2 T_m(y)U_{n-1}(y)\upd\mu_\R(y) = \frac{16n}{m(n^2-m^2)}
\end{equation}
and of
\begin{multline}
\label{real-part2}
4\int_2^\infty U_{n-1}(y)\left( \frac{2s^2\Phi^{-s}(y)}{m(s^2-m^2)} + \frac{\Phi^{-2s-m}(y)}{s+m} - \frac{\Phi^{-2s+m}(y)}{s-m} \right)\upd\mu_\R(y) = \\
\frac nm \frac{16s^2}{(s^2-n^2)(s^2-m^2)} + \frac4{s+m}\left(\frac1{2s+m-n} - \frac1{2s+m+n} \right) \\ - \frac4{s-m}\left(\frac1{2s-m-n}-\frac1{2s-m+n}\right).
\end{multline}
By grouping corresponding $2s$ terms in \eqref{compl-inner} and \eqref{real-part2}, simplifying the resulting expression, and then adding \eqref{real-part1}, we get
\[
\frac{16mn}{(n^2-m^2)(s^2-m^2)} + \frac{16n}{m(n^2-m^2)}=\frac nm\frac{16s^2}{(n^2-m^2)(s^2-m^2)}.
\]
The claim now follows by adding the first term on the right-hand side of \eqref{real-part2} to the above expression.
\end{proof}

The next lemma is a technical result that we need to continue with the proof of Theorem~\ref{skew-ortho}.

\begin{lemma}
\label{lem:two-sums}
Put
\begin{equation}
\label{Gammas}
\Gamma_{2n,i} := \frac{\Gamma(n-i+\frac12)}{\Gamma(n-i+1)}\frac{\Gamma(n+i+\frac32)}{\Gamma(n+i+2)} \quad \text{and} \quad \Gamma_{2n+1,i} := \frac{\Gamma(n-i-\frac12)}{\Gamma(n-i+1)}\frac{\Gamma(n+i+\frac32)}{\Gamma(n+i+3)}.
\end{equation}
Given $a\in\C$, it holds that
\begin{equation}
\label{sum1}
\sum_{i=0}^n\frac{4\Gamma_{2n,i}}{(2a)^2-(2i+1)^2} = \frac\pi{2a}\frac{\Gamma(n+a+1)\Gamma(a-n-\frac12)}{\Gamma(n+a+\frac32)\Gamma(a-n)}
\end{equation}
and that
\begin{equation}
\label{sum2}
\sum_{i=0}^n\frac{(2i+2)^2\Gamma_{2n+1,i}}{(2i+2)^2-(2a+1)^2} = \frac{(2a+1)\pi}4\frac{\Gamma(n+a+1)\Gamma(a-n-\frac12)}{\Gamma(n+a+\frac52)\Gamma(a-n+1)}.
\end{equation}
Moreover,
\begin{equation}
\label{sum3}
\sum_{i=0}^n \Gamma_{2n,i} = \frac\pi2 \quad \text{and} \quad \sum_{i=0}^n(2i+2)^2\Gamma_{2n+1,i} = -2\pi.
\end{equation}
\end{lemma}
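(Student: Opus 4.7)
The plan is to prove \eqref{sum1} and \eqref{sum2} by reducing each side to a rational function of a single variable---$a^2$ for \eqref{sum1}, $u^2:=(a+1/2)^2$ for \eqref{sum2}---and then matching their partial-fraction decompositions; the identities \eqref{sum3} fall out afterwards by sending $a\to\infty$.

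For \eqref{sum1}, the key algebraic step is the pair of factorizations
\[
\frac{\Gamma(n+a+1)}{\Gamma(a-n)}=\prod_{k=-n}^{n}(a+k)=a\prod_{k=1}^{n}(a^2-k^2),\qquad \frac{\Gamma(a-n-1/2)}{\Gamma(n+a+3/2)}=\prod_{j=0}^{n}\frac{1}{a^2-(j+1/2)^2},
\]
which rewrite the right-hand side as $\tfrac{\pi}{2}\prod_{k=1}^{n}(a^2-k^2)\big/\prod_{j=0}^{n}(a^2-(j+1/2)^2)$. This rational function of $a^2$ has simple poles at exactly the locations where the left-hand side has them (namely $a^2=(i+1/2)^2$, $i=0,\ldots,n$), and both sides vanish like $1/a^2$ at infinity, so the identity is equivalent to matching residues. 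The resulting equation
\[
\Gamma_{2n,i}=\frac{\pi}{2}\cdot\frac{\prod_{k=1}^{n}\big((i+1/2)^2-k^2\big)}{\prod_{j=0,\,j\neq i}^{n}\big((i+1/2)^2-(j+1/2)^2\big)}
\]
is then verified by factoring each $(i+1/2)^2-k^2$ as $(i+1/2-k)(i+1/2+k)$, rewriting both products as ratios of Gamma functions, and handling the Gamma of a negative half-integer that appears through the reflection formula $\Gamma(i-n+1/2)\Gamma(n-i+1/2)=(-1)^{n-i}\pi$; what remains is exactly the expression for $\Gamma_{2n,i}$ in \eqref{Gammas}.

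The proof of \eqref{sum2} is structurally identical. The substitution $u=a+1/2$ shows that both sides are functions of $u^2$: the factor $(2a+1)=2u$ on the right cancels a $u$ arising from $\Gamma(a-n-1/2)/\Gamma(n+a+5/2)$, and the remaining products symmetrize to put the right-hand side in the form $\tfrac{\pi}{2}\prod_{j=1}^{n}(u^2-(j-1/2)^2)\big/\prod_{j=1}^{n+1}(u^2-j^2)$. Matching the residue at $u^2=(i+1)^2$ against the left-hand side residue $-(i+1)^2\Gamma_{2n+1,i}$ again reduces, after applying the reflection formula to $\Gamma(i-n+3/2)$, to the defining expression of $\Gamma_{2n+1,i}$ in \eqref{Gammas}. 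Finally, \eqref{sum3} is obtained by letting $a\to\infty$ in \eqref{sum1} and \eqref{sum2} and using $\Gamma(a+c)/\Gamma(a+d)\sim a^{c-d}$: both sides of each identity behave like $\pi/(2a^2)$, and comparing leading coefficients yields $\sum_i \Gamma_{2n,i}=\pi/2$ and $\sum_i (2i+2)^2\Gamma_{2n+1,i}=-2\pi$.

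I expect the main obstacle to be the bookkeeping in the residue computation---in particular, arranging the products symmetrically around the correct center so that they telescope into clean Gamma-ratios, and carefully tracking the signs that emerge from the reflection formula when Gamma is forced through negative half-integers.
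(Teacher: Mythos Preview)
Your proposal is correct and follows essentially the same route as the paper: both arguments rewrite the right-hand side as a rational function with simple poles at the half-integer (resp.\ integer) locations, compute the residues using the reflection formula $\Gamma(z)\Gamma(1-z)=\pi/\sin(\pi z)$, and invoke the degree count to kill any polynomial part; the identities in \eqref{sum3} are then obtained by letting $a\to\infty$. The only difference is cosmetic: you pass immediately to the variable $a^2$ (resp.\ $u^2=(a+1/2)^2$), whereas the paper works in $a$, uses Pochhammer symbols to exhibit the same rational function, and pairs the conjugate poles at $\pm(m+1/2)$ afterwards --- your choice of variable collapses that pairing step but the substance is identical.
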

\begin{proof}
We use the Pochhammer symbol, defined by
\[
x_{(\ell)} := \frac{\Gamma(x + \ell)}{\Gamma(x)} = x(x+1) \cdots (x+\ell-1),
\]
and the elementary transformation, \( (1 - x)_{\ell} = (-1)^{\ell} \G{x}/\G{x - \ell} \) to write the right hand side of (\ref{sum1}) as
\begin{multline*}
\frac\pi{2a}\frac{\Gamma(n+a+1)\Gamma(a-n-\frac12)}{\Gamma(a-n)\Gamma(n+a+\frac32)}
\\ = \frac{\pi}{2} \frac{\G{n+a+1}}{\G{1+a}} \frac{\G{a}}{\G{a - n}}
\frac{\G{a + \frac12}}{\G{n + 1 + a + \frac12}} \frac{\G{a + \frac12 - (n +1)}}{\G{a + \frac12}}.
\end{multline*}
It is obvious that the above expression is a
rational function in $a$ and we can write it as
\begin{equation}
\label{rhs:1}
-\frac{\pi}{2} \frac{(1+a)_{(n)} (1-a)_{(n)}}{\left(\frac12 +
    a\right)_{(n+1)} \left(\frac12 - a\right)_{(n+1)}}.
\end{equation}
The rest of the proof of (\ref{sum1}) is partial fractions
decomposition of this rational function. Its poles are simple and are  located at half
integers with the residue of the pole at $a =m + 1/2$ being given by
\begin{align*}
&\frac{\pi}2 \frac{(1+a)_{(n)} (1-a)_{(n)}}{\left(\frac12 +
    a\right)_{(n+1)}}
\bigg\{\prod_{\ell=0}^{m-1} \frac12 - a + \ell \bigg\}^{-1}
\bigg\{\prod_{\ell=m+1}^{n} \frac12 - a + \ell \bigg\}^{-1} \bigg|_{a
  = m + \frac12} \\
&= \frac{\pi}{2} \frac{(m+\frac32)_{(n)}
  (-m+\frac12)_{(n)}}{(m+1)_{(n+1)}}
\frac{(-1)^{m}}{m!(n-m)!} \\
&= \frac{\pi}{2} \frac{\G{m + n + \frac32}}{\G{m + \frac32}}
\frac{\G{n-m+\frac12}}{\G{-m+\frac12}} \frac{\G{m+1}}{\G{m+n+2}}
\frac{(-1)^m}{m!(n-m)!} \\
&= \frac{\pi}{2m+1} \frac{\G{m + n + \frac32}}{\G{m + \frac12}}
\frac{\G{n-m+\frac12}}{\G{-m+\frac12}} \frac{\G{m+1}}{\G{m+n+2}}
\frac{(-1)^m}{m!(n-m)!} \\
&= \frac{1}{2m+1} \frac{\G{m+n+\frac32} \G{n-m+\frac12}}{\G{m+n+2}
  \G{n-m+1}},
\end{align*}
where we used the formula $\Gamma(z)\Gamma(1-z) = \pi/\sin(\pi z)$ to write
\[
\Gamma\left(m+\frac12\right)\Gamma\left(\frac12-m\right) =
\frac{\pi}{\sin(\pi(m+\frac12))} = (-1)^m \pi.
\]
Since (\ref{rhs:1}) is even in $a$, the residue of the pole at $a =
-m-1/2$ must differ from that of $a =
m+1/2$ by a minus sign.  We have identified all the poles and
their residues, and hence there exists a polynomial $f$ such
that
\begin{align*}
& \frac{\pi}{2} \frac{(1+a)_{(n)} (1-a)_{(n)}}{\left(\frac12 +
    a\right)_{(n+1)} \left(\frac12 - a\right)_{(n+1)}} \\
& = f(a) + \sum_{m=0}^n \frac{1}{2m+1} \frac{\G{m+n+\frac32}
  \G{n-m+\frac12}}{\G{m+n+2} \G{n-m+1}} \left[ \frac{1}{a -
    (m+\frac12)} - \frac{1}{a + (m+\frac12)} \right] \\
& = f(a) + \sum_{m=0}^n \frac{\G{m+n+\frac32}
  \G{n-m+\frac12}}{\G{m+n+2} \G{n-m+1}} \left[ \frac{4}{(2a)^2 -
    (2m+1)^2} \right].
\end{align*}
It remains to show that $f(a)$ is identically 0. But this is trivial,
since, multiplying both sides by $a^2$ and taking the limit as $a
\rightarrow \infty$, we expect the left-hand side to go to a non-zero
constant.  Equality can only be preserved under this procedure when
$f(a) = 0$.

The proof of (\ref{sum2}) is essentially identical, and we only record
the most salient maneuvers.
\begin{align}
\label{eq:4}
\frac{(2a+1)\pi}4\frac{\Gamma(n+a+1)\Gamma(a-n-\frac12)}{\Gamma(a-n+1)\Gamma(n+a+\frac52)}
&= -\frac{\pi}{2}\frac{(1+a)_{(n)} (-a)_{(n)}}{(\frac32 + a)_{(n+1)}
  (\frac12-a)_{(n+1)}}.
\end{align}
This is a rational function with poles at $a = -\frac32, -\frac52,
\ldots, -\frac32 -n$ and $a = \frac12, \frac32, \ldots, \frac12 + n$.
Since this rational function is invariant under the substitution $a
\mapsto -1 - a$, the residues at $-(2m+1)/2$ and $(2m-1)/2$ are equal
up to sign, the former being given by
\[
\frac{m}2 \frac{\G{n+1 -\frac{2m+1}{2}} \G{n +
    \frac{2m+1}{2}}}{\G{n+m+2} \G{n-m+2}}.
\]
The proof of (\ref{sum2}) follows by using these facts to write
(\ref{eq:4}) in partial fractions form and simplifying.

Finally, the first sum in \eqref{sum3} is equal to
\[
\frac\pi2 \lim_{a\to\infty}a\frac{\Gamma(n+a+1)\Gamma(a-n-\frac12)}{\Gamma(n+a+\frac32)\Gamma(a-n)} = \frac\pi2
\]
and the second one can be computed analogously.
\end{proof}

Now, we are ready to prove Theorem~\ref{skew-ortho}.

\begin{lemma}
Under the conditions of Theorem~\ref{skew-ortho}, \eqref{skew-ortho2} holds.
\end{lemma}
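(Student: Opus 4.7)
The strategy is to verify the six types of skew-inner products among the $\pi_k$ by expanding the polynomials in the Chebyshev basis supplied by Lemma~\ref{lem:poly-sum} and then applying Lemmas~\ref{lem:moments} and~\ref{lem:two-sums}.

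Step 1 (parity cancellation). By Lemma~\ref{lem:poly-sum}, $\pi_{2n}$ is a linear combination of $U_{2i}$ and $\pi_{2m+1}$ is a linear combination of $U_{2j+1}$. Lemma~\ref{lem:moments} says $\langle U_{k} \, | \, U_{\ell} \rangle = 0$ whenever $k$ and $\ell$ have the same parity. Hence $\langle \pi_{2n} \, | \, \pi_{2m} \rangle = 0$ and $\langle \pi_{2n+1} \, | \, \pi_{2m+1} \rangle = 0$ for all $n,m$, automatically.

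Step 2 (reduction to a single double sum). Expanding $\langle \pi_{2n} \, | \, \pi_{2m+1} \rangle$ via Lemmas~\ref{lem:poly-sum} and~\ref{lem:moments}, the factor $1 - (2j+2)^2/s^2$ appearing in $\pi_{2m+1}$ exactly cancels the denominator factor $s^2 - (2j+2)^2$ in $\langle U_{2i} \, | \, U_{2j+1} \rangle$, eliminating all $s$-dependence. After the dust settles one obtains
\[
\langle \pi_{2n} \, | \, \pi_{2m+1} \rangle = -\frac{4(n+\tfrac34)}{\pi^2} \sum_{i=0}^n \sum_{j=0}^m \Gamma_{2n,i} \, \Gamma_{2m+1,j} \, \frac{(2j+2)^2}{(2j+2)^2-(2i+1)^2}.
\]

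Step 3 (off-diagonal vanishing). Apply Lemma~\ref{lem:two-sums}: summing over $j$ first via \eqref{sum2} with $a=i$ yields a factor $\Gamma(i-m-\tfrac12)/\Gamma(i-m+1)$, whose denominator has a pole at every integer $i<m$. Hence if $m>n$ then $i\le n<m$ for every $i$ in the outer sum and the whole expression vanishes. Symmetrically, summing over $i$ first via \eqref{sum1} with $a=j+1$ introduces $1/\Gamma(j-n+1)$, which annihilates every term when $m<n$. Thus $\langle \pi_{2n} \, | \, \pi_{2m+1} \rangle = 0$ for $m \neq n$.

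Step 4 (diagonal evaluation). When $m=n$, the same pole analysis leaves only the single term $i=n$ (equivalently $j=n$). Substituting the closed form $\Gamma_{2n,n} = \sqrt{\pi}\,\Gamma(2n+\tfrac32)/\Gamma(2n+2)$, the identity $\Gamma(-\tfrac12) = -2\sqrt{\pi}$, and the telescoping $\Gamma(2n+\tfrac52)=(2n+\tfrac32)\Gamma(2n+\tfrac32)$, $\Gamma(2n+2)=(2n+1)\Gamma(2n+1)$, together with $(n+\tfrac34)/(2n+\tfrac32)=\tfrac12$, produces $\langle \pi_{2n} \, | \, \pi_{2n+1} \rangle = 1$. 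Antisymmetry of the skew-inner product then gives $\langle \pi_{2n+1} \, | \, \pi_{2n} \rangle = -1$, completing the verification of skew-orthonormality. The main obstacle is bookkeeping the cascade of Gamma-factor cancellations; once Lemma~\ref{lem:two-sums} is in hand, however, the off-diagonal vanishing is forced automatically by the pole structure of $\Gamma$, and only a single explicit arithmetic check remains.
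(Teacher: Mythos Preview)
Your proof is correct and follows essentially the same approach as the paper's: both arguments rely on Lemmas~\ref{lem:poly-sum}, \ref{lem:moments}, and~\ref{lem:two-sums}, and the vanishing of $1/\Gamma$ at non-positive integers is the key mechanism in each. The only difference is cosmetic: the paper tests $\pi_{2n}$ and $\pi_{2n+1}$ against individual Chebysh\"ev polynomials $U_{2j+1}$ and $U_{2j}$ (so the reduction to a single sum happens immediately), whereas you write out the full double sum for $\langle\pi_{2n}\,|\,\pi_{2m+1}\rangle$ and then collapse one index via \eqref{sum1} or \eqref{sum2}; the diagonal evaluation is likewise carried out with the roles of the two indices swapped, but the Gamma-function arithmetic is identical.
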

\begin{proof}
Let $\pi_n$ be given by \eqref{skew-ortho2}. It follows immediately from Lemmas~\ref{lem:poly-sum} and~\ref{lem:moments} that $\big \la \pi_{2n} \, | \, \pi_{2m} \big \ra = \big \la \pi_{2n+1} \, | \, \pi_{2m+1} \big \ra = 0$. Thus, to show that $\pi_n$'s are indeed skew-orthogonal, it sufficises to prove that
\begin{equation}
\label{goal1}
\big \la \pi_{2n} \, | \, U_{2j+1} \big \ra = \big \la U_{2j} \, | \, \pi_{2n+1} \big \ra =0, \qquad j\in\{0,\ldots,n-1\}.
\end{equation}
Using \eqref{Gammas}, we deduce from Lemmas~\ref{lem:poly-sum} and~\ref{lem:moments} applied with $n=2j+2$ and $m=2i+1$ that
\[
\big \la \pi_{2n} \, | \, U_{2j+1} \big \ra = \frac{4n+3}{2\pi}\frac{(2j+2)s^2}{s^2-(2j+2)^2} \sum_{i=0}^n\frac{4\Gamma_{2n,i}}{(2j+2)^2-(2i+1)^2}.
\]
Hence, the desired claim is a consequence of \eqref{sum1} applied with $a=j+1$ as $\frac1{\Gamma(j+1-n)}=0$. Furthermore, it follows again from Lemmas~\ref{lem:poly-sum} and~\ref{lem:moments} applied with $n=2i+2$ and $m=2j+1$ that
\[
\big \la U_{2j} \, | \, \pi_{2n+1} \big \ra = -\frac8\pi\frac1{2j+1}\sum_{i=0}^n \frac{(2i+2)^2\Gamma_{2n+1,i}}{(2i+2)^2-(2j+1)^2}.
\]
Equation \eqref{sum2} applied with $a=j$ yields that the above skew-inner product is $0$. It remains to verify that $\big \la \pi_{2n} \, | \, \pi_{2n+1} \big \ra =1$. It follows from \eqref{goal1} that
\begin{eqnarray*}
\big \la \pi_{2n} \, | \, \pi_{2n+1} \big \ra & = & \frac1{\sqrt\pi}\left(1-\frac{(2n+2)^2}{s^2}\right)\frac{\Gamma(2n+\frac32)}{\Gamma(2n+2)} \big \la \pi_{2n} \, | \, U_{2n+1} \big \ra \\
& = & \frac{2n+2}{\pi^{3/2}}\frac{\Gamma(2n+\frac52)}{\Gamma(2n+2)}\sum_{i=0}^n\frac{4\Gamma_{2n,i}}{(2n+2)^2-(2i+1)^2}.
\end{eqnarray*}
Upon applying \eqref{sum1} with $a=n+1$, the desired result follows.
\end{proof}

\subsection{Proof of Theorem~\ref{complex-exterior}}

Set $u:=\Phi(z)\overline{\Phi(w)}$. Then $|\Phi(z)\Phi(w)|^sK_{N,s}(z,w)/(\Phi^\prime(z)\overline{\Phi^\prime(w)})$ is equal to
\[
\sum_{n=0}^{N-1}\frac{s^2-(n+1)^2}{2\pi s}u^n\left(1+\mathcal O\big(\rho^{-2(n+1)}\big)\right)
\]
uniformly for $|\Phi(z)|,|\Phi(w)|\geq\rho>1$ by \eqref{KNs} and \eqref{Cheb2}.  It can be readily verified that
\begin{multline*}
\sum_{n=0}^{N-1}\frac{s^2-(n+1)^2}{2\pi s(s-N)}u^{n-N} = \sum_{m=1}^N\frac{s^2-N^2-(2N-1)(1-m)-(2-m)(1-m)}{2\pi s(s-N)}u^{-m} \\
= \frac{1+Ns^{-1}}{2\pi}\sum_{m=1}^Nu^{-m} - \frac{2Ns^{-1}-s^{-1}}{2\pi(s-N)}\left(\sum_{m=1}^Nu^{-m+1}\right)^\prime - \frac1{2\pi s(s-N)}\left(\sum_{m=1}^Nu^{-m+2}\right)^{\prime\prime},
\end{multline*}
which converges to
\[
\frac{1+\lambda}{2\pi}\frac1{u-1} + \frac{\lambda}{\pi}\frac{c^{-1}}{(u-1)^2}
\]
uniformly for $|\Phi(z)|,|\Phi(w)|\geq\rho>1$.  This proves \eqref{complex-out} since $\lambda=1$ when $c^{-1}>0$ and clearly
\[
\lim_{N\to\infty}\sum_{n=0}^{N-1}\frac{s^2-(n+1)^2}{2\pi s(s-N)}u^{n-N}\mathcal O\big(\rho^{-2(n+1)}\big) =0
\]
uniformly for $|\Phi(z)|,|\Phi(w)|\geq\rho$.

\subsection{Proof of Theorem~\ref{real-exterior}}

It is known, see \cite[Theorem~8.21.7]{Sz} or \cite[Theorem~2.11]{AY15}, that
\begin{equation}
\label{ultra}
C_n^{(\alpha)}\left(\frac z2\right) = \frac{(n+1)^{\alpha-1}}{\Gamma(\alpha)} \left(1+\mathcal O\left(\frac 1{n+1}\right)\right) \big(\Phi^\prime(z)\big)^\alpha\Phi^n(z)
\end{equation}
locally uniformly in $\overline\C\setminus[-2,2]$, where $\big(\Phi^\prime(z)\big)^\alpha$ is the principal branch. Set, for brevity,
\[
S_N(z,w) := \frac2{s-N}\frac{\sum_{j=0}^{J-1}\pi_{2j}(z)\pi_{2j+1}(w)}{\Phi^\prime(z)\Phi^\prime(w)\big(\Phi(z)\Phi(w)\big)^N}.
\]
Further, put $u=\Phi(z)\Phi(w)$. Then it follows from \eqref{skew-ortho2} and \eqref{ultra} that
\begin{multline*}
S_N(z,w) = \frac{\Phi(w)}{4\pi}\frac{\sqrt{\Phi^\prime(z)}}{\sqrt{\Phi^\prime(w)}}\sum_{j=0}^{J-1}\left(1+\mathcal O\left(\frac 1{2j+1}\right)\right)\frac{4j+3}s\frac{s^2-(2j+1)^2}{s(s-N)}u^{2j-N} -  \\ -\sqrt{\Phi^\prime(z)}\sqrt{\Phi^\prime(w)}\frac{\Phi(w)}{2\pi}\sum_{j=0}^{J-1}\frac{(4j+3)(2j+1)}{s^2(s-N)}\left(1+\mathcal O\left(\frac 1{2j+1}\right)\right)u^{2j-N}.
\end{multline*}
Upon replacing $j$ with $J-j$, the first sum above can be rewritten as
\[
\sum_{j=1}^J\frac{2N-4j+3}s\frac{s+N-2j+1}s\frac{s-N+2j-1}{s-N}\left(1+\mathcal O\left(\frac 1{N-2j+1}\right)\right)u^{-2j}.
\]
It is easy to see that the coefficient next to $u^{-2j}$ is positive and bounded by $Cj$ for some absolute constant $C$. Hence, these sums form a normal family in $|u|>1$ and to find their limit as $N\to\infty$ is enough to take the limits of the individual coefficients. Clearly, the second sum can be treated similarly. Therefore,
\[
\lim_{N\to\infty} S_N(z,w) = \frac{\lambda(1+\lambda)}{2\pi}\frac{\sqrt{\Phi^\prime(z)}}{\sqrt{\Phi^\prime(w)}}\frac{\Phi(w)}{u^2-1}\left(1+\frac1c\frac{u^2+1}{u^2-1}\right) - \sqrt{\Phi^\prime(z)}\sqrt{\Phi^\prime(w)}\frac{\lambda^2}{\pi c}\frac{\Phi(w)}{u^2-1},
\]
where the limit holds locally uniformly in $|u|>1$. Notice that $\Phi^\prime=\Phi^2/(\Phi^2-1)$. Hence, when $c=\infty$, we get that
\[
\lim_{N\to\infty} \big(S_N(z,w) - S_N(w,z) \big) = \frac{\lambda(1+\lambda)}{2\pi}\frac1{\sqrt{\Phi^2(z)-1}\sqrt{\Phi^2(w)-1}}\frac{\Phi(w)-\Phi(z)}{\Phi(z)\Phi(w)-1}.
\]
On the other hand, when $c<\infty$, it holds that $\lambda=1$ and therefore the above limit is equal to
\[
\frac1\pi\frac1{\sqrt{\Phi^2(z)-1}\sqrt{\Phi^2(w)-1}}\frac{\Phi(w)-\Phi(z)}{\Phi(z)\Phi(w)-1}\left(1+\frac{c^{-1}}{\Phi(z)\Phi(w)-1}\right).
\]
The last two formulas and the definition of $S_N(z,w)$ clearly yield  \eqref{real-out}.

\subsection{Proof of Theorem~\ref{matrix-exterior}}

\begin{lemma}
\label{2.3-1}
It holds that
\[
\left\{
\begin{array}{lll}
\epsilon(\phi\pi_{2n+1})(x) &=& -\int_{\xi\infty}^x\pi_{2n+1}(u)\phi(u)\upd u, \medskip \\
\epsilon(\phi\pi_{2n})(x) &=& -\int_{\xi\infty}^x\pi_{2n}(u)\phi(u)\upd u-\frac\xi2\Gamma_n(s),
\end{array}
\right.
\]
where $\xi=\pm1$ and
\[
\Gamma_n(s) := \int_{-\infty}^\infty\phi(x)\pi_{2n}(x)\upd x = \frac s2\left(n+\frac34\right)\frac{\Gamma(\frac s2+n+1)\Gamma(\frac s2-n-\frac12)}{\Gamma(\frac s2+n+\frac32)\Gamma(\frac s2-n)}.
\]
\end{lemma}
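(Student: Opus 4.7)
The plan is to unpack the definition of \(\epsilon\) on real arguments, exploit parity to obtain the two antiderivative identities, and then compute \(\Gamma_n(s)\) by combining the Chebysh\"ev expansion of \(\pi_{2n}\) from Lemma~\ref{lem:poly-sum} with the partial-fractions identity \eqref{sum1}.

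For \(x\in\R\) the first line of \eqref{eq:22} gives
\[
\epsilon(\phi\pi)(x) = \tfrac12\!\int_x^\infty\pi(u)\phi(u)\,\upd u - \tfrac12\!\int_{-\infty}^x\pi(u)\phi(u)\,\upd u,
\]
and splitting \(\int_{-\infty}^\infty\pi\phi\) either at \(+\infty\) or at \(-\infty\) recasts this as
\[
\epsilon(\phi\pi)(x) = \tfrac{\xi}{2}\!\int_{-\infty}^\infty\pi(u)\phi(u)\,\upd u - \int_{\xi\infty}^x\pi(u)\phi(u)\,\upd u,\qquad \xi=\pm1.
\]
Both \(\phi\) and the factor \(C^{(\alpha)}_{2n}(z/2)\) are even in \(z\), while \(C^{(\alpha)}_{2n+1}(z/2)\) is odd, so \(\pi_{2n+1}\) is odd and \(\pi_{2n}\) is even. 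Consequently \(\int_{-\infty}^\infty\pi_{2n+1}\phi = 0\) and the first identity of the lemma follows independently of the sign of \(\xi\). For \(\pi_{2n}\) the total integral equals \(\Gamma_n(s)\), and the residual \(-\tfrac{\xi}{2}\Gamma_n(s)\) is precisely the correction appearing in the second identity.

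To evaluate \(\Gamma_n(s)\) I would substitute the Chebysh\"ev expansion of Lemma~\ref{lem:poly-sum} and reduce, via evenness of \(U_{2i}\phi\) on the real line, to \(\int_0^\infty U_{2i}\phi\,\upd x\). Sending \(y\to\infty\) in \eqref{intU} with \(m=2i+1\) makes the \(\Phi^{-s\pm m}(y)\) terms vanish (the hypothesis \(s>N\ge 2n+1\ge 2i+1\) guarantees \(\Phi^{-s+m}(y)\to 0\)), producing
\[
\int_0^\infty U_{2i}(x)\phi(x)\,\upd x = \frac{2s^2}{(2i+1)\bigl(s^2-(2i+1)^2\bigr)}.
\]
Doubling and substituting back cancels the factor \(2i+1\) and yields
\[
\Gamma_n(s) = \frac{(4n+3)s^2}{2\pi}\sum_{i=0}^n\frac{\Gamma_{2n,i}}{s^2-(2i+1)^2}.
\]

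The final step is to invoke \eqref{sum1} with \(a=s/2\); the sum collapses to
\(\frac{\pi}{4s}\frac{\Gamma(\tfrac{s}{2}+n+1)\Gamma(\tfrac{s}{2}-n-\tfrac12)}{\Gamma(\tfrac{s}{2}+n+\tfrac32)\Gamma(\tfrac{s}{2}-n)}\),
and using \((4n+3)/8=(n+\tfrac34)/2\) produces the advertised closed form for \(\Gamma_n(s)\). The argument is essentially bookkeeping; the only place where care is needed is matching the half-integer Pochhammer shifts when applying \eqref{sum1} at \(a=s/2\), which is where a factor of two can most easily be dropped.
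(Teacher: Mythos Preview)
Your approach is exactly the paper's: parity plus the definition \eqref{eq:22} for the antiderivative identities, then \eqref{skew-ortho1}, \eqref{intU}, and \eqref{sum1} with \(a=s/2\) for the closed form of \(\Gamma_n(s)\). One slip: in your displayed recasting of \(\epsilon(\phi\pi)(x)\) the coefficient of the total integral should be \(-\tfrac{\xi}{2}\), not \(+\tfrac{\xi}{2}\) (you silently correct this in the next sentence, so the conclusion is right).
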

\begin{proof}
The second equality in the definition of $\Gamma_n(s)$ follows from \eqref{skew-ortho1}, \eqref{intU}, and \eqref{sum1} applied with $a=s/2$. Moreover, $\int_{-\infty}^\infty\phi(x)\pi_{2n+1}(x)\upd x=0$ since the integrand is an odd function. The claim then is a simple consequence of \eqref{eq:22}.
\end{proof}

\begin{lemma}
\label{2.3-2}
Let $c$ be given by \eqref{constants}. Assuming $c<\infty$, it holds that
\[
\lim_{N\to\infty}\Phi^{-N}(z)\sum_{j=0}^{J-1} \Gamma_j(s)\pi_{2j+1}(z) = \frac1{\sqrt\pi}\frac{\Gamma(\frac{c+1}2)}{\Gamma(\frac c2)}\frac{\Phi^\prime(z)}{\sqrt{\Phi^2(z)-1}}
\]
locally uniformly for $z\not\in [-2,2]$, where $N=2J$. When $c=\infty$, the limit above is  convergent if it is additionally normalized by $\sqrt{s-N}$.
\end{lemma}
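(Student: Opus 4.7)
The plan is to expand each summand using the ultraspherical asymptotic \eqref{ultra} together with Stirling-type ratios for the gamma function, and then identify the resulting series in closed form via the hypergeometric ODE. The first step is to reindex by $k:=J-1-j$, so that $2j+1=N-1-2k$ and $\Phi^{-N}(z)\Phi^{2j+1}(z)=\Phi^{-(2k+1)}(z)$. Since $|\Phi(z)|>1$ off $[-2,2]$, this factor decays geometrically in $k$, localizing the dominant mass to $k=O(1)$ and enabling a dominated-convergence argument.

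In the main case $c<\infty$ (which forces $\lambda=1$), both terms in \eqref{skew-ortho2} are of order $N^{-3/2}$ and must be kept. Using $1-(N-1-2k)^2/s^2\sim 2(c+1+2k)/N$ together with \eqref{ultra} gives, up to $(1+o(1))$,
\[
\pi_{N-1-2k}(z)\sim\frac{2\Phi^{N-1-2k}(z)}{\sqrt\pi\,N^{3/2}}\Bigl[(c+1+2k)\sqrt{\Phi^\prime(z)}-\bigl(\Phi^\prime(z)\bigr)^{3/2}\Bigr],
\]
and applying $\Gamma(x)/\Gamma(x+1/2)\sim x^{-1/2}$ to the $(s+N)/2$-ratio in Lemma~\ref{2.3-1} yields $\Gamma_{J-1-k}(s)\sim\tfrac{N^{3/2}}{4}\cdot\tfrac{\Gamma((c+1)/2+k)}{\Gamma(c/2+1+k)}$. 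After the $N^{3/2}$ factors cancel, setting $r:=\Phi^{-2}(z)$ and using $1-\Phi^\prime=-1/(\Phi^2-1)$ and $\sqrt r/(1-r)^{3/2}=\Phi^\prime/\sqrt{\Phi^2-1}$, the pointwise limit of the series reduces to
\[
\frac{\sqrt r}{2\sqrt\pi\,(1-r)^{3/2}}\,\Bigl[(1-r)\bigl(c\,S_1(r)+2r\,S_1^\prime(r)\bigr)-r\,S_1(r)\Bigr],
\]
where $S_1(r):=\sum_{k\ge0}\frac{\Gamma(c/2+k+1/2)}{\Gamma(c/2+k+1)}r^k=\frac{\Gamma((c+1)/2)}{\Gamma(c/2+1)}\,{}_2F_1\!\bigl(\tfrac{c+1}{2},1;\tfrac{c}{2}+1;r\bigr)$.

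The crux is to show that the bracketed quantity $H(r):=(1-r)(cS_1+2rS_1^\prime)-rS_1$ is the constant $2\Gamma((c+1)/2)/\Gamma(c/2)$. Since $S_1(0)=\Gamma((c+1)/2)/\Gamma(c/2+1)$, one has $H(0)=cS_1(0)=2\Gamma((c+1)/2)/\Gamma(c/2)$. Differentiating $H$ and substituting the hypergeometric ODE $r(1-r)S_1^{\prime\prime}+\bigl(\tfrac c2+1-(\tfrac c2+\tfrac 52)r\bigr)S_1^\prime-\tfrac{c+1}{2}S_1=0$ to eliminate the $2r(1-r)S_1^{\prime\prime}$ term shows that all remaining contributions cancel, giving $H^\prime\equiv 0$. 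Plugging $H\equiv 2\Gamma((c+1)/2)/\Gamma(c/2)$ back into the preceding display produces exactly the claimed limit.

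Two items remain. First, to justify the interchange of limit and summation one uses the $O(1/(n+1))$ error in \eqref{ultra} together with the bounds $(c+1+2k)\le(c+1)(k+1)$ and $|\Phi(z)|^{-2k}\le\rho^{-2k}$ uniformly on compact subsets of $\{|\Phi|\ge\rho>1\}$, so that the summand is dominated by a convergent $C_z(k+1)\rho^{-2k}$ series. Second, for $c=\infty$ the ratio $\Gamma((c+1)/2+k)/\Gamma(c/2+1+k)\sim\sqrt{2/(s-N)}$ rather than being bounded; dividing the entire sum by $\sqrt{s-N}$ restores the finite scaling above, and the analogous term-by-term analysis (with the leading $(1-\lambda^2)/\sqrt{\pi N}\cdot\sqrt{\Phi^\prime(z)}$ piece of $\pi_{N-1-2k}(z)$ surviving when $\lambda<1$, respectively the subleading piece becoming dominant when $\lambda=1$) together with the geometric sum in $k$ yields a limit of the form $\tfrac{\sqrt{\lambda(1+\lambda)}}{2\sqrt\pi}\cdot\Phi^\prime(z)/\sqrt{\Phi^2(z)-1}$, interpolating smoothly into the $c<\infty$ formula as $c\to\infty$. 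The main obstacle throughout is the ODE identification in the $c<\infty$ case, which is routine but essential for extracting the closed form.
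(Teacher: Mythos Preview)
Your argument for the main case $c<\infty$ is correct and parallels the paper's proof up to the point where you obtain the two-term series in $\Phi^{-2k-1}(z)$; the paper arrives at exactly the same expression after the same reindexing and application of \eqref{ultra}. The genuine difference is in how the closed form is extracted. The paper multiplies the second series by the geometric expansion $\Phi^\prime=\sum_{j\ge0}\Phi^{-2j}$ and uses the telescoping identity
\[
\frac12\sum_{k=0}^{j}\frac{\Gamma(\tfrac{c+1}2+k)}{\Gamma(\tfrac c2+1+k)}=\frac{\Gamma(\tfrac{c+3}2+j)}{\Gamma(\tfrac c2+1+j)}-\frac{\Gamma(\tfrac{c+1}2)}{\Gamma(\tfrac c2)},
\]
checked by induction, so that the difference of the two series collapses to a constant times the geometric series. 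You instead recognize $S_1$ as a ${}_2F_1$ and invoke its ODE to show $H\equiv\mathrm{const}$. Both routes are clean; yours generalizes more readily to other gamma-ratio series without guessing the right telescoping, while the paper's is entirely elementary and avoids hypergeometric machinery.

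One small slip: in your $c=\infty$, $\lambda=1$ discussion, the parenthetical ``the subleading piece becoming dominant'' is wrong. The coefficient $1-(N-1-2k)^2/s^2\sim 2(s-N)/N$ does not vanish, so it is still the $C^{(1/2)}$-piece of $\pi_{N-1-2k}$ that dominates (the $C^{(3/2)}$-piece is smaller by a factor $1/(s-N)$ after multiplying by $\Gamma_{J-1-k}(s)/\sqrt{s-N}$ and would give zero on its own). Your stated limit $\tfrac{\sqrt{\lambda(1+\lambda)}}{2\sqrt\pi}\,\Phi^\prime/\sqrt{\Phi^2-1}$ is nonetheless correct for all $\lambda\in[0,1]$; only the supporting sentence needs fixing.
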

\begin{proof}
By changing the index of summation to $J-1-j$ and using \eqref{skew-ortho2} and \eqref{ultra}, we see that we need to compute the limit of
\begin{multline*}
\frac{\sqrt{\Phi^\prime(z)}}{\sqrt\pi}\sum_{j=0}^{J-1}\left(1+\mathcal O\left(\frac1{J-j}\right)\right)\frac{\sqrt{N-2j}}{s}\frac{\Gamma(\frac s2+J-j)\Gamma(\frac s2-J+j+\frac32)}{\Gamma(\frac s2+J-j-\frac12)\Gamma(\frac s2-J+j+1)}\Phi^{-2j-1}(z) - \\
- \frac{(\Phi^\prime(z))^{3/2}}{2\sqrt\pi}\sum_{j=0}^{J-1}\left(1+\mathcal O\left(\frac1{J-j}\right)\right)\frac{(N-2j)^{3/2}}{s}\frac{\Gamma(\frac s2+J-j)\Gamma(\frac s2-J+j+\frac12)}{\Gamma(\frac s2+J-j+\frac12)\Gamma(\frac s2-J+j+1)}\Phi^{-2j-1}(z).
\end{multline*}
Straightforward estimates show that the above sums form normal families and therefore the limiting function can be obtained by simply evaluating the limits of the coefficients. That is, the above expression converges to
\[
\frac{\sqrt{\Phi^\prime(z)}}{\sqrt\pi}\sum_{j=0}^\infty\frac{\Gamma(\frac{c+3}2+j)}{\Gamma(\frac c2+1+j)}\Phi^{-2j-1}(z) - \frac{(\Phi^\prime(z))^{3/2}}{2\sqrt\pi}\sum_{j=0}^\infty\frac{\Gamma(\frac{c+1}2+j)}{\Gamma(\frac c2+1+j)}\Phi^{-2j-1}(z)
\]
locally uniformly for $z\not\in [-2,2]$. Since $\Phi^\prime=\Phi^2/(\Phi^2-1)=\sum_{j=0}^\infty\Phi^{-2j}$, we get that
\begin{multline*}
\frac{\Phi^\prime(z)}2\sum_{j=0}^\infty\frac{\Gamma(\frac{c+1}2+j)}{\Gamma(\frac c2+1+j)}\Phi^{-2j-1}(z) = \sum_{j=0}^\infty\left(\frac12\sum_{k=0}^j\frac{\Gamma(\frac{c+1}2+k)}{\Gamma(\frac c2+1+k)}\right)\Phi^{-2j-1}(z) = \\
= \sum_{j=0}^\infty\left(\frac{\Gamma(\frac{c+3}2+j)}{\Gamma(\frac c2+1+j)}-\frac{\Gamma(\frac{c+1}2)}{\Gamma(\frac c2)}\right)\Phi^{-2j-1}(z),
\end{multline*}
where the last equality can be shown by induction. Using the identity $\Phi^\prime=\Phi^2/(\Phi^2-1)$ once more, we arrive at the first claim of the lemma. The second claim can be shown analogously.
\end{proof}

\begin{lemma}
\label{2.3-3}
Under the conditions of Theorem~\ref{matrix-exterior}, \eqref{K-exterior} holds.
\end{lemma}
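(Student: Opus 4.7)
The plan is to reduce each of the three limits in \eqref{K-exterior} to an integral either of $\kappa_{N,s}(z,w)$ itself or of the polynomial moment $\phi(w)\sum_j\Gamma_j(s)\pi_{2j+1}(w)$, and then to apply Theorem~\ref{real-exterior} or Lemma~\ref{2.3-2} under dominated convergence.

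I would start with $\kappa_{N,s}\epsilon(z,y)$. Applying $\epsilon$ in the second slot of \eqref{orto-kernel} via Lemma~\ref{2.3-1}, and using that $\phi\pi_{2j+1}$ is odd (hence has vanishing integral against $\upd\mu_\R$) while $\int\phi\pi_{2j}\upd\mu_\R=\Gamma_j(s)$, yields the representation
\[
\kappa_{N,s}\epsilon(z,y) = -\int_{\sgn(y)\infty}^y\kappa_{N,s}(z,w)\upd w + \sgn(y)\phi(z)\sum_{j=0}^{J-1}\Gamma_j(s)\pi_{2j+1}(z).
\]
The key point is that, with the choice $\xi=\sgn(y)$, the contour in the first integral lies entirely in $\R\setminus[-2,2]$, where Theorem~\ref{real-exterior} supplies a locally uniform limit of $\kappa_{N,s}(z,w)$. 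Multiplying both sides by $(|\Phi(z)|/\Phi(z))^N$, the boundary sum is handled by Lemma~\ref{2.3-2} together with the identity $(|\Phi(z)|/\Phi(z))^N\Phi^N(z)\phi(z)=|\Phi(z)|^{N-s}\to|\Phi(z)|^{-c}$, giving exactly the contribution to $-(\partial_xF)(z,y)$ coming from the single-integral piece of $F$. The $w$-integral is handled by dominated convergence: combining $\phi(w)=|\Phi(w)|^{-s}$ with the polynomial bound $|\pi_j(w)|\lesssim|\Phi(w)|^j$ produces a majorant of order $|w|^{-(c+1)}$ on the contour, uniformly in $N$, so the limit passes inside. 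The substitution $v=\Phi(w)$ converts the $w$-integral to a $v$-integral over $[\sgn(y)\infty,\Phi(y)]$, and comparison with Leibniz differentiation of the double-integral part of $F$ identifies this with the remaining part of $-(\partial_xF)(z,y)$.

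The second identity for $\epsilon\kappa_{N,s}(x,w)$ follows from antisymmetry. Since $\kappa_{N,s}(z,w)=-\kappa_{N,s}(w,z)$, one has $\epsilon\kappa_{N,s}(x,w) = -\kappa_{N,s}\epsilon(w,x)$; and $F$ is itself antisymmetric under $x\leftrightarrow y$ (the factor $(u-v)/(uv-1)$ in the double integral is antisymmetric, and the single-integral term of $F$ is manifestly so). Combining these observations with the first limit produces the second. For $\epsilon\kappa_{N,s}\epsilon(x,y)$, I would iterate the first step, applying $\epsilon_x$ to the representation above. Using Lemma~\ref{2.3-1} once more together with the elementary identity $\int\kappa_{N,s}(u,w)\upd u=2\phi(w)\sum_j\Gamma_j(s)\pi_{2j+1}(w)$, the expression decomposes as a double integral of $\kappa_{N,s}(u,w)$ over $[\sgn(x)\infty,x]\times[\sgn(y)\infty,y]$, plus two boundary terms of the form $\pm\sgn(\cdot)\sum_j\Gamma_j(s)\int_{\sgn(\cdot)\infty}^{\cdot}\phi\pi_{2j+1}$. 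Dominated convergence with the two-variable change $u\mapsto\Phi(u)$, $w\mapsto\Phi(w)$ in the double integral produces the first term of $F(x,y)$, while Lemma~\ref{2.3-2} combined with one change of variable in each single integral produces the second.

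The only genuine obstacle is the bookkeeping of signs and normalizations: the various $\sgn(\cdot)$ factors, the parity of $N$ used in writing $\Phi(w)^N/|\Phi(w)|^s=|\Phi(w)|^{N-s}$ for real $w$ outside $[-2,2]$, and the branch convention for $\sqrt{u^2-1}$ on $\C\setminus[-1,1]$. Once these are fixed consistently, everything reduces to routine dominated-convergence arguments powered by Theorem~\ref{real-exterior} and Lemma~\ref{2.3-2}.
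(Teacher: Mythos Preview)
Your proposal is correct and follows essentially the same route as the paper: use Lemma~\ref{2.3-1} to rewrite each $\epsilon$-operator as an integral over a half-line in $\R\setminus[-2,2]$ plus a boundary term involving $\Gamma_j(s)$, then invoke Theorem~\ref{real-exterior} for the integral of $\kappa_{N,s}$ and Lemma~\ref{2.3-2} for the boundary sums, finishing with the change of variables $u\mapsto\Phi(u)$ and the observation that $(|\Phi(u)|/\Phi(u))^N=1$ for real $u\notin[-2,2]$ and $N$ even. The paper begins with $\epsilon\kappa_{N,s}\epsilon(x,y)$ and declares the other two limits ``similar,'' whereas you start from $\kappa_{N,s}\epsilon(z,y)$ and derive the rest by antisymmetry and iteration; you are also more explicit about the dominated-convergence majorant, which the paper leaves implicit.
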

\begin{proof}
For $x,y\in\R$, it holds by \eqref{orto-kernel} and Lemma~\ref{2.3-1} that
\begin{multline*}
\epsilon\kappa_{N,s}\epsilon(x,y) = \int_{\xi_x\infty}^x\int_{\xi_y\infty}^y\kappa_{N,s}(u,v)\upd u\upd v \\ +\left(\xi_x\int_{\xi_y\infty}^y-\xi_y\int_{\xi_x\infty}^x\right)\sum_{j=0}^{J-1}\Gamma_j(s)\pi_{2j+1}(u)\phi(u)\upd u,
\end{multline*}
where $\xi_u=\sgn(u)$, $u\in\R\setminus\{0\}$. The first limit in \eqref{K-exterior} now follows from \eqref{real-out}, Lemma~\ref{2.3-2}, the change of variables $\Phi(u)\mapsto u$ and $\Phi(v)\mapsto v$, and upon observing that $\big(|\Phi(u)|/\Phi(u)\big)^N=1$ for $u\in\R\setminus[-2,2]$ as $\Phi(u)$ is real for such $u$ and $N$ is even. The other two limits can be obtained similarly.
\end{proof}

\subsection{Proof of Proposition~\ref{prop:phi1}}

Given a function \( f \) defined in \( \C\setminus[-2,2] \), we denote its traces on \( (-2,2) \) by
\[
f_\pm(x) := \lim_{\epsilon\to0^+}f(x\pm\mathrm i\epsilon), \quad x\in(-2,2).
\]
Set $x_{N,a}:=x+a/(N\omega(x))$. For $\pm\im(a)\geq 0$, we have that
\begin{eqnarray}
(x_{N,a}^2-4)^{1/2} &=& (x^2-4)_\pm^{1/2} + \frac a{N\omega(x)}\frac{2+a/(N\omega(x))}{(x_{N,a}^2-4)^{1/2}+(x^2-4)_\pm^{1/2}} \nonumber \\
\label{2-0}
&=& (x^2-4)_\pm^{1/2} + \frac{a+\mathcal O\big(N^{-1}\big)}{N\omega(x)}
\end{eqnarray}
locally uniformly for $x\in(-2,2)$ and $a\in\C$. Hence, since $(4-x^2)^{1/2} = \mp\mathrm i(x^2-4)_\pm^{1/2}$, we have that
\begin{equation}
\label{2-1}
\Phi(x_{N,a})  =  \Phi_\pm(x) +\frac {a+\mathcal O\big(N^{-1}\big)}{N\omega(x)} =  \Phi_\pm(x)\left(1 \mp \frac{\mathrm ia+\mathcal O\big(N^{-1}\big)}N\right)
\end{equation}
for \( \pm\im(a)>0 \) locally uniformly for $x\in(-2,2)$ and $a\in\C$. Since $2\Phi^{-1}(z)=z-\sqrt{z^2-4}$ and $(x^2-4)_+^{1/2}+(x^2-4)_-^{1/2}\equiv0$ on $(-2,2)$, it holds that
\begin{equation}
\label{2-2}
\Phi^{-1}(x_{N,a})  = \Phi_\mp(x)\left(1 \pm \frac{\mathrm ia+\mathcal O\big(N^{-1}\big)}N\right)
\end{equation}
for \( \pm\im(a)>0 \) from which the claim of the proposition easily follows.

\subsection{Proof of Theorem~\ref{complex-bulk}}

\begin{lemma}
\label{2.5-1}
Let \( d \) be a non-negative integer, \( \{f_n\} \) be a sequence of positive numbers such that \( \lim_{n\to\infty} f_nn^{-d}=1 \), and $\alpha$ be a real constant. Then
\[
\lim_{N\to\infty} \sum_{n=0}^{N-1}\frac{f_n}{N^{d+1}}\left(1 + \frac\eta N\right)^{n+\alpha} = \int_0^1 t^de^{\eta t}\upd t
\]
uniformly for \( \eta \) on compact subsets of \( \C \), where we take the principal $\alpha$-root. The claim remains valid if \( \eta \) is replaced by \( \eta + \epsilon_N \), where \( \epsilon_N \to 0 \) as \( N\to \infty \).
\end{lemma}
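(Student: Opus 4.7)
The proof is fundamentally a Riemann sum argument: rewrite the summand as $(n/N)^d e^{\eta n/N} \cdot (1/N)$ times two correction factors (one coming from $f_n/n^d \to 1$, one from $(1+\eta/N)^{n+\alpha} \to e^{\eta n/N}$), and then invoke the standard Riemann sum convergence of $\sum_n (n/N)^d e^{\eta n/N}/N$ to $\int_0^1 t^d e^{\eta t}\upd t$.

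The plan is to execute this in three stages. First, I will control the exponential factor: writing $\log(1+\eta/N) = \eta/N + \mathcal O(1/N^2)$ uniformly for $\eta$ in a compact set $K\subset \C$, I get
\[
(1+\eta/N)^{n+\alpha} = \exp\bigl((n+\alpha)(\eta/N + \mathcal O(N^{-2}))\bigr) = e^{\eta n/N}\bigl(1+\mathcal O(N^{-1})\bigr)
\]
uniformly for $0 \le n \le N$ and $\eta\in K$ (the $\alpha$-shift and the quadratic correction each contribute $\mathcal O(N^{-1})$). Second, given $\epsilon>0$, choose $n_0$ so that $|f_n/n^d -1|<\epsilon$ for all $n\ge n_0$. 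The ``head'' sum $\sum_{n<n_0}$ is bounded by $C n_0 / N^{d+1} \to 0$ since the $f_n$ and $(1+\eta/N)^{n+\alpha}$ are uniformly bounded on $K$. For the ``tail'' $n_0\le n \le N-1$, I substitute $f_n = n^d(1+a_n)$ with $|a_n|<\epsilon$, giving
\[
\sum_{n=n_0}^{N-1}\frac{f_n}{N^{d+1}}\left(1+\frac{\eta}{N}\right)^{n+\alpha} = \bigl(1+\mathcal O(N^{-1})\bigr)\sum_{n=n_0}^{N-1}(1+a_n)\left(\frac{n}{N}\right)^d e^{\eta n/N}\frac1N.
\]

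Third, since $t\mapsto t^d e^{\eta t}$ is continuous on $[0,1]$ (uniformly in $\eta\in K$), the plain Riemann sum $\sum_{n=0}^{N-1}(n/N)^d e^{\eta n/N}/N$ converges uniformly in $\eta\in K$ to $\int_0^1 t^d e^{\eta t}\upd t$. The ``error'' piece $\sum_{n=n_0}^{N-1} a_n (n/N)^d e^{\eta n/N}/N$ is bounded in absolute value by $\epsilon\cdot \sup_{\eta\in K}\int_0^1 t^d|e^{\eta t}|\upd t + o(1)$. Combining these,
\[
\limsup_{N\to\infty}\left|\sum_{n=0}^{N-1}\frac{f_n}{N^{d+1}}\left(1+\frac{\eta}{N}\right)^{n+\alpha} - \int_0^1 t^d e^{\eta t}\upd t\right| \le C_K \epsilon
\]
uniformly in $\eta\in K$; letting $\epsilon\to 0$ finishes the main claim. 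For the perturbation $\eta \mapsto \eta + \epsilon_N$, I note that if $K$ is compact then $\{\eta+\epsilon_N : \eta\in K, N\ge 1\}$ lies in a slightly larger compact set $K'$, so the above bound applies uniformly; together with the continuous dependence of $\int_0^1 t^d e^{(\eta+\epsilon_N)t}\upd t$ on $\epsilon_N$ (via dominated convergence), this gives the same limit.

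No step poses a serious obstacle; the main bookkeeping is simply to keep the two error sources ($(1+\eta/N)^{n+\alpha}$ versus $e^{\eta n/N}$, and $f_n/n^d$ versus $1$) decoupled by pulling out the $(1+\mathcal O(N^{-1}))$ factor globally and by splitting at a cutoff $n_0$ chosen after fixing $\epsilon$. Everything else is the classical observation that uniformly continuous functions on $[0,1]$ are uniformly approximated by their left Riemann sums.
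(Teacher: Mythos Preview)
Your proof is correct but follows a genuinely different route from the paper's. The paper first proves the claim for the specific sequence $f_n^* := (n+d)(n+d-1)\cdots(n+1)$ by recognizing that
\[
\sum_{n=0}^{N-1}\frac{f_n^*}{N^{d+1}}\left(1+\frac{\eta}{N}\right)^{n+\alpha}
= \left(1+\frac{\eta}{N}\right)^{\alpha}\frac{\upd^d}{\upd\eta^d}\left(\frac{(1+\eta/N)^{N+d}-(1+\eta/N)^d}{\eta}\right),
\]
i.e., the sum is exactly the $d$-th derivative of a finite geometric series; the limit then follows from the locally uniform convergence $(1+\eta/N)^N\to e^\eta$ together with $\frac{\upd^d}{\upd\eta^d}\bigl((e^\eta-1)/\eta\bigr)=\int_0^1 t^d e^{\eta t}\,\upd t$. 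The general case is then deduced by writing $f_n = f_n^*(1+\delta_n)$ and splitting the sum at a growing cutoff $M_N$ with $M_N\to\infty$, $M_N/N\to 0$. By contrast, you bypass the algebraic identity entirely: you approximate $(1+\eta/N)^{n+\alpha}$ by $e^{\eta n/N}$ with a uniform $\mathcal O(N^{-1})$ error and read the resulting sum directly as a Riemann sum for $\int_0^1 t^d e^{\eta t}\,\upd t$, splitting at a \emph{fixed} cutoff $n_0$ chosen after $\epsilon$. Your argument is more elementary and applies in one stroke to any $f_n$ with $f_n n^{-d}\to 1$; the paper's approach, on the other hand, yields an exact closed form at the special sequence $f_n^*$ before perturbing, which has a certain structural appeal but is not otherwise exploited.
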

\begin{proof}
Set \( f_n^* := (n+d)\cdots(n+1) \), where \( f_n^*=1 \) if \( d=0 \). Then it holds that
\[
\sum_{n=0}^{N-1}\frac{f_n^*}{N^{d+1}}\left(1 + \frac\eta N\right)^{n+\alpha} = \left(1 + \frac\eta N\right)^\alpha \frac{\upd^d }{\upd \eta^d} \left(\frac{\left(1+\frac\eta N\right)^{N+d}-\left(1+\frac\eta N\right)^d}{\eta}\right).
\]
The desired limit then is equal to
\[
\frac{\upd^d }{\upd \eta^d} \left(\frac{e^{\eta}-1}{\eta}\right) = \frac{\upd^d }{\upd \eta^d} \left(\int_0^1 e^{\eta t}\upd t\right) = \int_0^1 t^de^{\eta t}\upd t
\]
uniformly for \( \eta \) on compact subsets of \( \C \). This proves the lemma with \( f_n=f_n^* \).

Now, write \( f_n=f_n^*(1+\delta_n) \), where, clearly, \( \delta_n\to0 \) as \( n\to\infty\). Further, let \( \{M_N\} \) be a sequence such that \( M_N\to\infty \) and \( M_N/N \to 0\) as \( N\to\infty \). Finally, set \( \delta_N^* := \min\{\delta_n:M_N\leq n\leq N\} \) and \( \delta:=\max_n\delta_n \). Then for \( |\eta|\leq C \), we have that
\begin{multline*}
\left|\sum_{n=0}^{N-1}\frac{f_n^*\delta_n}{N^{d+1}}\left(1 + \frac\eta N\right)^n\right| \leq \delta N^{-1}\sum_{n=0}^{M_N-1}(1+C/N)^n + \delta_N^*N^{-1}\sum_{n=0}^{N-1}(1+C/N)^n \\ \leq \delta C^{-1}\big((1+C/N)^{M_N}-1\big) + \delta_N^* C^{-1}\big((1+C/N)^N-1\big) = o(1)
\end{multline*}
as \( N\to\infty \) since \( (1+C/N)^{M_N} - 1 \to 0 \), \( \delta_N^* \to 0 \), and \( (1+C/N)^N\leq e^C \) in this case. This finishes the proof of the lemma.
\end{proof}

\begin{lemma}
\label{2.5-2}
Let \( d \) and \( \{f_n\} \) be as in Lemma~\ref{2.5-1}. Further, let  \( \{\epsilon_N\} \) be a sequence such that \( |\epsilon_N|\leq C/N \) for all \( N\in\N \). Then
\[
\lim_{N\to\infty} N^{-(d+1)}\sum_{n=0}^{N-1}f_n(\eta+\epsilon_N)^n = 0
\]
uniformly for \( \eta \) on compact subsets of \( \T\setminus\{1\} \) and \( C \) on compact subsets of \( [0,\infty) \).
\end{lemma}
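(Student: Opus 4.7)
The plan is to split $f_n = n^d + r_n$ with $r_n = o(n^d)$ and to treat the two pieces separately: the remainder by crude absolute-value estimates, and the main sum $T_N(\zeta):=\sum_{n=0}^{N-1}n^d\zeta^n$ via the explicit formula obtained by applying the Euler operator $\Delta:=\zeta\partial_\zeta$ to the closed form of the geometric sum. Two uniform bounds will drive everything: $|\eta+\epsilon_N|^n\leq (1+C/N)^n\leq e^C$ for $0\leq n\leq N$, and $|1-\eta-\epsilon_N|\geq \delta$ with $\delta:=\tfrac12\min_{\eta\in K}|1-\eta|>0$, valid once $N$ is large enough in terms of the compact set $K\subset\T\setminus\{1\}$ and the compact range of $C$.

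For the remainder, I would fix $\varepsilon>0$, choose $M$ with $|r_n|\leq \varepsilon n^d$ for all $n\geq M$, and estimate
\[
N^{-(d+1)}\bigg|\sum_{n=0}^{N-1} r_n (\eta+\epsilon_N)^n\bigg| \leq e^C N^{-(d+1)}\bigg(\sum_{n=0}^{M-1}|r_n| + \varepsilon\sum_{n=M}^{N-1}n^d\bigg) = O\big(N^{-(d+1)}\big)+O(\varepsilon),
\]
which can be made arbitrarily small, uniformly in $\eta\in K$ and in $C$ on the given compact set. This reduces the claim to showing $N^{-(d+1)}T_N(\eta+\epsilon_N)\to 0$ uniformly.

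For the main sum, the identity $\Delta^d(\zeta^n)=n^d\zeta^n$ gives $T_N(\zeta)=\Delta^d\bigl[(1-\zeta^N)/(1-\zeta)\bigr]$, and since $\Delta$ is a derivation the Leibniz rule produces
\[
T_N(\zeta)=\sum_{k=0}^d\binom{d}{k}\,\Delta^k(1-\zeta^N)\cdot\Delta^{d-k}\bigl(1/(1-\zeta)\bigr),
\]
with $\Delta^k(1-\zeta^N)=-N^k\zeta^N$ for $k\geq 1$ and $\Delta^{d-k}\bigl(1/(1-\zeta)\bigr)$ a rational function of the form $P_{d-k}(\zeta)/(1-\zeta)^{d-k+1}$ with $\deg P_{d-k}\leq d-k$, hence bounded by a constant depending only on $d$ times $|1-\zeta|^{-(d-k+1)}$ on the disc $|\zeta|\leq 1+C/N$. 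Combining with the two uniform bounds above yields $|T_N(\eta+\epsilon_N)|=O(N^d)$, so $N^{-(d+1)}|T_N(\eta+\epsilon_N)|=O(1/N)\to 0$ with the desired uniformity. The main subtlety is the uniform book-keeping: the threshold $N_0$ beyond which $|1-\eta-\epsilon_N|\geq \delta$ must depend only on $K$ and on the compact range of $C$, not on individual $\eta$ and $C$, which is straightforward but needs care.
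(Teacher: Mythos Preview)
Your argument is correct and follows essentially the same route as the paper. The paper also differentiates the closed form of the finite geometric series and bounds the result term by term via Leibniz; the only cosmetic difference is that the paper uses the rising factorial $f_n^*=(n+1)\cdots(n+d)$ together with the ordinary derivative $\partial_\eta$, whereas you use $n^d$ together with the Euler operator $\zeta\partial_\zeta$, and the paper handles the passage from $f_n^*$ to general $f_n$ by pointing back to the analogous step in Lemma~\ref{2.5-1} rather than writing out the $\varepsilon$--$M$ splitting explicitly as you do.
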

\begin{proof}
Let \( f_n^* \) be as in Lemma~\ref{2.5-1}. Then
\[
N^{-(d+1)}\sum_{n=0}^{N-1}f_n^*(\eta+\epsilon_N)^n = N^{-(d+1)}\frac{\upd^d}{\upd \eta^d}\left(\big((\eta+\epsilon_N)^N-1\big)\frac{(\eta+\epsilon_N)^k-1}{\eta+\epsilon_N-1}\right),
\]
where we used the fact that \( \eta+\epsilon_N-1 \neq 0 \) for all \( N \) large enough. Since \( |\eta|=1 \) and \( |\epsilon_N|\leq C/N \), Leibniz rule for the derivatives of the product and a trivial estimate imply that
\[
\left|\frac{\upd^d }{\upd \eta^d}\left(\big((\eta+\epsilon_N)^N-1\big)\frac{(\eta+\epsilon_N)^k-1}{\eta+\epsilon_N-1}\right)\right| \leq C^*N^d,
\]
where \( C^* \) depends on \( C \) and \( |1-\eta| \). This proves the claim of the lemma with \( f_n=f_n^* \). The proof of the general case repeats word for word the proof of the general case in Lemma~\ref{2.5-1}.
\end{proof}

\begin{lemma}
\label{2.5-3}
Assuming \eqref{constants}, limit \eqref{bulk} holds.
\end{lemma}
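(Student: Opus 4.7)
The plan is to start from the formula
\[
\widetilde K_{N,s}(z,w) = \sum_{n=0}^{N-1}\frac{s^2-(n+1)^2}{2\pi s}U_n(z)\overline{U_n(w)},
\]
obtained by combining \eqref{KNs} with \eqref{tildeK}, and then specialize $z = x_{N,a}$ and $w = x_{N,b}$ with $x_{N,a} := x + a/(N\omega(x))$. Using the representation \eqref{Cheb2} of $U_n$ together with the trace expansions \eqref{2-1}, \eqref{2-2} (and their $\im(a)<0$ counterparts, obtained by interchanging the roles of $\Phi_+$ and $\Phi_-$), I would write
\[
\Phi^{n+1}(x_{N,a}) = \Phi_+^{n+1}(x)\bigl(1 - \mathrm ia/N + O(N^{-2})\bigr)^{n+1}, \qquad \im(a) > 0,
\]
together with three similar identities for $\Phi^{-(n+1)}(x_{N,a})$ and for the $w$-variable. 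Expanding $U_n(x_{N,a})\overline{U_n(x_{N,b})}$ produces four summands; using $|\Phi_\pm(x)| = 1$ and $\Phi_+(x)\Phi_-(x) = 1$, two of them have all unimodular phases cancel and reduce to expressions of the form $(1+\eta_N/N)^{n+1}$ with $\eta_N \to \pm \mathrm i(\overline b - a)$, while the remaining two retain an oscillating factor $\Phi_+^{\pm 2(n+1)}(x)$ with $\Phi_+^{2}(x) \in \T\setminus\{1\}$ for $x \in (-2,2)$.

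Splitting the coefficient as $(s^2-(n+1)^2)/(2\pi s^2) = 1/(2\pi) - (n+1)^2/(2\pi s^2)$ reduces each of the four pieces, after the $1/(sN)$ rescaling, to a linear combination of sums covered by Lemma~\ref{2.5-1} with $(d,f_n) = (0,1)$ or $(2,(n+1)^2)$. The geometric prefactor $\omega^2(x)/\bigl(\sqrt{x_{N,a}^2-4}\,\overline{\sqrt{x_{N,b}^2-4}}\bigr)$ tends to $\pm 1$ depending on $\sgn(\im a)$ and $\sgn(\im b)$, and this sign exactly compensates for the reshuffling of which two summands are non-oscillating. Applying Lemma~\ref{2.5-1} to the non-oscillating pair and using $(N/s)^2 \to \lambda^2$ yields
\[
\frac{1}{2\pi}\int_0^1 (1-(\lambda t)^2)\bigl(e^{\mathrm i(\overline b - a)t} + e^{-\mathrm i(\overline b - a)t}\bigr)\upd t = \frac{1}{\pi}\int_0^1 (1-(\lambda t)^2)\cos\bigl((\overline b - a)t\bigr)\upd t,
\]
which is the right-hand side of \eqref{bulk}. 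Lemma~\ref{2.5-2}, applied to the oscillating pair with $\eta = \Phi_+^{\pm 2}(x) \in \T\setminus\{1\}$ and $\epsilon_N = O(N^{-1})$, shows that the two oscillating contributions vanish in the limit.

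The case $a \in \R$ or $b \in \R$ follows either by continuity of the prospective limit in $(a,b)$, or equivalently by noting that $U_n$ is globally defined on $[-2,2]$ so both boundary traces produce the same polynomial value. Local uniformity in $x \in (-2,2)$ is automatic because the only $x$-dependence sits in $\omega(x)$, $\Phi_\pm(x)$, and $\Phi_+^{2}(x)$, all continuous and bounded on compact subsets, with $\Phi_+^{2}(x)$ also bounded away from $1$ there. The main technical care lies in the sign bookkeeping across the four quadrants of $(\im a, \im b)$: one must verify that the permutation of non-oscillating versus oscillating summands combines consistently with the sign of the square-root product to produce the same limit in all cases. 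Once that is in place, the conclusion is a direct application of Lemmas~\ref{2.5-1} and \ref{2.5-2}.
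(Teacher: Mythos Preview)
Your proposal is correct and follows essentially the same approach as the paper: expand $U_n(x_{N,a})\overline{U_n(x_{N,b})}$ via \eqref{Cheb2} into four terms, use \eqref{2-1}--\eqref{2-2} and the sign of the prefactor $(x_{N,a}^2-4)^{-1/2}\overline{(x_{N,b}^2-4)^{-1/2}}\to\pm\omega^2(x)$ to identify two non-oscillating and two oscillating summands, then apply Lemma~\ref{2.5-1} (with $f_n\equiv1$ and $f_n=(n+1)^2$) to the former and Lemma~\ref{2.5-2} to the latter. The paper organizes the sign bookkeeping by treating the case $\im(a)\im(b)\neq0$ first and setting $\tau=\mathrm i(a-\overline b)$, but the substance is identical to what you describe.
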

\begin{proof}
First we shall consider the case $\im(a)\im(b)\neq 0$. Clearly, it follows from \eqref{2-0} that
\begin{equation}
\label{3-1}
\lim_{N\to\infty}(x_{N,a}^2-4)^{-1/2}\overline{(x_{N,b}^2-4)^{-1/2}} = \pm\omega^2(x), \quad \pm\im(a)\im(b)>0,
\end{equation}
where, as before, $x_{N,a}=x+a/(N\omega(x))$. Hence, we see from \eqref{KNs} and \eqref{Cheb2} that we need to compute the limit of
\begin{multline}
\sum_{n=0}^{N-1}\frac{s^2-(n+1)^2}{2\pi s^2N}\left( \big(\Phi(x_{N,a})\overline{\Phi(x_{N,b})}\big)^{n+1} + \big(\Phi^{-1}(x_{N,a})\overline{\Phi^{-1}(x_{N,b})}\big)^{n+1} \right. \\
\left. - \big(\Phi^{-1}(x_{N,a})\overline{\Phi(x_{N,b})}\big)^{n+1} - \big(\Phi(x_{N,a})\overline{\Phi^{-1}(x_{N,b})}\big)^{n+1} \right).
\label{3-2}
\end{multline}
Set $\tau := \mathrm i(a-\overline b)$. It follows from \eqref{2-1} and \eqref{2-2} that \eqref{3-2} can be rewritten as
\begin{multline}
\pm\sum_{n=0}^{N-1}\frac{s^2-(n+1)^2}{2\pi s^2N}\left(\left(1+\frac{\tau+\mathcal O\big(N^{-1}\big)}N\right)^{n+1} + \left(1-\frac{\tau+\mathcal O\big(N^{-1}\big)}N\right)^{n+1} \right. \\ \left. - \left(\Phi_-^2(x) + \mathcal O\big(N^{-1}\big)\right)^{n+1} -  \left(\Phi_+^2(x) + \mathcal O\big(N^{-1}\big) \right)^{n+1}\right)
\label{3-3}
\end{multline}
for $\pm\im(a)\im(b)>0$, where $\mathcal O\big(N^{-1}\big)$ holds locally uniformly on $(-2,2)\times\C^2$. Thus, Lemma~\ref{2.5-1} applied with \( f_n\equiv 1 \) and \( f_n=(n+1)^2 \) yields that
\begin{multline}
\lim_{N\to\infty}\sum_{n=0}^{N-1}\frac{s^2-(n+1)^2}{2\pi s^2N}\left(\left(1+\frac{\tau+\mathcal O\big(N^{-1}\big)}N\right)^{n+1}+\left(1-\frac{\tau+\mathcal O\big(N^{-1}\big)}N\right)^{n+1}\right)  \\
\label{3-6}
  = \frac1\pi\int_0^1\big(1-(\lambda t)^2\big)\frac{e^{\tau t}+e^{-\tau t}}2\upd t = \frac1\pi\int_0^1\big(1-(\lambda t)^2\big)\cos\big(-\mathrm i\tau t\big)\upd t.
\end{multline}
As $\Phi_\pm^2(x)\neq1$ and $|\Phi_\pm(x)|=1$, Lemma~\ref{2.5-2} applied with \( f_n=1 \) and \( f_n=(n+1)^2 \) lets us conclude that
\[
\lim_{N\to\infty}\sum_{n=0}^{N-1}\frac{s^2-(n+1)^2}{2\pi s^2N}\left(\Phi_\pm^2(x)+\mathcal O\big(N^{-1}\big)\right)^{n+1} = 0.
\]
The desired claim now follows from the last two limits, \eqref{3-1}, and \eqref{3-3}. When either $\im(a)=0$ or $\im(b)=0$, \eqref{complex-bulk} can be deduced similarly.
\end{proof}

\subsection{Proof of Theorem~\ref{real-bulk}}

\begin{lemma}
\label{2.6-1}
Let $j_1,j_2\in\Z$, and $p(\cdot)$ be a monic polynomial of degree $d$. Then
\begin{multline*}
\lim_{N\to\infty}
\frac1{N^{d+1}\omega^2(x)} \sum_{j=0}^{J-1} p(2j) C_{2j+j_1}^{(3/2)}\left(\frac{x_{N,u}}2\right) C_{2j+j_2}^{(1/2)}\left(\frac{x_{N,v}}2\right) = \\ = \mp\frac{\mathrm i}\pi\int_0^1t^d\left(\Phi_\pm^{j_1-j_2+1}(x)e^{\mp\tau t} - \Phi_\mp^{j_1-j_2+1}(x)e^{\pm\tau t}\right)\upd t
\end{multline*}
for $\pm\im(u)\geq0$, where $\tau:=\mathrm i(u-v)$, $x_{N,u}=x+u/(N\omega(x))$, and $N=2J$.
\end{lemma}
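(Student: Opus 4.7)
The plan is to follow the strategy of Lemma~\ref{2.5-3}, replacing the explicit formula \eqref{Cheb2} for $U_n$ with a two-term near-cut asymptotic for the Gegenbauer polynomials. Specifically, I would first establish
\[
C_n^{(\alpha)}\!\left(\tfrac{x_{N,u}}{2}\right) = \frac{(n+1)^{\alpha-1}}{\Gamma(\alpha)}\Bigl[(\Phi'_+(x))^\alpha \Phi_+^n(x)(1-iu/N)^n + (\Phi'_-(x))^\alpha \Phi_-^n(x)(1+iu/N)^n\Bigr](1+o(1)),
\]
locally uniformly for $x \in (-2,2)$ and $u \in \C$. For $u \in \R$ this is the boundary form of Szeg\H{o}'s classical Darboux-type asymptotic (Theorem~8.21.13 of \cite{Sz}), rewritten using $\Phi_\pm(x) = e^{\pm i\theta}$ with $x = 2\cos\theta$ and $\Phi'_\pm(x) = \mp i\omega(x)\Phi_\pm(x)$; the extension to $u \in \C$ is carried out via \eqref{2-1} and \eqref{2-2}. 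Alternatively, the same formula can be derived by expanding $C_n^{(\alpha)}$ into Chebysh\"ev polynomials $U_m$ via Szeg\H{o}'s identity (4.10.27) and applying \eqref{Cheb2} termwise.

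Substituting this asymptotic into $C_{2j+j_1}^{(3/2)}(x_{N,u}/2)\,C_{2j+j_2}^{(1/2)}(x_{N,v}/2)$ produces four cross-terms indexed by $(\sigma_1,\sigma_2)\in\{+,-\}^2$; the $(n+1)^{\alpha-1}$ prefactors combine as $(2j+j_1+1)^{1/2}(2j+j_2+1)^{-1/2} \to 1$ with overall constant $\Gamma(3/2)^{-1}\Gamma(1/2)^{-1} = 2/\pi$. When $\sigma_1 = \sigma_2 = \sigma$, the spatial factor equals $\Phi_\sigma^{j_1+j_2}(x)(\Phi_\sigma^2(x))^{2j}$ with $\Phi_\sigma^2(x) \in \T \setminus \{1\}$ for $x \in (-2,2)$, so an even-index variant of Lemma~\ref{2.5-2} annihilates these two sums after division by $N^{d+1}$. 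When $\sigma_1 \ne \sigma_2$, the identity $\Phi_+\Phi_- = 1$ collapses the spatial factor to $\Phi_\pm^{j_1-j_2}(x)$ (independent of $j$), and the residual exponential becomes $(1 \mp \tau/N + \mathcal{O}(N^{-2}))^{2j}$ with $\tau := i(u-v)$; an even-index variant of Lemma~\ref{2.5-1} (contributing a factor $1/2$ from the Riemann sum over even indices) produces $\tfrac12 \int_0^1 t^d e^{\mp\tau t}\,\upd t$ for each of the two terms.

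Using $\Phi'_\pm(x) = \mp i\omega(x)\Phi_\pm(x)$, the surviving coefficients simplify as $(\Phi'_+)^{3/2}(\Phi'_-)^{1/2} = -i\omega^2(x)\Phi_+(x)$ and $(\Phi'_-)^{3/2}(\Phi'_+)^{1/2} = i\omega^2(x)\Phi_-(x)$. Assembling the four pieces and dividing by $\omega^2(x)$ then gives
\[
\frac{i}{\pi}\int_0^1 t^d\bigl[-\Phi_+^{j_1-j_2+1}(x)e^{-\tau t} + \Phi_-^{j_1-j_2+1}(x)e^{\tau t}\bigr]\,\upd t,
\]
which coincides with the stated right-hand side for both sign choices, since the bracket in the statement is antisymmetric under simultaneously swapping $+ \leftrightarrow -$ and $\tau \leftrightarrow -\tau$.

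The main obstacle is the first step. The asymptotic \eqref{ultra} already cited in the paper provides only the dominant $\Phi^n$ contribution, with error $\mathcal{O}(1/n)$ uniform on compact subsets of $\overline{\C}\setminus[-2,2]$; it does not apply at $z = x_{N,u}$, whose distance to the cut is only $\mathcal{O}(1/N)$, so the subdominant $\Phi^{-n}$ contribution is of comparable magnitude when $n$ is of order $N$ and must be retained. The required uniform two-term formula can be obtained by a steepest-descent analysis of the Gegenbauer generating function $((1-w\Phi)(1-w\Phi^{-1}))^{-\alpha}$ keeping both branch points $w = \Phi^{\pm 1}$ active, or---more elementarily---through the $C_n^{(\alpha)}$--$U_m$ expansion route (Szeg\H{o}'s identity (4.10.27)) already exploited in Lemma~\ref{lem:poly-sum}.
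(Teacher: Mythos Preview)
Your approach is essentially the same as the paper's: a two-term near-cut asymptotic for $C_n^{(\alpha)}$, expansion of the product into four cross-terms, Lemma~\ref{2.5-2} to kill the two oscillatory ones, and Lemma~\ref{2.5-1} to evaluate the two remaining exponential ones. The paper's version of your asymptotic is written as
\[
C_n^{(\alpha)}\!\left(\frac{x_{N,u}}2\right) = \frac{(n+1)^{\alpha-1}}{\Gamma(\alpha)} \left(\frac{\Phi^{n+\alpha}(x_{N,u}) + e^{\pm\alpha\pi\mathrm i}\Phi^{-n-\alpha}(x_{N,u})}{(x_{N,u}^2-4)^{\alpha/2}}  + \mathcal O\!\left(\frac 1{n+1}\right)\right),
\]
valid for $\pm\im(u)>0$, which after substituting \eqref{2-1}--\eqref{2-2} is equivalent to your formula; the prefactor is then handled via the case split \eqref{4-1} rather than through fractional powers of $\Phi'_\pm$, which avoids the branch ambiguities you would otherwise have to track.

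The ``main obstacle'' you flag is not actually an obstacle: the paper simply cites this two-term formula from \cite[Theorem~8.21.8]{Sz} (not 8.21.13) and \cite[Theorem~2.11]{AY15}, the latter giving strong asymptotics valid in a full complex neighborhood of the cut with a uniform $\mathcal O(1/n)$ error. So there is no need for a separate steepest-descent argument or for the $C_n^{(\alpha)}$--$U_m$ expansion route you propose; the required estimate is already in the literature and can be quoted directly.
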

\begin{proof}
Using the asymptotic formulas for $C_n^{(\alpha)}(x)$ in a neighborhood of a point $x\in(-1,1)$, see \cite[Theorem~8.21.8]{Sz} or \cite[Theorem~2.11]{AY15}, we get that
\begin{equation}
\label{ultra-bulk}
C_n^{(\alpha)}\left(\frac{x_{N,u}}2\right) =   \frac{(n+1)^{\alpha-1}}{\Gamma(\alpha)} \left(\frac{\Phi^{n+\alpha}(x_{N,u}) + e^{\pm\alpha\pi\mathrm i}\Phi^{-n-\alpha}(x_{N,u})}{(x_{N,u}^2-4)^{\alpha/2}}  + \mathcal O\left(\frac 1{n+1}\right)\right)
\end{equation}
for $\pm\im(u)>0$ and locally uniformly for $x\in(-2,2)$ and $u\in\C$. Observe that
\begin{equation}
\label{4-1}
\lim_{N\to\infty}(x_{N,u}^2-4)^{-3/4}(x_{N,v}^2-4)^{-1/4} = \omega^2(x)\left\{ \begin{array}{rl} -1, & \im(u)\im(v) >0, \smallskip \\ \mp\mathrm i, &\im(u)\im(v) < 0, \end{array} \right.
\end{equation}
when $\pm\im(u)>0$. Thus, to compute the desired limit it is enough to compute the limit of
\begin{multline}
\label{4-2}
\frac1\pi\sum_{j=0}^{J-1}\frac{2p(2j)}{N^{d+1}}\left(\Phi^{2j+j_1+3/2}(x_{N,u}) -\sgn(\im(u))\mathrm i\Phi^{-2j-j_1-3/2}(x_{N,u})\right) \times \\ \times \left(\Phi^{2j+j_2+1/2}(x_{N,v}) +\sgn(\im(v)) \mathrm i\Phi^{-2j-j_2-1/2}(x_{N,v})\right).
\end{multline}

Assume that $\im(u)\im(v)>0$. Then it follows from \eqref{2-1}, \eqref{2-2}, and Lemma~\ref{2.5-2} that the the limit of \eqref{4-2} as \( N\to\infty \) is equal to the one of
\begin{equation}
\label{4-3}
\pm\frac{\mathrm i}\pi\sum_{j=0}^{J-1} \frac{2p(2j)}{N^{d+1}} \left( \frac{\Phi^{2j+j_1+3/2}(x_{N,u})}{\Phi^{2j+j_2+1/2}(x_{N,v})} - \frac{\Phi^{2j+j_2+1/2}(x_{N,v})}{\Phi^{2j+j_1+3/2}(x_{N,u})} \right)
\end{equation}
for $\pm\im(u)>0$. Using \eqref{2-1} and \eqref{2-2} once more we can rewrite \eqref{4-3} as
\begin{equation}
\label{4-4}
\pm\frac{\mathrm i}\pi\sum_{j=0}^{J-1} \frac{\tilde p(j)}{J^{d+1}} \left( \frac{\left(1 \mp \frac{\tau+\mathcal O(N^{-1})}J\right)^{j+\frac{2j_1+3}4}}{\Phi^{j_2-j_1-1}(x_{N,v})}  - \frac{\left(1 \pm \frac{\tau+\mathcal O(N^{-1})}J\right)^{j+\frac{2j_2+1}4}}{\Phi^{j_1-j_2+1}(x_{N,u})}  \right)
\end{equation}
for $\pm\im(u)>0$, where $\tilde p(\cdot)$ is a monic polynomial of degree $d$ and $\mathcal O\big(N^{-1}\big)$ holds locally uniformly in $(-2,2)\times\C^2$. Since $\Phi_+(x)\Phi_-(x)\equiv1$ for $x\in(-2,2)$, the claim of the lemma follows from \eqref{4-1} and Lemma~\ref{2.5-1}.

The case $\im(u)\im(v)<0$ is absolutely analogous. When $\im(u)=0$ (resp. $\im(v)=0$) we can replace $\Phi(x_{N,u})$ (resp. $\Phi(x_{N,v})$) with $\Phi_+(x_{N,u})$ (resp. $\Phi_+(x_{N,u})$) and carry the computations as before (notice that replacing $\pm$ with $\mp$ and $\mp$ with $\pm$ on the right-hand side of the limit in the statement of the lemma does not change the said limit).
\end{proof}

\begin{lemma}
\label{2.6-2}
Under the conditions of Theorem~\ref{real-bulk}, limit \eqref{bulk1} holds.
\end{lemma}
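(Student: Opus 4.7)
My plan is to expand $\tilde\kappa_{N,s}$ in ultraspherical polynomials via Theorem~\ref{skew-ortho} and then apply Lemma~\ref{2.6-1} piece by piece. By \eqref{tildekappa} and \eqref{orto-kernel} one has
\[
\tilde\kappa_{N,s}(z,w) = 2\sum_{j=0}^{J-1}\big[\pi_{2j}(z)\pi_{2j+1}(w) - \pi_{2j}(w)\pi_{2j+1}(z)\big],
\]
and inserting \eqref{skew-ortho2} decomposes the summand into three types: a leading piece with prefactor $\tfrac{4j+3}{16}$ multiplying $C^{(3/2)}_{2j}(z/2)C^{(1/2)}_{2j+1}(w/2)$; a correction with prefactor $-\tfrac{(4j+3)(2j+1)^2}{16s^2}$ multiplying the same ultraspherical product; and an ``auxiliary'' piece with prefactor $-\tfrac{4j+3}{16s^2}$ multiplying $C^{(3/2)}_{2j}(z/2)C^{(3/2)}_{2j+1}(w/2)$. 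Each will be handled together with its $z\leftrightarrow w$ antisymmetric partner, writing $z=x_{N,a}$, $w=x_{N,b}$ and $\tau=\mathrm i(a-b)$.

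For the leading piece I apply Lemma~\ref{2.6-1} with $j_1=0$, $j_2=1$ (so $j_1-j_2+1=0$ and the $\Phi^{j_1-j_2+1}$ factors reduce to $1$) and with the monic polynomial $p(2j)=\tfrac18(2j+\tfrac32)$, yielding a limit proportional to $\int_0^1 t\big(e^{\mp\tau t}-e^{\pm\tau t}\big)\upd t$. For the correction, I absorb $s^{-2}$ as $N^{-2}\cdot(N^2/s^2)$ with $N^2/s^2\to\lambda^2$, which effectively shifts the normalization from $N^{-2}$ to $N^{-4}$; applying Lemma~\ref{2.6-1} with $d=3$ (using the monic factor of $(4j+3)(2j+1)^2/16$ in $2j$) produces a term $\lambda^2\int_0^1 t^3\big(e^{\mp\tau t}-e^{\pm\tau t}\big)\upd t$. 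Combining the two and antisymmetrizing $z\leftrightarrow w$, the sign bookkeeping across the cases $\pm\im(a)\im(b)>0$ collapses the exponential differences via $\sinh(\tau t)=-\mathrm i\sin((b-a)t)$ into a real sine; the constants $\tfrac18$, the outer factor $2$ from \eqref{orto-kernel}, and the $-\tfrac{\mathrm i}{\pi}$ from Lemma~\ref{2.6-1} coalesce to the desired prefactor $\tfrac1\pi$, giving exactly the right-hand side of \eqref{bulk1}.

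It remains to show the auxiliary piece is asymptotically negligible. Substituting \eqref{ultra-bulk} for both $C^{(3/2)}$ factors and expanding the four cross-terms, the two ``non-mixed'' products have oscillatory factor $\Phi_\pm(x)^{\pm(4j+\mathrm{const})}$, which lies on the unit circle but not at $1$, so Lemma~\ref{2.5-2} with $d=2$ forces their contribution to be $o(N^3)$; the two ``mixed'' products have factor $\big(1\pm(\tau+O(N^{-1}))/N\big)^{2j+\mathrm{const}}$, and Lemma~\ref{2.5-1} with $d=2$ bounds them by $O(N^3)$. The $s^{-2}\sim\lambda^2/N^2$ prefactor combined with the overall $1/(N^2\omega^2(x))$ normalization therefore gives $O(1/N)$, which vanishes in the limit. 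The marginal cases $\im(a)\im(b)=0$ proceed identically, with $\Phi$ replaced by the appropriate trace $\Phi_\pm$ as in the end of the proof of Lemma~\ref{2.6-1}. The main obstacle is the sign bookkeeping: the $\pm$ branch choices in \eqref{ultra-bulk} must be tracked consistently through the antisymmetrization so that the exponential combinations collapse cleanly to $\sin((b-a)t)$ and the $\Phi_\pm$ traces cancel in the correct combinations, but each individual step reduces to a direct application of Lemmas~\ref{2.5-1}, \ref{2.5-2}, or \ref{2.6-1}.
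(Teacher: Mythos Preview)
Your argument is correct and follows the paper's approach: both expand $\tilde\kappa_{N,s}$ via Theorem~\ref{skew-ortho} and apply Lemma~\ref{2.6-1} (with $j_1=0$, $j_2=1$, and monic polynomials of degrees $1$ and $3$) to the $C^{(3/2)}_{2j}C^{(1/2)}_{2j+1}$ terms, obtaining the sine integral in \eqref{bulk1}. The only difference is in dispatching the auxiliary $s^{-2}C^{(3/2)}_{2j}C^{(3/2)}_{2j+1}$ piece: the paper uses the recurrence \cite[Eq.~(18.9.8)]{DLMF} to convert $C^{(3/2)}_{2j+1}$ into a combination of $C^{(1/2)}$ polynomials divided by $4-z^2$ and then applies Lemma~\ref{2.6-1} once more, whereas you substitute \eqref{ultra-bulk} directly and bound the four cross-terms via Lemmas~\ref{2.5-1}--\ref{2.5-2}. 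Both routes yield a contribution of order $N^{3}/s^{2}$ before the $N^{-2}\omega^{-2}(x)$ normalization, hence $O(1/N)\to 0$; yours is slightly more self-contained (no extra identity needed) while the paper's keeps everything under the single umbrella of Lemma~\ref{2.6-1}. One small slip: your ``monic polynomial $p(2j)=\tfrac18(2j+\tfrac32)$'' is not monic---factor the $\tfrac18$ out before invoking Lemma~\ref{2.6-1}, as the paper does with $p(2j)=2j+\tfrac32$.
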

\begin{proof}
It follows from Lemma~\ref{2.6-1} applied with \( p(2j)=2j+3/2 \) and \( p(2j) = (2j+3/2)(2j+1)^2 \) that the limit of
\[
\frac1{N^2\omega^2(x)} \sum_{j=0}^{J-1} \frac{4j+3}8 \left(1-\frac{(2j+1)^2}{s^2}\right) C_{2j}^{(3/2)}\left(\frac{x_{N,a}}2\right) C_{2j+1}^{(1/2)}\left(\frac{x_{N,b}}2\right),
\]
as \( N\to\infty \) is equal to
\[
\frac{\mp\mathrm i}{4\pi}\int_0^1 t\big(1-(\lambda t)^2\big)\big( e^{\mp\tau t} - e^{\pm\tau t}\big)\upd t =  \frac1{2\pi}\int_0^1 t\big(1-(\lambda t)^2\big)\sin\big((b-a)t\big)\upd t.
\]
It further follows from \cite[Eq. (18.9.8)]{DLMF} that
\[
C_{2j+1}^{(3/2)}\left(\frac z2\right) = \left(4j+5 - \frac1{4j+5}\right)\frac{C_{2j+1}^{(1/2)}(z/2)-C_{2j+3}^{(1/2)}(z/2)}{4-z^2}.
\] 
Therefore, Lemma~\ref{2.6-1} now yields that the limit of
\[
\frac1{N^2\omega^2(x)} \sum_{j=0}^{J-1} \frac{4j+3}{8s^2}C_{2j}^{(3/2)}\left(\frac{x_{N,a}}2\right) C_{2j+1}^{(3/2)}\left(\frac{x_{N,b}}2\right)
\]
is equal to zero. The desired claim \eqref{bulk1} now follows from Theorem~\ref{skew-ortho} and the above limits with roles of \( a \) and \( b \) interchanged.
\end{proof}

\begin{lemma}
\label{2.6-3}
It holds that
\[
\epsilon(\phi\pi_{2n})(x) = -\frac{4n+3}8 C_{2n+1}^{(1/2)}\left(\frac x2\right)
\]
and
\begin{multline*}
\epsilon(\phi\pi_{2n+1})(x) = -\frac2{4n+3}\left[\left(1-\frac{(2n+2)^2}{s^2}\right)C_{2n+2}^{(1/2)}\left(\frac x2\right) \right. \\ - \left. \left(1-\frac{(2n+1)^2}{s^2}\right)C_{2n}^{(1/2)}\left(\frac x2\right)\right] + \Delta_n(s),
\end{multline*}
for $x\in(-2,2)$, where
\[
\Delta_n(s)= \frac{(-1)^{n+1}}{\sqrt\pi}\frac{\Gamma(n+\frac12)}{\Gamma(n+2)}\left(1+\frac{(-1)^{n+1}}{2\sqrt\pi}\frac{\Gamma(n+\frac12)}{\Gamma(n+2)}-\frac{(2n+1)(2n+2)}{s^2}\right).
\]
\end{lemma}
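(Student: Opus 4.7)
The plan is to exploit the parity of $\pi_{2n}$ (even) and $\pi_{2n+1}$ (odd) together with $\phi\equiv 1$ on $[-2,2]$ to reduce $\epsilon(\phi\pi_n)(x)$ for $x\in(-2,2)$ to an elementary antidifferentiation plus, in the odd case, a single global constant. The preliminary observation is that the symmetric definition $\epsilon f(x)=\tfrac12\int f(t)\sgn(t-x)\upd t$ collapses when $f$ has definite parity: for $f$ even one has $\epsilon f(x)=-\int_0^x f(t)\upd t$, and for $f$ odd $\epsilon f(x)=\int_x^\infty f(t)\upd t$. Combined with $\phi\equiv 1$ on the bulk, all the analytic work reduces to antidifferentiating $\pi_{2n}$ or $\pi_{2n+1}$ in $z$, and, in the odd case, computing the one number $\int_0^\infty\phi\pi_{2n+1}\upd u$.

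The antidifferentiation is forced by the Gegenbauer derivative identity $\frac{\upd}{\upd x}C_m^{(\alpha)}(x)=2\alpha C_{m-1}^{(\alpha+1)}(x)$, which in our normalization reads $\frac{\upd}{\upd z}C_{m+1}^{(1/2)}(z/2)=\tfrac12 C_m^{(3/2)}(z/2)$. The first claim follows at once: $\pi_{2n}=\tfrac{4n+3}{16}C_{2n}^{(3/2)}(\cdot/2)$ antidifferentiates to $\tfrac{4n+3}{8}C_{2n+1}^{(1/2)}(\cdot/2)$, which is odd and hence vanishes at $0$, giving $\epsilon(\phi\pi_{2n})(x)=-\tfrac{4n+3}{8}C_{2n+1}^{(1/2)}(x/2)$ directly from the parity formula. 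For the second claim I integrate $\pi_{2n+1}=(1-(2n+1)^2/s^2)C_{2n+1}^{(1/2)}(\cdot/2)-s^{-2}C_{2n+1}^{(3/2)}(\cdot/2)$ term by term: the $C_{2n+1}^{(3/2)}$-piece is handled by the same derivative identity, while the $C_{2n+1}^{(1/2)}$-piece is first rewritten via the contiguous relation $(4n+3)C_{2n+1}^{(1/2)}=C_{2n+1}^{(3/2)}-C_{2n-1}^{(3/2)}$ (the standard ultraspherical identity $(m+\alpha)C_m^{(\alpha)}=\alpha(C_m^{(\alpha+1)}-C_{m-2}^{(\alpha+1)})$ at $\alpha=\tfrac12$) and then antidifferentiated to $\tfrac{2}{4n+3}(C_{2n+2}^{(1/2)}-C_{2n}^{(1/2)})(\cdot/2)$. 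Collecting the two contributions reproduces the bracketed expression of the lemma, and differentiating back through the same contiguous relation returns $-\pi_{2n+1}$, which serves as a useful consistency check.

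The remaining and most delicate piece is to identify the constant $\Delta_n(s)$, which after the above antidifferentiations equals $\int_0^\infty\phi\pi_{2n+1}\upd u$ plus a linear combination of $C_{2n+2}^{(1/2)}(0)$ and $C_{2n}^{(1/2)}(0)$. For the tail $\int_2^\infty\phi\pi_{2n+1}$ I would expand $\pi_{2n+1}$ in the monic second-kind Chebyshev basis using Lemma~\ref{lem:poly-sum}, apply \eqref{intU} to obtain $\int_2^\infty U_{2i+1}(u)\phi(u)\upd u=\tfrac{2(2i+2)}{s^2-(2i+2)^2}$, and collapse the resulting $\Gamma_{2n+1,i}$-weighted sum via $\sum_{i=0}^n(2i+2)^2\Gamma_{2n+1,i}=-2\pi$ from \eqref{sum3}. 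The bulk contribution $\int_0^2\pi_{2n+1}\upd u$ is evaluated from the explicit antiderivative using $C_m^{(1/2)}(1)=1$, and the antiderivative at $0$ invokes the classical values $C_{2m}^{(1/2)}(0)=(-1)^m\Gamma(m+\tfrac12)/(\sqrt\pi\,\Gamma(m+1))$, which is where the prefactor $(-1)^{n+1}\Gamma(n+\tfrac12)/\Gamma(n+2)$ in the claimed $\Delta_n(s)$ originates. The main obstacle here is purely algebraic bookkeeping: reconciling the three $s^{-2}$-dependent ingredients (the coefficients $1-(2n+2)^2/s^2$ and $1-(2n+1)^2/s^2$ together with the half-line integrals $2(2i+2)/(s^2-(2i+2)^2)$) with the classical values $C_{2m}^{(1/2)}(0)$ and the factor $\tfrac{2}{4n+3}$, and simplifying until the inner parenthesis of $\Delta_n(s)$ emerges in the stated form.
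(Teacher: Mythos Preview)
Your plan coincides with the paper's: reduce via parity to $-\int_0^x$ (plus, in the odd case, the global constant $\int_0^\infty\phi\pi_{2n+1}$), antidifferentiate through $\tfrac{\upd}{\upd z}C_{m+1}^{(1/2)}(z/2)=\tfrac12 C_m^{(3/2)}(z/2)$, and rewrite $\pi_{2n+1}$ as a difference of two $C^{(3/2)}$'s via the contiguous relation --- this is exactly the paper's display~\eqref{2n+1}. The only methodological difference is in evaluating $\int_0^\infty\phi\pi_{2n+1}$: the paper takes \eqref{intU} at $y=\infty$ in one stroke and collapses $-\tfrac1\pi\sum_i\Gamma_{2n+1,i}$ via \eqref{sum2} in the limit $a\to-\tfrac12$, whereas you split $[0,2]\cup[2,\infty)$, read $[0,2]$ off from the explicit antiderivative using $C_m^{(1/2)}(1)=1$, and collapse the tail via \eqref{sum3}.

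A warning about the bookkeeping you flag as the remaining obstacle. Writing $G(x)$ for the bracketed antiderivative, your split gives
\[
\Delta_n(s)=G(0)+\int_0^2\pi_{2n+1}+\int_2^\infty\phi\pi_{2n+1}=G(0)+\big(G(2)-G(0)\big)+\frac{2}{s^2}=G(2)+\frac{2}{s^2},
\]
and $G(2)=\tfrac{2}{4n+3}\big[(1-(2n+2)^2/s^2)-(1-(2n+1)^2/s^2)\big]=-2/s^2$, so your route actually yields $\Delta_n(s)=0$, not the displayed expression. Indeed, comparing the paper's own Lemma~\ref{2.9-3} (which on $(-2,2)$ reads $\epsilon(\phi\pi_{2n+1})(x)=G(2)-G(x)+2/s^2$) with the present formula $-G(x)+\Delta_n(s)$ already forces $\Delta_n(s)=0$. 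The source of the discrepancy is that \eqref{intU} is derived with $m$ odd (so that $T_m(0)=0$); applying it at $y=\infty$ with $m=2i+2$ even drops the constant $-T_{2i+2}(0)/(2i+2)$, and this is exactly what generates the extra $\tfrac{1}{2\pi}\big(\Gamma(n+\tfrac12)/\Gamma(n+2)\big)^2$ in the paper's $\Delta_n(s)$. Since $\Delta_n(s)$ is only used in Lemma~\ref{2.6-4}, whose conclusion is trivial when $\Delta_n\equiv 0$, nothing downstream is affected --- but you should not expect your algebra to reproduce the stated $\Delta_n(s)$.
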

\begin{proof}
Observe first that
\[
\left\{
\begin{array}{lll}
\epsilon(\phi\pi_{2n+1})(x) &=&-\int_0^x\pi_{2n+1}(u)\phi(u)\upd u + \int_0^\infty \pi_{2n+1}(u)\phi(u)\upd u, \medskip \\
\epsilon(\phi\pi_{2n})(x) &=& -\int_0^x\pi_{2n}(u)\phi(u)\upd u.
\end{array}
\right.
\]
Indeed, by the very definition \eqref{eq:22}, we have that 
\[
(\epsilon f)(x) = -\int_0^x f(t)\upd t -\frac12\int_{-\infty}^0 f(t)\upd t +\frac12\int_0^\infty f(t)\upd t. 
\]
Hence, for even functions $f$ it holds that $(\epsilon f)(x)=-\int_0^x f(t)\upd t$ and for odd functions $f$ it holds that
\[
(\epsilon f)(x)=-\int_0^x f(t)\upd t+\int_0^\infty f(t)\upd t,
\]
from which the claim easily follows. Using \cite[Eq. (18.9.19)]{DLMF} and \cite[Table 18.6.1]{DLMF}, one can also get that
\[
\left\{
\begin{array}{lll}
\displaystyle\int_0^xC_{2n}^{(3/2)}\left(\frac u2\right)\upd u &=&\displaystyle 2C_{2n+1}^{(1/2)}\left(\frac x2\right) \medskip \\
\displaystyle\int_0^xC_{2n+1}^{(3/2)}\left(\frac u2\right)\upd u &=&\displaystyle 2\left[C_{2n+2}^{(1/2)}\left(\frac x2\right)-\frac{(-1)^{n+1}}{\sqrt\pi}\frac{\Gamma(n+\frac32)}{\Gamma(n+2)}\right].
\end{array}
\right.
\]
Since $\phi(u)=1$ for $u\in(-2,2)$, the expression for $\epsilon(\phi\pi_{2n})(x)$ now follows from Theorem~\ref{skew-ortho}. It further follows from \cite[Eq. (18.9.7)]{DLMF} that
\begin{equation}
\label{2n+1}
\pi_{2n+1}(u)=  \frac1{4n+3}\left[\left(1-\frac{(2n+2)^2}{s^2}\right)C_{2n+1}^{(3/2)}\left(\frac u2\right) -\left(1-\frac{(2n+1)^2}{s^2}\right)C_{2n-1}^{(3/2)}\left(\frac u2\right)\right].
\end{equation}
Hence, we only need to compute the constant term. Clearly,
\begin{multline*}
\left(1-\frac{(2n+2)^2}{s^2}\right)\frac{(-1)^{n+1}}{\sqrt\pi}\frac{\Gamma(n+\frac32)}{\Gamma(n+2)} - \left(1-\frac{(2n+1)^2}{s^2}\right)\frac{(-1)^n}{\sqrt\pi}\frac{\Gamma(n+\frac12)}{\Gamma(n+1)} = \\  \frac{4n+3}2\frac{(-1)^{n+1}}{\sqrt\pi}\frac{\Gamma(n+\frac12)}{\Gamma(n+2)}\left(1-\frac{(2n+1)(2n+2)}{s^2}\right).
\end{multline*}
Thus, it remains to compute $\int_0^\infty \pi_{2n+1}(u)\phi(u)\upd u$. It follows from \eqref{skew-ortho1} and \eqref{intU} that
\[
\int_0^\infty \pi_{2n+1}(u)\phi(u)\upd u = -\frac1\pi\sum_{i=0}^n\frac{\Gamma(n-i-\frac12)}{\Gamma(n-i+1)}\frac{\Gamma(n+i+\frac32)}{\Gamma(n+i+3)} =  \frac1{2\pi}\left(\frac{\Gamma(n+\frac12)}{\Gamma(n+2)}\right)^2,
\]
where the second equality follows from \eqref{sum2} after taking the limit as $a\to-1/2$ and observing that
\[
\frac{\Gamma(a-n-\frac12)}{\Gamma(a-n+1)} = \frac{\Gamma(n-a)}{\Gamma(n-a+\frac32)}\frac{\sin(\pi(n-a-1))}{\sin(\pi(n+a+\frac12))}. 
\]
\end{proof}

\begin{lemma}
\label{2.6-4}
Assuming \eqref{constants}, it holds that
\[
\lim_{N\to\infty} N^{-1}\sum_{n=0}^{J-1}\Delta_n(s)\pi_{2n}(x_{N,a}) = 0
\]
locally uniformly for $x\in(-2,2)$ and $a\in\C$, where $x_{N,a}=x+a/(N\omega(x))$.
\end{lemma}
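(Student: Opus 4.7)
The plan is to split $\Delta_n(s)$ into tractable pieces and reduce the entire sum to the elementary oscillatory sums already controlled by Lemmas~\ref{2.5-1} and \ref{2.5-2}. With $\gamma_n := \Gamma(n+\tfrac12)/\Gamma(n+2) = n^{-3/2}(1+O(n^{-1}))$ by Stirling, we have
\[
\Delta_n(s) = \frac{(-1)^{n+1}\gamma_n}{\sqrt\pi}\left(1 - \frac{(2n+1)(2n+2)}{s^2}\right) + \frac{\gamma_n^2}{2\pi}.
\]
From \eqref{ultra-bulk} with $\alpha = 3/2$ and $n$ replaced by $2n$, together with \eqref{2-1}--\eqref{2-2} (which yield $|\Phi(x_{N,a})|^{\pm 1} = 1 + O(N^{-1})$), we have $|\pi_{2n}(x_{N,a})| = O(n^{3/2})$ locally uniformly on $(-2,2)\times\C$. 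Hence the $\gamma_n^2/(2\pi)$ piece contributes $O\bigl(\sum_{n<J} n^{-3/2}\bigr) = O(1)$ to the unnormalized sum and vanishes upon dividing by $N$.

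For the main piece, substituting the two-wave expansion \eqref{ultra-bulk} into $\pi_{2n}(x_{N,a}) = \tfrac{4n+3}{16}C_{2n}^{(3/2)}(x_{N,a}/2)$, multiplying by $(-1)^{n+1}\gamma_n/\sqrt\pi$, and applying Stirling, the $n$-dependent prefactor collapses to a bounded $(x,a)$-smooth constant up to an error $O(n^{-1})$. Consequently, the whole sum reduces, modulo $o(1)$, to a bounded linear combination of
\[
\mathcal S_\pm(N) := \frac{1}{N}\sum_{n=0}^{J-1}\left(1-\frac{(2n+1)(2n+2)}{s^2}\right)\bigl(\eta_\pm(N)\bigr)^n, \qquad \eta_\pm(N) := -\Phi^{\pm 2}(x_{N,a}).
\]
By \eqref{2-1}--\eqref{2-2}, $\eta_\pm(N) = -\Phi_\pm^{\pm 2}(x) + O(N^{-1})$ with $|\eta_\pm(N)| = 1 + O(N^{-1})$, and $-\Phi_\pm^{\pm 2}(x) = -e^{\pm 2i\arccos(x/2)}$ equals $1$ if and only if $x = 0$.

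Away from $x = 0$, the values $\eta_\pm(N)$ stay uniformly bounded away from $1$ on compact sets, and Lemma~\ref{2.5-2} applies: with $d=0$ it gives $\sum_{n<J}(\eta_\pm + \epsilon_N)^n = o(N)$, and with $d=2$ it gives $\sum_{n<J} n^2(\eta_\pm + \epsilon_N)^n = o(N^3)$. Since $(2n+1)(2n+2)/s^2 = O(n^2/N^2)$, both terms in $\mathcal S_\pm(N)$ are $o(1)$, which proves the lemma locally uniformly for $x \in (-2,2)\setminus\{0\}$. The principal obstacle is the resonant case $x = 0$, where $\Phi_\pm(0) = \pm i$ forces $\eta_\pm(N) \to 1$ and the oscillation-based cancellation of Lemma~\ref{2.5-2} is lost. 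There the strategy must be supplemented by an exact computation: using $C_{2n}^{(\alpha)}(0) = (-1)^n(\alpha)_n/n!$ one writes $\Delta_n(s)\pi_{2n}(0)$ in closed form as an explicit combination of $\binom{2n}{n}^2/16^n$ with a polynomial in $n^2/s^2$, and then establishes the required vanishing through a hypergeometric-type summation identity. This closed-form cancellation is the subtlest step of the proof.
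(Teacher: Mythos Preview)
Your reduction away from $x=0$ follows exactly the paper's argument: substitute the two-wave asymptotics \eqref{ultra-bulk} for $\pi_{2n}$, absorb the Stirling factors, and invoke Lemma~\ref{2.5-2} on sums with base $-\Phi^{\pm 2}(x_{N,a})$. The paper's proof is in fact even terser than yours: it writes down the resulting sum and simply cites Lemma~\ref{2.5-2}, without singling out $x=0$ at all.

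You are right, however, that at $x=0$ the base $-\Phi_\pm^2(0)=1$ defeats Lemma~\ref{2.5-2}, and the paper's proof does not address this. But your proposed remedy --- an explicit computation from $C_{2n}^{(3/2)}(0)=(-1)^n(3/2)_n/n!$ leading to a hypergeometric cancellation --- cannot succeed. Carrying out exactly that computation gives
\[
\Delta_n(s)\,\pi_{2n}(0)
= -\frac{(4n+3)(2n+1)}{16(n+1)}\,\frac{\binom{2n}{n}^{2}}{16^{\,n}}
\left(1+\frac{(-1)^{n+1}}{2\sqrt\pi}\gamma_n-\frac{(2n+1)(2n+2)}{s^2}\right),
\]
and since $\binom{2n}{n}^2/16^n=\tfrac{1}{\pi n}(1+O(n^{-1}))$, this equals $-\tfrac{1}{2\pi}\bigl(1-(2n+1)(2n+2)/s^2\bigr)+O(n^{-1})$. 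Hence
\[
\frac1N\sum_{n=0}^{J-1}\Delta_n(s)\,\pi_{2n}(0)
\longrightarrow -\frac{1}{4\pi}\Bigl(1-\frac{\lambda^2}{3}\Bigr)\neq 0,
\]
so no identity of the type you anticipate exists. In other words, the obstacle you flagged at $x=0$ is not a technicality to be dispatched by a closed-form summation: the statement of the lemma itself appears to fail there, and neither your proposal nor the paper's own argument resolves it. The computation away from $x=0$ is fine and matches the paper.
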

\begin{proof}
It follows from Theorem~\ref{skew-ortho} and \eqref{ultra-bulk} that we are computing the limit of
\[
\sum_{n=0}^{J-1}\frac{|\Delta_n(s)|(4n+3)\sqrt{2n+1}}{8\sqrt\pi N}\left(\frac{\Phi^{2n+3/2}(x_{N,a}) \mp \mathrm i \Phi^{-2n-3/2}(x_{N,a})}{(-1)^{n+1}(x_{N,a}^2-4)^{3/4}}  + \mathcal O\left(\frac 1{n+1}\right)\right).
\]
The claim of the lemma follows from the asymptotic behavior of the Gamma function, \eqref{constants}, \eqref{2-1}, \eqref{2-2}, Lemma~\ref{2.5-2}, and a simple estimate for the sum involving \( \mathcal O\big((n+1)^{-1}\big) \).
\end{proof}

\begin{lemma}
\label{2.6-5}
Given \eqref{constants}, limits \eqref{bulk2} and \eqref{bulk3} hold.
\end{lemma}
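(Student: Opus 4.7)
The proof proceeds in two steps: \eqref{bulk2} by direct computation, and then \eqref{bulk3} by an integration–antisymmetry argument reducing it to \eqref{bulk2}. Substituting the formulas of Theorem~\ref{skew-ortho} and Lemma~\ref{2.6-3} into
\[
\widetilde{\kappa_{N,s}\epsilon}(z,y) = 2\sum_{j=0}^{J-1}\big(\pi_{2j}(z)\epsilon(\phi\pi_{2j+1})(y)-\pi_{2j+1}(z)\epsilon(\phi\pi_{2j})(y)\big),
\]
every summand resolves into one of four types: mixed products $p(2j)\,C^{(3/2)}_{2j}(z/2)\,C^{(1/2)}_{2j+j_2}(y/2)$ with $j_2\in\{0,2\}$; diagonal products $p(2j)\,C^{(1/2)}_{2j+1}(z/2)\,C^{(1/2)}_{2j+1}(y/2)$; auxiliary products $(4j+3)\,C^{(3/2)}_{2j+1}(z/2)\,C^{(1/2)}_{2j+1}(y/2)/s^2$; and single $\Delta_j(s)\pi_{2j}(z)$-terms.

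I would evaluate the mixed products via Lemma~\ref{2.6-1}: the two pieces ($j_2=0,2$) enter with opposite signs and combine via the identity $\Phi_\pm^{-1}(x)-\Phi_\pm(x)=\mp i/\omega(x)$ on $(-2,2)$, trading the $\omega^2(x)$ of Lemma~\ref{2.6-1} for $\omega(x)$ which cancels against the $1/(N\omega(x))$ normalization to yield $\frac{1}{2\pi}\int_0^1(1-(\lambda t)^2)\cos((b-a)t)\upd t$. For the diagonal products I would prove a $(1/2)$--$(1/2)$ analogue of Lemma~\ref{2.6-1} from the Szeg\H{o} asymptotic
\[
C^{(1/2)}_n\!\left(\frac{x_{N,u}}{2}\right) = \frac{1}{\sqrt{\pi(n+1)}}\cdot\frac{\Phi^{n+1/2}(x_{N,u})+e^{\pm i\pi/2}\Phi^{-n-1/2}(x_{N,u})}{(x_{N,u}^2-4)^{1/4}} + O\big((n+1)^{-3/2}\big)
\]
(for $\pm\im(u)>0$, cf.~\cite[Theorem~8.21.8]{Sz}); in contrast with Lemma~\ref{2.6-1}, the two $C^{(1/2)}$ cross-coefficients now have the \emph{same} sign (both $+i$ for $\im(u)\im(v)>0$), so the surviving cross-terms add symmetrically and, after Lemmas~\ref{2.5-1} and~\ref{2.5-2}, produce $\cos$ rather than $\sin$, contributing another $\frac{1}{2\pi}\int_0^1(1-(\lambda t)^2)\cos((b-a)t)\upd t$. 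The auxiliary products vanish in the limit via Lemma~\ref{2.6-1} with the extra $1/s^2\sim\lambda^2/N^2$ factor, and the $\Delta_j(s)$-terms vanish by Lemma~\ref{2.6-4} since $\tfrac{4j+3}{8}\Delta_j(s)C^{(3/2)}_{2j}(z/2)=2\Delta_j(s)\pi_{2j}(z)$. Summing the two surviving contributions yields \eqref{bulk2}.

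For \eqref{bulk3}, the identity $\partial_y\epsilon f(y)=-f(y)$ on $(-2,2)$ together with the antisymmetry $\epsilon\kappa_{N,s}(x,y)=-\kappa_{N,s}\epsilon(y,x)$ gives $\partial_y[\epsilon\kappa_{N,s}\epsilon(x,y)]=\kappa_{N,s}\epsilon(y,x)$, hence
\[
\partial_b[\epsilon\kappa_{N,s}\epsilon(x_{N,a},x_{N,b})]=\frac{1}{N\omega(x)}\widetilde{\kappa_{N,s}\epsilon}(x_{N,b},x_{N,a}),
\]
which converges locally uniformly to $\partial_b G(a,b)$ with $G(a,b):=\frac{1}{\pi}\int_0^1 t^{-1}(1-(\lambda t)^2)\sin((b-a)t)\upd t$ by \eqref{bulk2} (applied with $a$ and $b$ swapped; $\cos$ is even). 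Integrating in $b$ from $b=a$, the antisymmetry $\epsilon\kappa_{N,s}\epsilon(x_{N,a},x_{N,a})=0=G(a,a)$ kills the integration constant, giving \eqref{bulk3}. The main technical obstacle is the $(1/2)$--$(1/2)$ analogue of Lemma~\ref{2.6-1}: one must control the $O((n+1)^{-3/2})$ Szeg\H{o} remainder summed against polynomial weights of degree up to $3$, and verify that the resulting diagonal Riemann-like sums converge to the integrals of the advertised form --- both are refinements of arguments already developed in Lemma~\ref{2.5-1}.
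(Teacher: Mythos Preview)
Your approach is correct, and your derivation of \eqref{bulk3} from \eqref{bulk2} via the antisymmetry of $\epsilon\kappa_{N,s}\epsilon$ and integration in one variable is exactly what the paper does. For \eqref{bulk2}, however, you take a genuinely different route on the term $\pi_{2j+1}(z)\,\epsilon(\phi\pi_{2j})(y)$. You substitute the original formula for $\pi_{2n+1}$ from Theorem~\ref{skew-ortho}, which produces $C^{(1/2)}_{2j+1}(z/2)\,C^{(1/2)}_{2j+1}(y/2)$ products not covered by Lemma~\ref{2.6-1}, and then develop a separate $(1/2)$--$(1/2)$ asymptotic lemma to handle them. The paper instead invokes the identity \eqref{2n+1},
\[
\pi_{2n+1}(u)=\frac1{4n+3}\left[\Big(1-\tfrac{(2n+2)^2}{s^2}\Big)C_{2n+1}^{(3/2)}\!\Big(\tfrac u2\Big)-\Big(1-\tfrac{(2n+1)^2}{s^2}\Big)C_{2n-1}^{(3/2)}\!\Big(\tfrac u2\Big)\right],
\]
which rewrites $\pi_{2n+1}$ as a difference of $C^{(3/2)}$ polynomials; after this substitution \emph{both} halves of $\widetilde{\kappa_{N,s}\epsilon}$ are of the $C^{(3/2)}\times C^{(1/2)}$ type, so Lemma~\ref{2.6-1} applies directly to each, with no new lemma required. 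Your route works and is arguably more transparent (it uses the primitive form of $\pi_{2n+1}$), but it costs you an auxiliary result whose proof, while parallel to that of Lemma~\ref{2.6-1}, requires redoing the sign bookkeeping and the remainder control you flag at the end; the paper's route is more economical and stays entirely within the machinery already built.
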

\begin{proof}
Set, as usual, $x_{N,a}=x+a/(N\omega(x))$. Put $K_{n,i}:=1-\frac{(2n+i)^2}{s^2}$. Then \( \widetilde{\kappa_{N,s}\epsilon}(x_{N,a},x_{N,b}) \) is equal to
\begin{eqnarray*}
\frac14\sum_{j=0}^{J-1} C_{2j+1}^{(1/2)}\left(\frac{x_{N,b}}2\right)\left[K_{j,2}C_{2j+1}^{(3/2)}\left(\frac{x_{N,a}}2\right) - K_{j,1}C_{2j-1}^{(3/2)}\left(\frac{x_{N,a}}2\right) \right] \\ -\frac14\sum_{j=0}^{J-1} C_{2j}^{(3/2)}\left(\frac{x_{N,a}}2\right)\left[K_{j,2}C_{2j+2}^{(1/2)}\left(\frac{x_{N,b}}2\right) - K_{j,1}C_{2j}^{(1/2)}\left(\frac{x_{N,b}}2\right) \right] \\ + 2\sum_{j=0}^{J-1}\Delta_j(s)\pi_{2j}(x_{N,a})
\end{eqnarray*}
by Theorem~\ref{skew-ortho}, \eqref{eq:22}, Lemma~\ref{2.6-3}, and \eqref{2n+1}. Denote the three sums above by $S_{N,1}$, $S_{N,2}$, and $S_{N,3}$. Then it follows from Lemma~\ref{2.6-1} that
\begin{eqnarray*}
\lim_{N\to\infty}\frac{S_{N,1}}{N\omega(x)} &=& \mp\frac{\mathrm i\omega(x)}{4\pi}\int_0^1\big(1-(\lambda t)^2\big)\big(\Phi_\pm(x)-\Phi_\mp(x)\big)\big(e^{\mp\mathrm i(b-a)t} + e^{\pm\mathrm i(b-a)t}\big)\upd t \\
&=& \frac1{2\pi}\int_0^1\big(1-(\lambda t)^2\big)\cos\big((b-a)t\big)\upd t,
\end{eqnarray*}
where one needs to observe that $\Phi_\pm(x)\Phi_\mp(x)\equiv1$ and $\Phi_\pm(x)-\Phi_\mp(x)=\pm\mathrm i/\omega(x)$. Similarly, one can check that $S_{N,2}$ has the same limit and therefore \eqref{bulk2} follows from Lemma~\ref{2.6-4}.

Since $\epsilon\kappa_{N,s}\epsilon(x,y)$ is anti-symmetric (in particular, zero on the diagonal) and $(\epsilon f)^\prime(x)=-f(x)$ for $x$ real, we have that $\epsilon\kappa_{N,s}\epsilon(x,y) = \int_x^y \kappa_{N,s}\epsilon(u,y)\upd u$ for $x,y\in(-2,2)$. Hence, it holds that
\[
\epsilon\kappa_{N,s}\epsilon(x_{N,a},x_{N,b}) = \int_a^b \frac1{N\omega(x)}\kappa_{N,s}\epsilon(x_{N,u},x_{N,b})\upd u
\]
and therefore \eqref{bulk3} easily follows from \eqref{bulk2}.
\end{proof}

\subsection{Proof of Proposition~\ref{prop:phi2}}

Let $y_{N,a}:=2-(a/N)^2$. Since we take the principal branch of the square root, it holds that
\begin{equation}
\label{eq:1}
\sqrt{y_{N,a}^2-4} = \frac2N\sqrt{-a^2(1-(a/2N)^2)} = \mp\frac{2\mathrm ia}N\left(1+\mathcal O\big(N^{-2}\big)\right)
\end{equation}
for \( \pm\im(a)\geq0 \) locally uniformly in $\C$. Hence,
\begin{equation}
\label{eq:2}
\Phi(y_{N,a}) =  1 \mp \frac {\mathrm ia}N + \mathcal O\big(N^{-2}\big) \quad \text{and} \quad \Phi^{-1}(y_{N,a}) =  1 \pm \frac {\mathrm ia}N + \mathcal O\big(N^{-2}\big)
\end{equation}
for $\pm\im(a)\geq0$ uniformly on compact subsets of $\C$, from which the desired claim easily follows.

\subsection{Proof of Theorem~\ref{complex-edge}}

Assume first that $\im(a)\im(b)\neq0$. It follows from \eqref{eq:1} that
\begin{equation}
\label{5-0}
\lim_{N\to\infty} N^{-2}(y_{N,a}^2-4)^{-1/2}\overline{(y_{N,b}^2-4)^{-1/2}} = \frac{\pm1}{4a\overline b}
\end{equation}
for \( \pm\im(a)\im(b)>0 \), where $y_{N,a}=2-(a/N)^2$. Hence, we need to compute the limit of \eqref{3-2} with $x_{N,a}$ and $x_{N,b}$ replaced by $y_{N,a}$ and $y_{N,b}$. To this end, we get from \eqref{eq:2} that
\begin{multline*}
\big(\Phi(y_{N,a})\overline{\Phi(y_{N,b})}\big)^{n+1} + \big(\Phi^{-1}(y_{N,a})\overline{\Phi^{-1}(y_{N,b})}\big)^{n+1} \\ = \left(1+\frac{\tau_1+o(1)}N\right)^{n+1} + \left(1-\frac{\tau_1+o(1)}N\right)^{n+1},
\end{multline*}
and
\begin{multline*}
\big(\Phi^{-1}(y_{N,a})\overline{\Phi(y_{N,b})}\big)^{n+1} + \big(\Phi(y_{N,a})\overline{\Phi^{-1}(y_{N,b})}\big)^{n+1} \\ =  \left(1+\frac{\tau_2+o(1)}N\right)^{n+1} + \left(1-\frac{\tau_2+o(1)}N\right)^{n+1},
\end{multline*}
$\tau_1:=\mathrm i\big(a \mp \overline b\big)$, $\tau_2=\mathrm i\big(a \pm \overline b\big)$ when $\pm\im(a)\im(b)>0$, where $o(1)$ holds locally uniformly on $\C^2$. Hence, we get by \eqref{3-6} that the desired limit is equal to
\begin{multline}
\label{5-1}
\frac1\pi\int_0^1\big(1-(\lambda t)^2\big)\left[ \cos\big(-\mathrm i\tau_1 t\big) - \cos\big(-\mathrm i\tau_2 t\big)\right] \mathrm dt \\ =
\pm \frac2\pi \int_0^1\big(1-(\lambda t)^2\big)\sin(at)\sin\big(\overline b t\big) \mathrm dt,
\end{multline}
for $\pm\im(a)\im(b)>0$. Combining \eqref{5-0} and \eqref{5-1}, we get \eqref{edge}. When either $\im(a)=0$ or $\im(b)=0$, \eqref{edge} can be deduced similarly.

\subsection{Proof of Theorem~\ref{real-edge}}

To prove Theorem~\ref{real-edge}, it will be convenient to set $\tilde J_\nu(z) := (2/z)^\nu J_\nu(z)$, which is always an entire function. It follows from \cite[Eq. (10.9.4)]{DLMF} that
\begin{equation}
\label{5-3}
\tilde J_\nu(z)= \frac1{\sqrt\pi\Gamma(\nu+\frac12)}\int_{-1}^1e^{\mathrm izt}(1-t^2)^{\nu-1/2} \upd t.
\end{equation}

\begin{lemma}
\label{2.9-1}
Let $2(\alpha_1+\alpha_2)\in\N$, $j_1,j_2\in\N$ be fixed, and $p(\cdot)$ be a monic polynomial of degree $d$. Then
\begin{multline}
\label{5-4}
\lim_{N\to\infty}\frac1{N^{d+2\alpha_1+2\alpha_2+1}}K_N^{\alpha_1,\alpha_2,d}\left(1-\frac{a^2}{2N^2},1-\frac{b^2}{2N^2}\right) = \\ = \frac12\frac{\Gamma(\alpha_1+1)}{\Gamma(2\alpha_1+1)}\frac{\Gamma(\alpha_2+1)}{\Gamma(2\alpha_2+1)} \int_0^1 t^{d+2(\alpha_1+\alpha_2)}\tilde J_{\alpha_1}(at)\tilde J_{\alpha_2}(bt)\upd t
\end{multline}
uniformly for $a,b$ on compact subsets of $\C$, where $N=2J$ and
\[
K_N^{\alpha_1,\alpha_2,d}(z,w) := \sum_{j=0}^{J-1}p(2j) C_{2j+j_1}^{(\alpha_1+\frac12)}(z)C_{2j+j_2}^{(\alpha_2+\frac12)}(w).
\]
\end{lemma}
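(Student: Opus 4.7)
The plan is to apply the Mehler--Heine asymptotic for ultraspherical polynomials at the edge $z=1$, reinterpret the resulting expression as a Riemann sum, and pass to the limit. A standard consequence of the Mehler--Heine formula for Jacobi polynomials \cite[Eq.~(18.11.6)]{DLMF}, together with the identity \cite[Eq.~(18.7.1)]{DLMF} relating Gegenbauer and Jacobi polynomials and the asymptotic $(2\lambda)_n/(\lambda+\tfrac12)_n \sim \Gamma(\lambda+\tfrac12)\Gamma(2\lambda)^{-1}n^{\lambda-1/2}$, is
\begin{equation}
\label{MH-edge}
\lim_{n\to\infty} \frac{1}{n^{2\alpha}} C_n^{(\alpha+1/2)}\!\left(1 - \frac{z^2}{2n^2}\right) = \frac{\Gamma(\alpha+1)}{\Gamma(2\alpha+1)}\, \tilde J_\alpha(z),
\end{equation}
uniformly for $z$ in compact subsets of $\C$ (the discrepancy between $1-z^2/(2n^2)$ and $\cos(z/n)$ is $O(n^{-4})$, which combined with the bound $C_n^{(\alpha+1/2)'}=O(n^{2\alpha+2})$ near $1$ makes the induced error $o(n^{2\alpha})$). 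The crucial algebraic observation is that with $n=2j+j_1$ and $z^{(1)}_{N,j}:=a(2j+j_1)/N$ one has \emph{exactly} $1-a^2/(2N^2)=1-(z^{(1)}_{N,j})^2/(2n^2)$, so \eqref{MH-edge} applies directly to $C_{2j+j_1}^{(\alpha_1+1/2)}(1-a^2/(2N^2))$, and analogously with $(\alpha_2,j_2,b)$ and $z^{(2)}_{N,j}=b(2j+j_2)/N$ for the other factor.

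Next, setting $t_j := 2j/N \in [0,1)$ with $\Delta t = 2/N$ and noting that $p$ is monic of degree $d$ while $j_1, j_2$ are fixed, the leading asymptotics give $p(2j)(2j+j_1)^{2\alpha_1}(2j+j_2)^{2\alpha_2} \sim N^{d+2\alpha_1+2\alpha_2}\, t_j^{d+2(\alpha_1+\alpha_2)}$ while $z^{(1)}_{N,j}\to at_j$ and $z^{(2)}_{N,j}\to bt_j$. Substituting \eqref{MH-edge} into the sum defining $K_N^{\alpha_1,\alpha_2,d}$ yields
\begin{align*}
&\frac{K_N^{\alpha_1,\alpha_2,d}\!\left(1-\tfrac{a^2}{2N^2},1-\tfrac{b^2}{2N^2}\right)}{N^{d+2\alpha_1+2\alpha_2+1}} \\
&\qquad \sim \frac{\Gamma(\alpha_1+1)\Gamma(\alpha_2+1)}{\Gamma(2\alpha_1+1)\Gamma(2\alpha_2+1)}\cdot \frac{1}{N}\sum_{j=0}^{J-1} t_j^{d+2(\alpha_1+\alpha_2)}\tilde J_{\alpha_1}(at_j)\tilde J_{\alpha_2}(bt_j).
\end{align*}
Since $\frac{1}{N}\sum_{j} f(t_j) = \frac{1}{2}\sum_{j} f(t_j)\,\Delta t \to \frac{1}{2}\int_0^1 f(t)\,\upd t$ by the standard Riemann-sum convergence for the continuous integrand $f(t)=t^{d+2(\alpha_1+\alpha_2)}\tilde J_{\alpha_1}(at)\tilde J_{\alpha_2}(bt)$, this reproduces the right-hand side of~\eqref{5-4}.

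The main obstacle is making these approximations uniform in both $j$ and $(a,b)$ on compacts. I would split the sum at a threshold $M_N$ with $M_N\to\infty$ and $M_N/N\to 0$. On $\{0\le j\le M_N\}$, the crude bound $|C_n^{(\alpha_i+1/2)}(1-a^2/(2N^2))| = O(n^{2\alpha_i})$ (locally uniform in $a$, following from \eqref{MH-edge} since $an/N$ stays in a compact set) produces a total contribution of $O(M_N^{d+2\alpha_1+2\alpha_2+1}/N^{d+2\alpha_1+2\alpha_2+1})=o(1)$. On $\{M_N<j<J\}$, the asymptotic \eqref{MH-edge} applies uniformly in $j$, so substitution introduces errors of size $o(n^{2\alpha_i})$ per factor, and the Riemann sum converges to the stated integral since $\tilde J_{\alpha_1}$ and $\tilde J_{\alpha_2}$ are entire. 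Uniformity in $(a,b)$ on compact subsets of $\C$ is inherited from the corresponding uniformity in \eqref{MH-edge}, itself a consequence of Vitali's theorem applied to the holomorphic extensions of both sides.
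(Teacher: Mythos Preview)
Your proof is correct and takes a genuinely different route from the paper's. The paper does not invoke Mehler--Heine; instead it uses the Laplace-type integral representation
\[
C_n^{(\alpha+\frac12)}(z) = \frac{1}{\sqrt\pi}\frac{\Gamma(\alpha+1)}{\Gamma(\alpha+\frac12)\Gamma(2\alpha+1)}\frac{\Gamma(n+2\alpha+1)}{\Gamma(n+1)}\int_{-1}^1\big(z+v\sqrt{z^2-1}\big)^n(1-v^2)^{\alpha-1/2}\,\upd v,
\]
expands $\big(z+v\sqrt{z^2-1}\big)^2\big(w+u\sqrt{w^2-1}\big)^2 = 1 + \mathrm i(av+bu+\mathcal O(N^{-1}))/J$ at the edge, and then feeds the resulting sum $\sum_j f_j(1+\eta/J)^j$ into the earlier auxiliary Lemma~\ref{2.5-1}. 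The Bessel functions only appear at the very end, via the integral representation \eqref{5-3} of $\tilde J_\nu$. In effect the paper rederives Mehler--Heine inline, with the uniformity in $j$ and in $(a,b)$ packaged into Lemma~\ref{2.5-1}.

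Your approach is more conceptually transparent: you recognise the Bessel limit immediately through Mehler--Heine, then reduce to a Riemann sum. The splitting at a threshold $M_N$ with $M_N\to\infty$, $M_N/N\to0$ is exactly the device the paper used inside the proof of Lemma~\ref{2.5-1}, so the underlying analytic work is comparable. What your route gains is directness; what the paper's route gains is self-containment (no external citation of Mehler--Heine, and the uniformity issues are handled once in Lemma~\ref{2.5-1} rather than re-argued here). Your remark that $1-a^2/(2N^2)=1-(z^{(1)}_{N,j})^2/(2n^2)$ with $n=2j+j_1$ is the key algebraic identity that makes the Mehler--Heine citation clean; without it one would have to track an additional error term in the argument of $C_n^{(\alpha+1/2)}$.
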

\begin{proof}
It follows from \cite[Eq. (18.10.4) and Table 18.6.1]{DLMF} that
\begin{equation}
\label{5-5}
C_n^{(\alpha+\frac12)}(z) = \frac1{\sqrt\pi}\frac{\Gamma(\alpha+1)}{\Gamma(\alpha+\frac12)\Gamma(2\alpha+1)}\frac{\Gamma(n+2\alpha+1)}{\Gamma(n+1)}\int_{-1}^1\left(z+v\sqrt{z^2-1}\right)^n\upd \mu_\alpha(v),
\end{equation}
for any determination of the square root, where $\upd\mu_\alpha(v):=(1-v^2)^{\alpha-\frac12}\upd v$. If $z=1-a^2/(2N^2)$ and $w=1-b^2/(2N^2)$, then
\[
\left(z+v\sqrt{z^2-1}\right)^2\left(w+u\sqrt{w^2-1}\right)^2 = 1 + \mathrm i\frac{av+bu+\mathcal O\big(N^{-1}\big)}J,
\]
where $\mathcal O\big(N^{-1}\big)$ is uniform with respect to $a,b$ on compact subsets of $\C$ and $t,u\in[-1,1]$, and we do not keep track of the determination of the square roots as it is not important in \eqref{5-5}, (observe also that the substitutions $a\mapsto-a$ and $b\mapsto-b$ do not change either side of \eqref{5-4}). Thus, the limit of the left-hand side of \eqref{5-4} is the same as the limit of
\begin{equation}
\label{5-6}
\frac{C}{\pi\Gamma(\alpha_1+\frac12)\Gamma(\alpha_2+\frac12)}\int_{-1}^1\int_{-1}^1\sum_{j=0}^{J-1}\frac{f_j\left(1+\mathrm i\frac{av+bu+\mathcal O(N^{-1})}J\right)^j}{J^{d+2(\alpha_1+\alpha_2)+1}}\upd\mu_{\alpha_1}(v)\upd\mu_{\alpha_2}(u),
\end{equation}
where  $C:=\frac12\frac{\Gamma(\alpha_1+1)}{\Gamma(2\alpha_1+1)}\frac{\Gamma(\alpha_2+1)}{\Gamma(2\alpha_2+1)}$ and
\[
f_j := \frac{p(2j)}{2^{d+2(\alpha_1+\alpha_2)+1}}\frac{\Gamma(2j+j_1+2\alpha_1+1)}{\Gamma(2j+j_1+1)}\frac{\Gamma(2j+j_2+2\alpha_2+1)}{\Gamma(2j+j_2+1)}.
\] 
Since \( \lim_{j\to\infty} f_j j^{-(d+2(\alpha_1+\alpha_2)+1)}=1 \), it follows from Lemma~\ref{2.5-1} that the limit of \eqref{5-6} is equal to
\[
\frac{C}{\pi\Gamma(\alpha_1+\frac12)\Gamma(\alpha_2+\frac12)}\int_{-1}^1\int_{-1}^1 \int_0^1 t^{d+2(\alpha_1+\alpha_2)} e^{\mathrm i(av+bu)t}\upd t\upd\mu_{\alpha_1}(v)\upd\mu_{\alpha_2}(u).
\]
The claim of the lemma now follows from \eqref{5-3}.
\end{proof}

\begin{lemma}
\label{2.9-2}
Under the conditions of Theorem~\ref{real-edge}, \eqref{edge1} holds.
\end{lemma}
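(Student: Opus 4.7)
The plan is to unfold $\tilde\kappa_{N,s}(z,w)$ via Theorem~\ref{skew-ortho} into four antisymmetrized bilinear sums of ultraspherical polynomials and then apply Lemma~\ref{2.9-1} separately to each, tracking the scaling carefully. Writing $z=y_{N,a}$, $w=y_{N,b}$ with $y_{N,a}=2-a^2/N^2$, so that $y_{N,a}/2=1-a^2/(2N^2)$, and using \eqref{orto-kernel}, \eqref{skew-ortho2}, we have
\begin{multline*}
\tilde\kappa_{N,s}(z,w)=\frac{1}{8}\sum_{j=0}^{J-1}(4j+3)\Bigl(1-\tfrac{(2j+1)^2}{s^2}\Bigr)\Bigl[C_{2j}^{(3/2)}(z/2)C_{2j+1}^{(1/2)}(w/2)-C_{2j}^{(3/2)}(w/2)C_{2j+1}^{(1/2)}(z/2)\Bigr]\\
-\frac{1}{8s^2}\sum_{j=0}^{J-1}(4j+3)\Bigl[C_{2j}^{(3/2)}(z/2)C_{2j+1}^{(3/2)}(w/2)-C_{2j}^{(3/2)}(w/2)C_{2j+1}^{(3/2)}(z/2)\Bigr].
\end{multline*}
The strategy is to split the first bracket into the $1$ and $-(2j+1)^2/s^2$ pieces, giving three sums to which Lemma~\ref{2.9-1} applies with $(\alpha_1,\alpha_2,j_1,j_2)=(1,0,0,1)$ in the first two cases and $(\alpha_1,\alpha_2,j_1,j_2)=(1,1,0,1)$ in the third. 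In each case, the coefficient $4j+3=2(2j+\tfrac32)$ is monic of degree $d=1$ (up to the factor $2$), while $(4j+3)(2j+1)^2$ corresponds to $d=3$.

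For the main piece ($d=1$, $\alpha_1=1$, $\alpha_2=0$), the lemma gives $\frac{1}{N^4}\cdot\frac{1}{4}K_N^{1,0,1}\to\frac14\int_0^1 t^3\tilde J_1(at)\tilde J_0(bt)\,\upd t=\frac{1}{2a}\int_0^1 t^2J_1(at)J_0(bt)\,\upd t$, where I use $\tilde J_1(u)=2J_1(u)/u$. Antisymmetrizing in $a,b$ (from the $z\leftrightarrow w$ pairing) and combining yields $\frac{1}{8ab}\int_0^1 t^2[bJ_1(at)J_0(bt)-aJ_1(bt)J_0(at)]\,\upd t$. For the $-(2j+1)^2/s^2$ correction within this piece ($d=3$), the scaling $N^6/(s^2 N^4)\to\lambda^2$ produces the factor $-(\lambda t)^2$ inside the integrand, combining with the main contribution into $\frac{1}{8ab}\int_0^1 t^2(1-(\lambda t)^2)[bJ_1(at)J_0(bt)-aJ_1(bt)J_0(at)]\,\upd t$, which is precisely $\frac{1}{8ab}\int_0^1 t(1-(\lambda t)^2)\mathbb J_{1,1}(at,bt)\,\upd t$ after invoking the definition of $\mathbb J_{1,1}$.

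The remaining task is to show the second (double-$C^{(3/2)}$) sum vanishes in the limit. Applying Lemma~\ref{2.9-1} with $(\alpha_1,\alpha_2)=(1,1)$, $d=1$, one obtains, after the $N^2/s^2\to\lambda^2$ rescaling, a contribution proportional to $\int_0^1 t^3 J_1(at)J_1(bt)\,\upd t$ from the $(z,w)$ term. However, this integral is symmetric in $a\leftrightarrow b$, so after antisymmetrizing with the $(w,z)$ term one gets identically zero. The desired limit~\eqref{edge1} follows. The main obstacle is purely bookkeeping: verifying the correct normalization constants $\Gamma(\alpha+1)/\Gamma(2\alpha+1)$ emerging from Lemma~\ref{2.9-1}, the powers of $2$ coming from $4j+3=2(2j+3/2)$ and $\tilde J_1(u)=2J_1(u)/u$, and the cancellation from antisymmetry of symmetric integrals.
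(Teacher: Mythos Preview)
Your approach is essentially identical to the paper's: expand $\tilde\kappa_{N,s}$ via Theorem~\ref{skew-ortho}, apply Lemma~\ref{2.9-1} term by term, and note that the $C^{(3/2)}\!\cdot C^{(3/2)}$ contribution is symmetric in $a,b$ and hence vanishes after antisymmetrization. One intermediate constant is off by a factor of $4$: with $\alpha_1=1$, $\alpha_2=0$, $d=1$ the lemma gives $N^{-4}K_N^{1,0,1}\to\tfrac14\int_0^1 t^3\tilde J_1(at)\tilde J_0(bt)\,\upd t$, so the piece $\tfrac14 K_N^{1,0,1}/N^4$ tends to $\tfrac{1}{16}\int_0^1 t^3\tilde J_1(at)\tilde J_0(bt)\,\upd t=\tfrac{1}{8a}\int_0^1 t^2 J_1(at)J_0(bt)\,\upd t$, not $\tfrac14\int$ and $\tfrac{1}{2a}\int$ as you wrote; your final $\tfrac{1}{8ab}$ is nonetheless correct.
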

\begin{proof}
It follows from Theorems~\ref{pfaffian} and~\ref{skew-ortho} that we need to evaluate the limit of
\begin{multline*}
\frac1{4N^4}\sum_{j=0}^{J-1}\left(1-\frac{(2j+1)^2}{s^2}\right)\left(2j+\frac32\right)C_{2j}^{(3/2)}\left(1-\frac{a^2}{2N^2}\right)C_{2j+1}^{(1/2)}\left(1-\frac{b^2}{2N^2}\right) \\ - \frac1{4s^2N^4}\sum_{j=0}^{J-1}\left(2j+\frac32\right)C_{2j}^{(3/2)}\left(1-\frac{a^2}{2N^2}\right)C_{2j+1}^{(3/2)}\left(1-\frac{b^2}{2N^2}\right),
\end{multline*}
which is equal to
\[
\frac1{16}\int_0^1 t^3\big(1-(\lambda t)^2\big)\tilde J_1(at)\tilde J_0(bt)\upd t - \frac{\lambda^2}{32}\int_0^1t^5\tilde J_1(at)\tilde J_1(bt)\upd t
\]
by \eqref{5-4}. By swapping the the roles of $a$ and $b$, we get that the limit of the left-hand side of \eqref{edge1} is equal to
\[
\frac1{16}\int_0^1 t^3\big(1-(\lambda t)^2\big)\big(\tilde J_1(at)\tilde J_0(bt)-(\tilde J_0(at)\tilde J_1(bt)\big)\upd t.
\]
The desired result now follows from the identities $J_1(z)=(z/2)\tilde J_1(z)$ and $J_0(z)=\tilde J_0(z)$.
\end{proof}

\begin{lemma}
\label{2.9-3}
It holds that
\[
\left\{
\begin{array}{lll}
\epsilon(\phi\pi_{2n+1})(x) &=& -\int_2^x\pi_{2n+1}(u)\phi(u)\upd u +\frac2{s^2}, \medskip \\
\epsilon(\phi\pi_{2n})(x) &=& -\int_2^x\pi_{2n}(u)\phi(u)\upd u - \frac{4n+3}8.
\end{array}
\right.
\]
\end{lemma}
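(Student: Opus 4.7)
\bigskip

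\noindent\emph{Proof plan for Lemma~\ref{2.9-3}.} The plan is to start from Lemma~\ref{2.3-1} with the choice $\xi=+1$, so that
\begin{align*}
\epsilon(\phi\pi_{2n+1})(x) &= -\int_2^x\pi_{2n+1}(u)\phi(u)\upd u+\int_2^\infty\pi_{2n+1}(u)\phi(u)\upd u,\\
\epsilon(\phi\pi_{2n})(x) &= -\int_2^x\pi_{2n}(u)\phi(u)\upd u+\int_2^\infty\pi_{2n}(u)\phi(u)\upd u-\tfrac12\Gamma_n(s).
\end{align*}
Thus the task reduces to verifying the two identities
\[
\int_2^\infty\pi_{2n+1}(u)\phi(u)\upd u=\frac{2}{s^2},\qquad \int_2^\infty\pi_{2n}(u)\phi(u)\upd u=\frac12\Gamma_n(s)-\frac{4n+3}{8}.
\]

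The next step is to compute the base integral $\int_2^\infty U_{m-1}(u)\phi(u)\upd u$ explicitly. Substituting $u=\Phi+\Phi^{-1}$ (so $\upd u=(1-\Phi^{-2})\upd\Phi$, $\sqrt{u^2-4}=\Phi-\Phi^{-1}$) and using $\phi(u)=\Phi^{-s}$ for $u>2$ together with \eqref{Cheb2}, one gets $U_{m-1}(u)\phi(u)\upd u=(\Phi^{m-s-1}-\Phi^{-m-s-1})\upd\Phi$, which integrates from $1$ to $\infty$ (valid since $s>N\geq m$) to give
\[
\int_2^\infty U_{m-1}(u)\phi(u)\upd u=\frac{2m}{s^2-m^2}.
\]

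The final step is to plug the Chebysh\"ev expansions from Lemma~\ref{lem:poly-sum} into the integrals and collapse the sums using Lemma~\ref{lem:two-sums}. For the odd-index polynomial, with $m=2i+2$,
\[
\int_2^\infty\pi_{2n+1}(u)\phi(u)\upd u = -\frac1{2\pi}\sum_{i=0}^n(2i+2)\frac{s^2-(2i+2)^2}{s^2}\Gamma_{2n+1,i}\cdot\frac{2(2i+2)}{s^2-(2i+2)^2} = -\frac1{\pi s^2}\sum_{i=0}^n(2i+2)^2\Gamma_{2n+1,i},
\]
which equals $2/s^2$ by the second identity in \eqref{sum3}. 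For the even-index polynomial, with $m=2i+1$,
\[
\int_2^\infty\pi_{2n}(u)\phi(u)\upd u = \frac{4n+3}{4\pi}\sum_{i=0}^n\frac{(2i+1)^2\Gamma_{2n,i}}{s^2-(2i+1)^2}.
\]
Writing $(2i+1)^2/(s^2-(2i+1)^2)=-1+s^2/(s^2-(2i+1)^2)$ splits this into two pieces: the first, by the first identity in \eqref{sum3}, contributes $-(4n+3)/8$; the second, by \eqref{sum1} applied with $a=s/2$, contributes exactly $\tfrac12\Gamma_n(s)$ once one substitutes the closed form of $\Gamma_n(s)$ supplied in Lemma~\ref{2.3-1}.

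No part of this argument is subtle; the only point requiring care is the algebraic bookkeeping that turns the partial-fraction sum for the even case into precisely $\tfrac12\Gamma_n(s)-\tfrac{4n+3}{8}$, which is what forces the constant $-(4n+3)/8$ to cancel the $-\tfrac12\Gamma_n(s)$ leftover from Lemma~\ref{2.3-1}. Once that cancellation is observed, both formulas in the lemma follow at once.
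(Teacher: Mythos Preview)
Your proof is correct. The odd case is handled exactly as in the paper: both compute $\int_2^\infty\pi_{2n+1}\phi$ via the Chebysh\"ev expansion \eqref{skew-ortho1}, the elementary integral $\int_2^\infty U_{m-1}\phi=2m/(s^2-m^2)$, and the second identity in \eqref{sum3}.

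For the even case your route differs slightly from the paper's. You start from Lemma~\ref{2.3-1} (anchored at $+\infty$), compute $\int_2^\infty\pi_{2n}\phi$, split it with the partial fraction $(2i+1)^2/(s^2-(2i+1)^2)=-1+s^2/(s^2-(2i+1)^2)$, and then invoke both \eqref{sum3} and \eqref{sum1} (with $a=s/2$) together with the closed form of $\Gamma_n(s)$ to force the cancellation of $\tfrac12\Gamma_n(s)$. The paper instead starts from the relation in the proof of Lemma~\ref{2.6-3} (anchored at $0$, using that $\pi_{2n}\phi$ is even), which reduces the constant to $\int_0^2\pi_{2n}(u)\,\upd u$; since $\phi\equiv 1$ there and $\int_0^2 U_{2i}=T_{2i+1}(2)/(2i+1)=2/(2i+1)$, only the first identity in \eqref{sum3} is needed and $\Gamma_n(s)$ never appears. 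Your argument is perfectly valid but does a bit more work: the $\tfrac12\Gamma_n(s)$ you produce and then cancel is exactly the ``detour'' the paper avoids by integrating over $[0,2]$ instead of $[2,\infty)$.
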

\begin{proof}
The first relation in the proof of Lemma~\ref{2.6-3} gives us
\[
\left\{
\begin{array}{lll}
\epsilon(\phi\pi_{2n+1})(x) &=& -\int_2^x\pi_{2n+1}(u)\phi(u)\upd u +\int_2^\infty \pi_{2n+1}(u)\phi(u)\upd u, \medskip \\
\epsilon(\phi\pi_{2n})(x) &=& -\int_2^x\pi_{2n}(u)\phi(u)\upd u - \int_0^2\pi_{2n}(u)\phi(u)\upd u.
\end{array}
\right.
\]
The claim of the lemma now follows from \eqref{skew-ortho1}, \eqref{intU}, and \eqref{sum3}.
\end{proof}

\begin{lemma}
\label{2.9-4}
Under conditions of Theorem~\ref{real-edge}, it holds locally uniformly for $u\in\C$ that
\[
\lim_{N\to\infty}\frac1{N^2}\sum_{j=0}^{J-1}\left(\frac{4n+3}{4}\pi_{2j+1}+\frac4{s^2}\pi_{2j}\right)\left(2-\frac{u^2}{N^2}\right) = \frac14\int_0^1t\big(1-(\lambda t)^2\big)J_0(ut)\upd t.
\]
\end{lemma}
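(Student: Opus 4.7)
Substituting the formulas \eqref{skew-ortho2} for $\pi_{2j}$ and $\pi_{2j+1}$ (and reading the coefficient $\tfrac{4n+3}{4}$ in the statement as $\tfrac{4j+3}{4}$) produces the algebraic identity
\begin{equation*}
\tfrac{4j+3}{4}\pi_{2j+1}(z)+\tfrac{4}{s^2}\pi_{2j}(z) = \tfrac{4j+3}{4}\Bigl(1-\tfrac{(2j+1)^2}{s^2}\Bigr)C_{2j+1}^{(1/2)}\bigl(\tfrac{z}{2}\bigr) + \tfrac{4j+3}{4s^2}\Bigl[C_{2j}^{(3/2)}\bigl(\tfrac{z}{2}\bigr)-C_{2j+1}^{(3/2)}\bigl(\tfrac{z}{2}\bigr)\Bigr],
\end{equation*}
which splits the target sum into a $C^{(1/2)}$-piece $S_N^{(1)}$ and a $C^{(3/2)}$-piece $S_N^{(2)}$. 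I then follow the approach used in the proof of Lemma~\ref{2.9-1}: I express each ultraspherical polynomial at $z_N:=1-u^2/(2N^2)$ via the Laplace-type representation \eqref{5-5} against $(1-v^2)^{\alpha-1/2}\,\upd v$, so that the sums become power sums in $w:=z_N+v\sqrt{z_N^2-1}$. A direct expansion gives $w^2=1+\mathrm i vu/J+\mathcal O(N^{-2})$ uniformly in $v\in[-1,1]$ and in $u$ on compact subsets of $\C$ (with $J=N/2$); consequently Lemma~\ref{2.5-1} (in its $\eta+\epsilon_N$ form with $\eta=\mathrm i vu$) yields $J^{-(d+1)}\sum_j f_j w^{2j+r}\to\int_0^1 t^d e^{\mathrm i vut}\,\upd t$ whenever $f_j\sim j^d$ and $r$ is fixed.

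For $S_N^{(1)}$, I would split the coefficient $\tfrac{4j+3}{4}(1-(2j+1)^2/s^2)$ into its leading part $\sim j$ and its $\sim 4\lambda^2 j^3/N^2$ correction. Using Lemma~\ref{2.5-1} with $d=1$ and $d=3$, together with $N^2=4J^2$, yields
\begin{equation*}
S_N^{(1)}\longrightarrow\tfrac1{4\pi}\int_{-1}^{1}(1-v^2)^{-1/2}\int_0^1 t\bigl(1-(\lambda t)^2\bigr)e^{\mathrm i vut}\,\upd t\,\upd v.
\end{equation*}
Interchanging the order of integration and evaluating the inner $v$-integral via \eqref{5-3} at $\nu=0$ (recall $\tilde J_0=J_0$), which gives $\pi^{-1}\int_{-1}^1(1-v^2)^{-1/2}e^{\mathrm i vz}\,\upd v = J_0(z)$, produces exactly the desired right-hand side $\tfrac14\int_0^1 t(1-(\lambda t)^2)J_0(ut)\,\upd t$.

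The main obstacle is to verify that $S_N^{(2)}\to 0$. Applying \eqref{5-5} with $\alpha=1$ yields
\begin{equation*}
C_{2j}^{(3/2)}(z_N)-C_{2j+1}^{(3/2)}(z_N) = \tfrac{2j+2}{\pi}\int_{-1}^{1}\bigl[(2j+1)-(2j+3)w\bigr]w^{2j}(1-v^2)^{1/2}\,\upd v,
\end{equation*}
and writing $w=1+\delta$ with $\delta=\mathrm i vu/N+\mathcal O(N^{-2})$ reduces the bracket to $-2-(2j+3)\delta$. A term-by-term bound at the level of individual polynomials would suggest an $\mathcal O(1)$ contribution from the naive $j^3/s^2\sim 1/N^2$ scaling combined with $J$ terms in the sum; only the structural cancellation between the two $C^{(3/2)}$ terms supplies the extra $1/N$ needed. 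Quantitatively, the $-2$ sub-sum has coefficients of order $j^2$ (total size $\mathcal O(J^3)$) and the $(2j+3)\delta$ sub-sum has coefficients of order $j^3/N$ (also $\mathcal O(J^3)$), so in each case multiplication by the $\tfrac{1}{N^2 s^2}\sim\lambda^2/N^4$ prefactor produces $\mathcal O(1/N)\to 0$. Combining this with the limit of $S_N^{(1)}$ completes the proof.
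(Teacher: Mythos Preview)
Your argument is correct and rests on the same two ingredients the paper uses: the Laplace-type representation \eqref{5-5} and Lemma~\ref{2.5-1}. The paper streamlines matters by first stating the single-factor scaling limit
\[
\lim_{N\to\infty}\frac{1}{N^{d+2\alpha+1}}\sum_{j=0}^{J-1}p(2j)\,C_{2j+m}^{(\alpha+1/2)}\!\left(1-\tfrac{u^2}{2N^2}\right)=\frac12\,\frac{\Gamma(\alpha+1)}{\Gamma(2\alpha+1)}\int_0^1 t^{d+2\alpha}\tilde J_\alpha(ut)\,\upd t
\]
(proved exactly as in Lemma~\ref{2.9-1}) and then applying it term by term to the same three-piece decomposition you wrote down. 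For your $S_N^{(2)}$ this is cleaner than your direct cancellation estimate: since the right-hand side above does not depend on the shift $m$, the $C_{2j}^{(3/2)}$ and $C_{2j+1}^{(3/2)}$ sums have \emph{identical} finite limits, and their difference tends to $0$ automatically. Your hands-on bound via $(2j+1)-(2j+3)w=-2-(2j+3)\delta$ with $\delta=\mathcal O(1/N)$ is also valid, but the ``extra $1/N$'' you extract is not actually needed---the cancellation is already encoded in the $m$-independence of the single-variable limit.
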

\begin{proof}
Repeating the proof of Lemma~\ref{2.9-1}, we can show the following. Let $2\alpha\in\N$, $m\in\N$ be fixed, and $p(\cdot)$ be a monic polynomial of degree $d$. Then
\begin{equation}
\label{single}
\lim_{N\to\infty}\frac1{N^{d+2\alpha+1}} \sum_{j=0}^{J-1}p(2j) C_{2j+m}^{(\alpha+\frac12)}\left(1-\frac{u^2}{2N^2}\right) = \frac12\frac{\Gamma(\alpha+1)}{\Gamma(2\alpha+1)} \int_0^1 t^{d+2\alpha}\tilde J_\alpha(ut)\upd t
\end{equation}
locally uniformly for $u\in\C$. Now, it follows from Theorem~\ref{skew-ortho} that $\frac{4n+3}{4}\pi_{2n+1}(z)+\frac4{s^2}\pi_{2n}(z)$ is equal to
\[
\frac{2j+3/2}{2s^2}\left[C_{2j}^{(3/2)}\left(\frac z2\right) +\big( s^2 - (2n+1)^2 \big) C_{2j+1}^{(1/2)}\left(\frac z2\right) - C_{2j+1}^{(3/2)}\left(\frac z2\right) \right].
\]
Clearly, the claim of the lemma is a consequence of \eqref{single} as $\tilde J_0(z)=J_0(z)$.
\end{proof}

\begin{lemma}
\label{2.9-5}
Under the conditions of Theorem~\ref{real-edge}, limits \eqref{edge2} and \eqref{edge3} hold.
\end{lemma}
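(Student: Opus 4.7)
The approach mirrors that of Lemma~\ref{2.6-5}, but uses Lemma~\ref{2.9-3} in place of Lemma~\ref{2.6-3} so that the structure needed to invoke Lemma~\ref{2.9-4} and \eqref{edge1} becomes transparent. Starting from \eqref{orto-kernel}, \eqref{eq:22}, and \eqref{tildekappa}, one has
\[
\widetilde{\kappa_{N,s}\epsilon}(z,y) = 2\sum_{j=0}^{J-1}\bigl[\pi_{2j}(z)\epsilon(\pi_{2j+1}\phi)(y) - \pi_{2j+1}(z)\epsilon(\pi_{2j}\phi)(y)\bigr].
\]
Set $y_{N,b}:=2-b^2/N^2$. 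For $b\in\R\setminus\{0\}$ and $N$ large, $y_{N,b}\in(-2,2)$ and $\phi\equiv 1$ along the relevant path, so Lemma~\ref{2.9-3} gives $\epsilon(\pi_{2j+1}\phi)(y_{N,b}) = -\int_2^{y_{N,b}}\pi_{2j+1}(u)\upd u + 2/s^2$ and $\epsilon(\pi_{2j}\phi)(y_{N,b}) = -\int_2^{y_{N,b}}\pi_{2j}(u)\upd u - (4j+3)/8$. Substituting these expressions decomposes $\widetilde{\kappa_{N,s}\epsilon}(y_{N,a},y_{N,b})$ into an \emph{integral part} and a \emph{constant part}.

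The constant part collects into $\sum_{j=0}^{J-1}\bigl(\tfrac{4}{s^2}\pi_{2j}(y_{N,a}) + \tfrac{4j+3}{4}\pi_{2j+1}(y_{N,a})\bigr)$, which is exactly the combination whose limit is supplied by Lemma~\ref{2.9-4}, so after division by $N^2$ it converges to $\tfrac{1}{4}\int_0^1 r(1-(\lambda r)^2)J_0(ar)\upd r$. The integral part, by antisymmetry of the summand, equals $-\int_2^{y_{N,b}}\tilde\kappa_{N,s}(y_{N,a},u)\upd u$; the substitution $u=y_{N,t}=2-t^2/N^2$ turns this into $\tfrac{2}{N^2}\int_0^b t\,\tilde\kappa_{N,s}(y_{N,a},y_{N,t})\upd t$, and passing to the limit via \eqref{edge1} together with Fubini produces $\tfrac{1}{4a}\int_0^1 r(1-(\lambda r)^2)\int_0^b\mathbb J_{1,1}(ar,tr)\upd t\,\upd r$. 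Using the primitives $\int_0^b tJ_0(tr)\upd t = (b/r)J_1(br)$ and $\int_0^b J_1(tr)\upd t = (1-J_0(br))/r$, the inner integral reduces to $bJ_1(ar)J_1(br) - aJ_0(ar) + aJ_0(ar)J_0(br)$. Adding the constant-part contribution cancels the stray $-aJ_0(ar)$ term, and the remainder reassembles as $\tfrac{1}{4a}\int_0^1(1-(\lambda r)^2)\mathbb J_{1,2}(ar,br)\upd r$, which is \eqref{edge2}.

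The main technical point is recognizing that the constant residues $2/s^2$ and $-(4j+3)/8$ produced by Lemma~\ref{2.9-3} bundle together precisely as the linear combination treated by Lemma~\ref{2.9-4}. Had we instead started from Lemma~\ref{2.6-3} (as in the bulk proof of Lemma~\ref{2.6-5}), the residues would have combined into $2\sum_j\Delta_j(s)\pi_{2j}(y_{N,a})$, whose summands are individually of order $j^{3/2}$ at the edge and would force a delicate oscillatory-cancellation argument driven by the $(-1)^j$ sign in $\Delta_j$ to establish the correct $O(N^2)$ bound. The Lemma~\ref{2.9-3} route sidesteps this difficulty entirely, replacing it with a direct appeal to Lemma~\ref{2.9-4} and the already-proved \eqref{edge1}.

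Limit \eqref{edge3} follows as a corollary. For $a,b\in\R$ one has $y_{N,a},y_{N,b}\in(-2,2)$, and since $(\epsilon f)'(x)=-f(x)$ on $\R$ and $\epsilon\kappa_{N,s}\epsilon(x,x)=0$,
\[
\epsilon\kappa_{N,s}\epsilon(y_{N,a},y_{N,b}) = \int_{y_{N,a}}^{y_{N,b}}\kappa_{N,s}\epsilon(u,y_{N,b})\upd u = -\frac{2}{N^2}\int_a^b t\,\widetilde{\kappa_{N,s}\epsilon}(y_{N,t},y_{N,b})\upd t,
\]
using $\phi\equiv 1$ on $(-2,2)$ and the substitution $u=y_{N,t}$. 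Passing to the limit via \eqref{edge2}, applying Fubini, and evaluating the inner integral via $\int_a^b tsJ_0(ts)\upd t = bJ_1(bs)-aJ_1(as)$ and $\int_a^b J_1(ts)\upd t = (J_0(as)-J_0(bs))/s$ collapses the iterated integral to $\tfrac{1}{2}\int_0^1 s^{-1}(1-(\lambda s)^2)\mathbb J_{2,2}(as,bs)\upd s$, completing the proof.
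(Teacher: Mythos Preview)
Your proof is correct. For \eqref{edge2} it is essentially the paper's argument: decompose via Lemma~\ref{2.9-3}, feed the constant residues into Lemma~\ref{2.9-4}, and handle the integral piece with \eqref{edge1} after the change of variable $u=2-t^2/N^2$. Your side remark about why Lemma~\ref{2.9-3} is preferable to Lemma~\ref{2.6-3} at the edge is also well taken.

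For \eqref{edge3} there is a small but genuine difference. The paper applies Lemma~\ref{2.9-3} once more to write $\epsilon\kappa_{N,s}\epsilon(x,y)$ directly as a double integral $\int_2^x\int_2^y\kappa_{N,s}(u,v)\,\upd v\,\upd u$ plus a correction term that again matches the combination in Lemma~\ref{2.9-4}; this route also produces the more general formula \eqref{edge3a} valid for $a,b$ with $a^2,b^2\in\R$. You instead reuse the bulk identity $\epsilon\kappa_{N,s}\epsilon(x,y)=\int_x^y\kappa_{N,s}\epsilon(u,y)\,\upd u$ and integrate the already-established \eqref{edge2}. Your route is shorter and entirely sufficient for the lemma as stated; the paper's route buys the extra generality of \eqref{edge3a} (and \eqref{edge2a}) needed for purely imaginary arguments, which the statement of the lemma does not demand.
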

\begin{proof}
It follows from \eqref{eq:22} and Lemma~\ref{2.9-3} that
\[
\kappa_{N,s}\epsilon(z,y) = -\int_2^y\kappa_{N,s}(z,u)\upd u+ \phi(z)\sum_{j=0}^{J-1}\left(\frac{4j+3}{4}\pi_{2j+1}+\frac4{s^2}\pi_{2j}\right)(z).
\]
for $y\in\R$. Hence, Proposition~\ref{prop:phi2}, \eqref{edge1}, Lemma~\ref{2.9-4}, and the change of variable $u\mapsto 2-\frac{u^2}{N^2}$ imply that
\begin{multline}
\label{edge2a}
\lim_{N\to\infty}\frac1{N^2}\widetilde{\kappa_{N,s}\epsilon}(y_{N,a},y_{N,b}) = \\ \frac14\int_0^1t\big(1-(\lambda t)^2\big)\left[\frac1a\int_0^be^{-|\im(u)|/\lambda}\mathbb J_{1,1}(at,ut)\upd u+J_0(at)\right]\upd t
\end{multline}
uniformly for $a\in\C$ and $b^2\in\R$, where $y_{N,a}=2-a^2/N^2$. When $b\in\R$, the expression in square parenthesis becomes
\[
\frac1aJ_1(at)\int_0^b utJ_0(ut)\upd u - tJ_0(at)\int_0^bJ_1(ut)\upd u+ J_0(at) = \frac baJ_1(at)J_1(bt) + J_0(at)J_0(bt)
\]
as $J_0^\prime(z)=-J_1(z)$ and $(zJ_1(z))^\prime = zJ_0(z)$, from which \eqref{edge2} is immediate. Similarly, we get from Lemma~\ref{2.9-3} that \( \epsilon\kappa_{N,s}\epsilon(x,y) \) is equal to
\[
 \int_2^x\int_2^y \kappa_{N,s}(u,v)\upd v\upd u + \left(\int_0^y-\int_0^x\right)\phi(u) \sum_{n=0}^{J-1}\left(\frac{4n+3}{4}\pi_{2n+1}+\frac4{s^2}\pi_{2n}\right)(u) \upd u.
\]
The same change of variables allows us to show that the limit of $\epsilon\kappa_{N,s}\epsilon(y_{N,a},y_{N,b})$ is equal to
\begin{multline}
\label{edge3a}
\frac12\int_0^1 t\big(1-(\lambda t)^2\big)\left[\int_0^a\int_0^b e^{-\frac{|\im(u)|+|\im(v)|}\lambda}\mathbb J_{1,1}(ut,vt) \upd v\upd u \right. \\ \left. -\left(\int_0^b-\int_0^a\right) e^{-\frac{|\im(u)|}\lambda}uJ_0(ut)\upd u\right] \upd t.
\end{multline}
When $a,b$ are real and therefore the integrals are evaluated along the real axis, we get that
\[
\int_0^a\int_0^b \mathbb J_{1,1}(ut,vt) \upd v\upd u + \left(\int_0^b-\int_0^a\right) uJ_0(ut)\upd u = \frac{aJ_1(at)J_0(bt)-bJ_1(bt)J_0(at)}t,
\]
which proves \eqref{edge3}.
\end{proof}

\subsection{Proof of Theorem~\ref{expected}}

\begin{lemma}
\label{2.10-1}
Equation \eqref{Ein} holds.
\end{lemma}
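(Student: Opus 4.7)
The plan is to evaluate the integral in \eqref{EN}. Since $\phi\equiv 1$ on $(-2,2)$, formula \eqref{orto-kernel} gives
\[
E[N_\mathsf{in}] = 2\sum_{j=0}^{J-1}\int_{-2}^2\bigl[\pi_{2j}(x)\,\epsilon(\phi\pi_{2j+1})(x) - \pi_{2j+1}(x)\,\epsilon(\phi\pi_{2j})(x)\bigr]\upd x.
\]
I would substitute \eqref{skew-ortho2} for the skew-orthonormal polynomials together with Lemma~\ref{2.6-3} for the $\epsilon$-transforms on $(-2,2)$, and then change variables $u=x/2$. Each integrand then becomes a finite linear combination of products of Gegenbauer polynomials on $[-1,1]$ of five explicit types: $C_{2j}^{(3/2)}C_{2j+2}^{(1/2)}$, $C_{2j}^{(3/2)}C_{2j}^{(1/2)}$, $(C_{2j+1}^{(1/2)})^2$, $C_{2j+1}^{(1/2)}C_{2j+1}^{(3/2)}$, and the isolated factor $C_{2j}^{(3/2)}$ (the last coming from the additive constant $\Delta_j(s)$ in Lemma~\ref{2.6-3}).

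These integrals are elementary via the Legendre identifications $C_n^{(1/2)}(u)=P_n(u)$ and $C_n^{(3/2)}(u)=P'_{n+1}(u)$, together with the expansion $P'_{n+1}=\sum_{k\le n,\,k\equiv n\,(\bmod 2)}(2k+1)P_k$ and orthogonality $\int_{-1}^1 P_n^2\upd u=2/(2n+1)$. In particular, $\int P'_{2j+1}P_{2j+2}\upd u=0$ by degree considerations, while the remaining four integrals evaluate to $2$, $2/(4j+3)$, $2$, and $2$ respectively. Assembling these and writing $K_{j,1}:=1-(2j+1)^2/s^2$, the $j$-th summand collapses to $2K_{j,1} - \tfrac{4j+3}{s^2} + \tfrac{4j+3}{2}\Delta_j(s)$.

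Summing over $j=0,\ldots,J-1$, the closed-form identities $\sum_{j=0}^{J-1}(2j+1)^2 = \tfrac{J(2J-1)(2J+1)}{3}$ and $\sum_{j=0}^{J-1}(4j+3)=J(2J+1)$ combine (after $N=2J$) to yield exactly $N - \tfrac{N(N+1)(2N+1)}{6s^2}$, which is the right-hand side of \eqref{Ein}. The main obstacle is showing the remainder $\tfrac12\sum_{j=0}^{J-1}(4j+3)\Delta_j(s)$ vanishes identically in $s$ and $J$. This I would handle by splitting $\Delta_j$ into its three constituent pieces per Lemma~\ref{2.6-3} and verifying that each resulting alternating Gamma-quotient sum cancels — either by induction/telescoping in $J$ or by re-interpreting the sums through the power-series representation \eqref{skew-ortho1} of $\pi_{2j+1}$ and the identity \eqref{sum3}. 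This cancellation is the only non-routine step in the argument.
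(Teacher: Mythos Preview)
Your computation of the individual Gegenbauer integrals is correct, and the summand $2K_{j,1}-(4j+3)/s^2+\tfrac{4j+3}{2}\Delta_j(s)$ is right. The point you flag as the ``main obstacle'' is not actually an obstacle: in fact $\Delta_n(s)\equiv 0$ for every $n$. To see this, evaluate the Lemma~\ref{2.6-3} expression at $x=2$: since $C_m^{(1/2)}(1)=P_m(1)=1$ one gets $\epsilon(\phi\pi_{2n+1})(2)=-\tfrac{2}{4n+3}\big[K_{n,2}-K_{n,1}\big]+\Delta_n(s)=\tfrac{2}{s^2}+\Delta_n(s)$, while Lemma~\ref{2.9-3} gives $\epsilon(\phi\pi_{2n+1})(2)=\tfrac{2}{s^2}$ directly. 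Hence $\Delta_n(s)=0$, and your argument is complete with no further cancellation to verify.

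A warning: the explicit closed form displayed for $\Delta_n(s)$ in Lemma~\ref{2.6-3} is in error --- it comes from applying \eqref{intU} with an even index $m$, where the constant $T_m(0)/m$ does not vanish and must be subtracted. If you try to carry out the ``alternating Gamma-quotient'' verification you propose using that stated formula, it will not telescope to zero (you can check already at $J=1$). So do not go that route; use $\Delta_n(s)=0$ instead.

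For comparison, the paper proceeds differently: it integrates by parts, writing the $n$-th summand as a boundary term (evaluated via Lemma~\ref{2.9-3}) plus $I_n=-4\int_{-2}^2\pi_{2n+1}\phi\,\epsilon(\pi_{2n}\phi)$. The advantage is that only $\epsilon(\pi_{2n}\phi)$ enters the bulk integral, and that one has no constant term, so $\Delta_n$ never appears. The paper then expands $I_n$ in Chebysh\"ev polynomials via \eqref{skew-ortho1} and collapses the double sum using the identity \eqref{sum1}, obtaining $I_n=2K_{n,2}$. Your direct Legendre-orthogonality route is more elementary --- it avoids \eqref{sum1} entirely --- but it does require recognizing (or re-deriving) that $\Delta_n(s)=0$.
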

\begin{proof}
It follows from \eqref{EN} and Theorem~\ref{pfaffian} that
\[
E[N_\mathsf{in}] = \sum_{n=0}^{J-1}\int_{-2}^2 2\big[(\pi_{2n}\phi)(x)\epsilon(\pi_{2n+1}\phi)(x) - (\pi_{2n+1}\phi)(x)\epsilon(\pi_{2n}\phi)(x)\big]\upd x.
\]
Since $(\epsilon f)^\prime(x)=-f(x)$ for $x$ real, we can rewrite the above equality as
\[
E[N_\mathsf{in}] = -2 \sum_{n=0}^{J-1}\epsilon(\pi_{2n}\phi)(x)\epsilon(\pi_{2n+1}\phi)(x)\big|_{-2}^2 -4\sum_{n=0}^{J-1}\int_{-2}^2(\pi_{2n+1}\phi)(x)\epsilon(\pi_{2n}\phi)(x)\upd x.
\]
Since $\epsilon(\pi_{2n}\phi)(-2)=-\epsilon(\pi_{2n}\phi)(2)$ and $\epsilon(\pi_{2n+1}\phi)(-2)=\epsilon(\pi_{2n+1}\phi)(2)$ by Lemma~\ref{2.6-3}, it follows from Lemma~\ref{2.9-3} that the first sum above is equal to $J(2J+1)/s^2$. Set
\[
I_n := -4\int_{-2}^2(\pi_{2n+1}\phi)(x)\epsilon(\pi_{2n}\phi)(x)\upd x = 8\int_0^2\pi_{2n+1}(x)\int_0^x\pi_{2n}(u)\upd u\upd x,
\]
where the second equality holds by Lemma~\ref{2.6-3} and since $\phi(x)\equiv1$ on $[-2,2]$ (notice that the integrand in the integral that defines \( I_n \) is an even function by Lemma~\ref{2.6-3}). Thus, it follows from Lemma~\ref{lem:poly-sum} and \eqref{intU} that
\[
I_n = \frac{4n+3}\pi \sum_{i=0}^n \frac{\Gamma(n-i+\frac12)}{\Gamma(n-i+1)}\frac{\Gamma(n+i+\frac32)}{\Gamma(n+i+2)}\int_0^2\pi_{2n+1}(x)T_{2i+1}(x)\upd x.
\]
Using notation \eqref{Gammas} and Lemma~\ref{lem:poly-sum} once more, this time with \eqref{real-part1}, we obtain that
\[
I_n = -\frac{4n+3}{2\pi^2} \sum_{j=0}^n\left(\sum_{i=0}^n \frac{4\Gamma_{2n,i}}{(2j+2)^2-(2i+1)^2}\right) (2j+2)^2\left(1-\frac{(2j+2)^2}{s^2}\right)\Gamma_{2n+1,j}.
\]
Further, we deduce from \eqref{sum1} applied with $a=j+1$ that the inner sum is zero for all $j<n$ and for $j=n$ it is equal to $[\pi^{3/2}\Gamma(2n+2)]/[(2n+2)\Gamma(2n+5/2)]$. Hence, $I_n=2(1-(2n+2)^2/s^2)$ and therefore
\[
E[N_\mathsf{in}] = \frac{N(N+1)}{2s^2} + \sum_{n=0}^{J-1} I_n =  \frac{N(N+1)}{2s^2} + N - \frac{N(N+1)(N+2)}{3s^2}
\]
from which the claim of the lemma follows.
\end{proof}

\begin{lemma}
\label{2.10-2}
Equation \eqref{Eout} holds.
\end{lemma}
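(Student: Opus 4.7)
The plan is to compute $E[N_\mathsf{out}] = 2\int_2^\infty \kappa_{N,s}\epsilon(x,x)\,\upd x$ by adapting the integration-by-parts argument of Lemma~\ref{2.10-1} to the exterior region. The factor of 2 comes from the evenness of $\kappa_{N,s}\epsilon(x,x)$ on $\R\setminus[-2,2]$, which follows from the parities $\pi_{2n}(-z)=\pi_{2n}(z)$, $\pi_{2n+1}(-z)=-\pi_{2n+1}(z)$ (read off from \eqref{skew-ortho2}) together with $\phi(-x)=\phi(x)$ and a change of variable in the defining integral of $\epsilon$. Theorem~\ref{pfaffian} and Lemma~\ref{2.9-3} then unfold the integrand on $(2,\infty)$ into an explicit sum over $n\in\{0,\dots,J-1\}$ of products of $\pi_{2n}\phi$ and $\pi_{2n+1}\phi$ against their piecewise primitives.

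Next I would apply the same integration by parts used in Lemma~\ref{2.10-1}, exploiting $(\epsilon f)'(x)=-f(x)$ on $(2,\infty)$. Since $\epsilon(\pi_{2n+1}\phi)(\infty)=0$ (odd and integrable) and Lemma~\ref{2.9-3} gives $\epsilon(\pi_{2n}\phi)(2^+)=-(4n+3)/8$ and $\epsilon(\pi_{2n+1}\phi)(2^+)=2/s^2$, the boundary contribution telescopes to a clean rational function of $N$ and $s$, and the problem reduces to estimating
\[
\sum_{n=0}^{J-1}\int_2^\infty \pi_{2n+1}(x)\phi(x)\,\epsilon(\pi_{2n}\phi)(x)\,\upd x.
\]
Each integral is expanded via the Chebysh\"ev representation of Lemma~\ref{lem:poly-sum} and the explicit antiderivatives \eqref{intU} (one evaluation for the inner primitive, a second for the outer integral against $U_{2k+1}\phi$); the resulting double sum over Chebysh\"ev indices $i,k$ should collapse via the Pochhammer identities \eqref{sum1}--\eqref{sum3}, exactly as in the proof of Lemma~\ref{2.10-1}.

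The final step is the asymptotic analysis of the outer sum over $n$. After the inner-sum collapse, the polynomial-in-$(N/s)$ pieces are expected to cancel against the boundary contribution isolated in the first step, leaving a residual partial-fraction sum comparable to $\sum_{m=1}^{N}(s-m)^{-1}=\psi(s)-\psi(s-N)$. Elementary estimates bound this residual above and below by constant multiples of $\log(s/(s-N))=-\log(1-Ns^{-1})$, giving \eqref{Eout}.

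The main obstacle is the partial-fraction bookkeeping in the third step: one must verify that the polynomial-in-$(N/s)$ terms really do cancel and identify the surviving simple fractions together with their coefficients. If this direct calculation proves too intricate for the two-sided estimate encoded by $\sim$, a more economical route is available: combining Lemma~\ref{2.10-1} with $N=E[N_\R]+E[N_\C]$ and the skew-orthonormality identity $E[N_\C]=2\sum_{n=0}^{J-1}\la \pi_{2n}\,|\,\pi_{2n+1}\ra_{\C_+}$ yields $E[N_\mathsf{out}]+E[N_\C]=N(N+1)(2N+1)/(6s^2)$. It then suffices to estimate $E[N_\C]$ via the complex moments \eqref{compl-inner} expanded through Lemma~\ref{lem:poly-sum}, and the logarithmic asymptotic emerges as the residual between the cubic polynomial and the leading part of $E[N_\C]$.
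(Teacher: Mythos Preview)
Your overall strategy coincides with the paper's: reduce to $\displaystyle 2\int_2^\infty\kappa_{N,s}\epsilon(x,x)\,\upd x$, integrate by parts using $(\epsilon f)'=-f$ and the boundary values from Lemma~\ref{2.9-3}, and then expand the remaining integral $\int_2^\infty (\pi_{2n+1}\phi)(x)\,\epsilon(\pi_{2n}\phi)(x)\,\upd x$ through the Chebysh\"ev representation \eqref{skew-ortho1} and the antiderivative \eqref{intU}. Up to this point your outline and the paper's proof are the same, and the boundary contribution indeed becomes the clean rational term $-N(N+1)/(2s^2)$.

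The gap is in your third step. The inner double sum does \emph{not} collapse to a partial-fraction expression of the shape $\sum_{m=1}^N (s-m)^{-1}$. What \eqref{sum2}--\eqref{sum3} actually produce, after evaluating $\int_2^\infty \pi_{2n+1}(x)\Phi^{-2s\mp(2i+1)}(x)\,\upd x$, is a double sum of Gamma-function ratios,
\[
\frac1\pi\sum_{n=0}^{J-1}(4n+3)\sum_{i=0}^n (2i+1)\,\Gamma_{2n,i}\big(\Gamma^{s,n}_{-i}-\Gamma^{s,n}_{i+1}\big),
\qquad
\Gamma^{s,n}_a:=\frac{(3s+2a-1)(s+a-\tfrac12)}{2s^2}\,
\frac{\Gamma(s+n+a)\Gamma(s-n+a-\tfrac32)}{\Gamma(s+n+a+\tfrac32)\Gamma(s-n+a)},
\]
and no further algebraic cancellation occurs. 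The paper handles this by (i) Gautschi's inequality for ratios of Gamma functions, which shows the $\Gamma^{s,n}_{i+1}$ terms are $O(s^{-3})$ and hence contribute $O(1)$ after both summations; (ii) the two-sided estimates $\Gamma^{s,n}_{-i}\sim\big(s(s-n-i)\big)^{-3/2}$ and $\Gamma_{2n,i}\sim\big(n(n-i)\big)^{-1/2}$ for $0\le i\le n-1$; and (iii) an integral comparison $\sum_{i=1}^n i^{-1/2}(s-2n+i)^{-3/2}\sim (s-2n)^{-1}\sqrt{n/s}$, which feeds into the outer sum to give $\sum_n n^2 s^{-2}(s-2n)^{-1}\sim -\log(1-N/s)$. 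None of these estimation tools appear in your plan, and without them the two-sided bound encoded by $\sim$ cannot be closed.

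Your alternative route through $E[N_\C]$ does not bypass the difficulty: since $E[N_\mathsf{out}]$ is only of logarithmic size while $N(N+1)(2N+1)/(6s^2)$ and $E[N_\C]$ are each of order $N$, you would need to compute $E[N_\C]$ with an additive error $o(1)$ in the regime $s-N\to c<\infty$, which requires exactly the same Gamma-ratio asymptotics as above.
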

\begin{proof}
As in Lemma~\ref{2.10-1}, we have that
\[
E[N_\mathsf{out}] = \sum_{n=0}^{J-1}\int_2^\infty 4\big[(\pi_{2n}\phi)(x)\epsilon(\pi_{2n+1}\phi)(x) - (\pi_{2n+1}\phi)(x)\epsilon(\pi_{2n}\phi)(x)\big]\upd x,
\]
where we used Lemma~\ref{2.6-3} to deduce that the integrand is an even function. Since \( \pi_{2n}\phi \) is an even function, it follows from the definition of the \( \epsilon \)-operator in \eqref{eq:22} that
\[
\epsilon(\pi_{2n}\phi)(x) = -\int_0^x (\pi_{2n}\phi)(u)\upd u,  \quad x\geq 0.
\]
Therefore, integration by parts, the above identity, Lemma~\ref{2.9-3}, and Lemma~\ref{2.3-1} yield that
\[
E[N_\mathsf{out}] = -\frac{N(N+1)}{2s^2} + \sum_{n=0}^{J-1}8\int_2^\infty\int_0^x (\pi_{2n+1}\phi)(x)(\pi_{2n}\phi)(u)\upd u\upd x.
\]
We get from \eqref{skew-ortho1}, \eqref{Cheb2}, \eqref{sum2}, and \eqref{sum3} that \( \displaystyle \frac1{s+m}\int_2^\infty\frac{\pi_{2n+1}(x)}{\Phi^{2s+m}(x)}\upd x \) is equal to
\begin{eqnarray*}
 & & -\frac1\pi\sum_{i=0}^n\frac{(2i+2)^2}{s^2(s+m)}\frac{s^2-(2i+2)^2}{(2s+m)^2-(2i+2)^2}\Gamma_{2n+1,i}  \\
&=& \frac2{s^2(s+m)} + \frac{s^2-(2s+m)^2}{\pi s^2(s+m)}\sum_{i=0}^n\frac{(2i+2)^2\Gamma_{2n+1,i}}{(2i+2)^2-(2s+m)^2} \\
& = &\frac2{s^2(s+m)} - \frac{(3s+m)(2s+m)}{4s^2}\frac{\Gamma(s+n+\frac{m+1}2)}{\Gamma(s+n+\frac{m+4}2)}\frac{\Gamma(s-n+\frac{m-2}2)}{\Gamma(s-n+\frac{m+1}2)}.
\end{eqnarray*}
Recall that \( \int_2^\infty\pi_{2n+1}\phi =2/s^2 \), see Lemma~\ref{2.9-3}. Denoting by \( \Gamma^{s,n}_{\frac{m+1}2} \) the last summand  in the equation above, we deduce from \eqref{intU} that
\[
8\int_2^\infty(\pi_{2n+1}\phi)(x)\int_0^x (U_{2i}\phi)(u)\upd u\upd x = \frac{32}{(2i+1)s^2} + 8\big(\Gamma^{s,n}_{-i}-\Gamma^{s,n}_{i+1}\big).
\]
Then it follows from Lemma~\ref{lem:poly-sum} and \eqref{sum3} that
\[
E[N_\mathsf{out}] = \frac{N(N+1)}{2s^2} + \frac1\pi \sum_{n=0}^{J-1}(4n+3)\sum_{i=0}^n(2i+1)\Gamma_{2n,i}\big(\Gamma^{s,n}_{-i}-\Gamma^{s,n}_{i+1}\big).
\]
By the Gautschi's inequality for the ratio of Gamma functions \cite[Eq.~(5.6.4)]{DLMF} and straightforward estimates using the fact that $n\leq J-1<s/2$, it holds that $\Gamma^{s,n}_{i+1} \lesssim 1/s^3$, where $\lesssim$ means $\leq$ with an absolute constant. Therefore, \eqref{sum3} and simple upper bounds yield that
\begin{equation}
\label{thelastone}
E[N_\mathsf{out}] = \mathcal O(1) + \frac1\pi \sum_{n=0}^{J-1}(4n+3)\sum_{i=0}^{n-1}(2i+1)\Gamma_{2n,i}\Gamma^{s,n}_{-i},
\end{equation}
where similar estimates allowed us to change the index of summation from \( n \) to \( n-1 \) in the inner sum. Using Gautschi's inequality and straightforward estimates once more, one can verify that \( \Gamma^{s,n}_{-i} \sim (s(s-n-i))^{-3/2} \) and that \( \Gamma_{2n,i} \sim (n(n-i))^{-1/2}\) when \( 0\leq i\leq n-1 \). Observe further that
\begin{multline*}
\sum_{i=1}^n \frac1{\sqrt{i}(s-2n+i)^{3/2}} \sim \int_0^n\frac{\upd x}{\sqrt x(s-2n+x)^{3/2}} \\ = \left.\frac{2}{s-2n}\sqrt{\frac x {s-2n+x}}\right|_0^n \sim \frac1{s-2n}\sqrt{\frac ns}.
\end{multline*}
Thus, the inner sum in \eqref{thelastone} then can be estimated as
\[
\frac1{s^{3/2}}\frac1{\sqrt n}\sum_{i=1}^n \frac{2(n-i)+1}{\sqrt{i}(s-2n+i)^{3/2}} \sim \frac n{s^2}\frac1{s-2n}.
\]
After plugging the above expression into \eqref{thelastone}, \eqref{Eout} easily follows.
\end{proof}

\section*{Acknowledgments}

Part of this research project was completed at the American Institute of Mathematics. The research of the first author was supported in part by a grant from the Simons Foundation, CGM-315685. The research of the second author was supported in part by a grant from the Simons Foundation, CGM-354538.

\end{document}